\numberwithin{equation}{subsection}
\DeclarePairedDelimiter\abs{\lvert\hspace{0.1ex}}{\rvert}
\theoremstyle{plain}
\newtheorem{thm}[equation]{Theorem}
\newtheorem{prop}[equation]{Proposition}
\newtheorem{lem}[equation]{Lemma} 
\newtheorem{cor}[equation]{Corollary}
\newtheorem*{cor*}{Corollary}
\newtheorem*{prob*}{Problem}
\newtheorem*{thm*}{Theorem}
\newtheorem*{thma*}{Theorem A}
\newtheorem*{thmb*}{Theorem B}
\theoremstyle{remark}
\newtheorem{exm}[equation]{Example}
\newtheorem{rmk}[equation]{Remark}
\theoremstyle{definition}
\newtheorem{defn}[equation]{Definition}
\newenvironment{enumalph}
{\begin{enumerate}}
{\end{enumerate}}
\DeclareMathOperator{\Gal}{Gal}
\DeclareMathOperator{\Nm}{Nm}
\DeclareMathOperator{\Repart}{Re}
\DeclareMathOperator{\Spec}{Spec}
\DeclareMathOperator{\Gm}{\mathbb{G}_m}
\newcommand{\dD}{\mathrm{d}}
\newcommand{\aunder}{\pmb{a}}
\newcommand{\bunder}{\pmb{b}}
\newcommand{\defi}[1]{\emph{\textsf{#1}}} 				
\newcommand{\A}{\mathbb A}
\newcommand{\C}{\mathbb C}
\newcommand{\F}{\mathbb F}
\newcommand{\PP}{\mathbb P}
\newcommand{\Q}{\mathbb Q}
\newcommand{\Z}{\mathbb Z}
\newcommand{\m}{\mu}
\newcommand{\eps}{\varepsilon}
\newcommand{\qstroke}{%
  \text{\ooalign{\hidewidth\raisebox{-0.1ex}{-\kern-.25em}\hidewidth\cr$q$\cr}\hspace{0.2ex}}%
}
\newcommand{\qq}{q^\times}
\newcommand{\ZZ}{\mathbb Z}
\newcommand{\psmod}[1]{~(\textup{\text{mod}}~{#1})}
\newcommand{\frakp}{\mathfrak{p}}
\begin{document} 

\title{Hypergeometric motives from Euler integral representations}

\author{Tyler L. Kelly}
 \address{School of Mathematical Sciences, Queen Mary University of London, 327 Mile End Rd, London E1 4NS, UK}
\email{t.l.kelly@qmul.ac.uk}

\author{John Voight}
\address{Department of Mathematics, Dartmouth College, 6188 Kemeny Hall, Hanover, NH 03755, USA; School of Mathematics and Statistics, Carslaw Building (F07), University of Sydney, NSW 2006
Australia}
\email{jvoight@gmail.com}

\date{\today}
\setcounter{tocdepth}{1}

\begin{abstract}
We revisit certain one-parameter families of affine covers arising naturally from Euler's integral representation of hypergeometric functions. We introduce a partial compactification of this family. We show that the zeta function of the fibers in the family can be written as an explicit product of $L$-series attached to nondegenerate hypergeometric motives and zeta functions of tori, twisted by Hecke Grossencharacters.  This permits a combinatorial algorithm for computing the Hodge numbers of the family.
\end{abstract}

\maketitle

\tableofcontents

\section{Introduction}

\subsection{Motivation}

The world of hypergeometric functions invites a rich interplay between complex analysis, algebraic geometry, number theory, and physics guided by the hypergeometric differential equation.  The simplest version of this is found in the Legendre family of elliptic curves, defined by
\[ y^2 = x(x-1)(x-t). \]
The period associated to this family can be computed to be hypergeometric, following Euler~\cite{Eul48}. Two centuries later, Igusa found an arithmetic analogue to Euler's integral formula: the point counts on the Legendre family of elliptic curves can be related to a (truncated) hypergeometric equation~\cite{Igusa}.  Starting in the 1980s, Katz gave a vast generalization \cite{Katz96} in his theory of motives attached to rigid local systems.  Since then, the literature has proposed many families of varieties that exemplify hypergeometric motives  in varying generality: see Roberts--Rodriguez-Villegas \cite[\textsection 3]{RRV} for an overview and the recent monograph by Fuselier--Long--Ramakrishna--Swisher--Tu \cite{FLRST} for a history and further references.  

We focus on a symmetrized version of a generalization of the Legendre family above.  Let $n,m \geq 1$ and $a_1,\dots,a_n,b_1,\dots,b_n \in \Z$.  Inspired by Euler's formula, we start with the family of affine cyclic covers over $\Z[1/m]$
\begin{equation} \label{eqn:thefam-intro}
\begin{aligned}
Y = Y_{\aunder,\bunder,m} \colon y^m &= f_{\aunder,\bunder}(x)  \colonequals \prod_{i=1}^{n} (-x_i)^{a_i} (1-x_i)^{b_i-a_i} \\
1 &= tx_1 x_2 \cdots x_n \\
0,1 &\neq x_1,x_2,\dots,x_n
\end{aligned}
\end{equation}
in the parameter $t$ for $\PP^1 \smallsetminus \{0,1,\infty\}$. We note that our generalization has introduced a new sign. This choice of sign yields substantial streamlining; its absence already shows up as an obtrusive quadratic character when counting points on the Legendre family.  

The fibers $Y_t$ of this family are smooth of dimension $n-1$.  When $a_i-b_j \not \in m\Z$ for all $i,j=1,\dots,n$, Euler's integral formula directly implies at least one period of $Y$ is a hypergeometric function in the parameter $t$, namely $F(a_1/m,\dots,a_n/m; b_1/m,\dots,b_n/m;t)$ (\Cref{lem:eulerperiod}), a feature not as apparent in other hypergeometric families.  Other periods, however, may be less transparent.

We turn to the arithmetic story for $Y$ and computing $\#Y_t(\F_q)$. To do so, we first look at defining an analogue of hypergeometric functions over finite fields. The literature contains multiple definitions and normalizations~\cite{Greene, DM, Katz, BCM, FLRST}, each having their own strengths and contexts.
In particular, Fuselier--Long--Ramakrishna--Swisher--Tu \emph{define} a new finite field analogue of the hypergeometric function in a recursive way to count the number of $\F_q$-rational points in the affine variety defined by the first two equations in~\eqref{eqn:thefam-intro} directly \cite[Proposition 4.2]{FLRST}. The other definitions of hypergeometric functions differ, but under a nondegeneracy condition (namely, $\gcd(m,b_i-a_i)=1$ for all $i$), the hypergeometric function in \cite{FLRST} coincides with that of, say \cite{DM}.

However, the interpretation of this formula can run into problems when there are degeneracies. This happens more than one expects, as degenerate hypergeometric parameters arise when $a_i - b_i \in d\Z$ for some $d \mid m$. In these cases, the formula in \cite{FLRST} uses their recursive formulae with degenerate parameters. In this paper, we introduce a new model $X$ that is a partial compactification of $Y$ (see \textsection\ref{subsec: Partial Compactification}) whose motive avoids degenerate hypergeometric motives. Indeed, its $L$-series is an explicit product of $L$-series attached to nondegenerate hypergeometric motives and zeta functions of tori twisted by a Hecke Grossencharacter. 
In turn, the mixed Hodge numbers for $X$ can be computed directly from this motivic perspective~\cite{CortiGolyshev, Fedorov, RodVill}.

\subsection{Results}

Let $t \in \PP^1(\Q) \smallsetminus \{0,1,\infty\}$ and write $S$ for the set of primes dividing $m$ together with the primes dividing the numerator or denominator of $t$ or $t-1$.  Write $Y_t \colonequals Y_{\aunder,\bunder,t}$ for the fiber over $t$ in the family \eqref{eqn:thefam-intro} over $\Spec \Z[S^{-1}]$ (inverting the primes in $S$).  For $p \not \in S$, write $Y_{t,\F_p}$ for the base change to $\F_p$ and take 
 \begin{equation} 
 Z(Y_{t,\F_p},T) \colonequals \exp\left(\sum_{r=1}^{\infty} \#Y_t(\F_{p^r}) \frac{T^r}{r}\right) \in (1+T\Z[[T]]) \cap \Q(T).
 \end{equation}
Let
 \begin{equation} 
 \zeta_S(Y_t,s) \colonequals \prod_{p \not\in S} Z(Y_{t,\F_p},p^{-s})^{-1}
 \end{equation}
be the zeta function attached to $Y_t$; then $\zeta_S(Y_t,s)$ is convergent in a right half-plane for $s \in \C$ (e.g.\ Serre \cite[\S 1.5]{Serre:nxp}).  

Roughly speaking, our main result will show that there is a partial compactification $X_t \supset Y_t$ whose zeta function, similarly defined, is an explicit product of $L$-series attached to nondegenerate hypergeometric parameters and twists of zeta functions of tori by Hecke Grossencharacters.

We proceed by making this precise by introducing our notation.  Let $\pmb{\alpha}=(\alpha_1,\dots,\alpha_n) \in (\Q/\Z)^n$ and $\pmb{\beta}=(\beta_j)_j \in (\Q/\Z)^n$ be \defi{hypergeometric parameters}.  We say that $\alpha_i$ or $\beta_i$ is a \defi{degenerate parameter} if $\beta_i-\alpha_i \in \Z$. The hypergeometric parameters $\pmb \alpha, \pmb \beta$ are \defi{degenerate} if there exists a degenerate parameter, and \defi{nondegenerate} otherwise.  Finally, we say the parameters are \defi{isotypically degenerate} for $\gamma$ if the set of degenerate parameters in $\Q/\Z$ is $\{\gamma\}$, and we let $e(\pmb{\alpha},\pmb{\beta})$ be its multiplicity in $\pmb \alpha$.  (See \Cref{exm:isotypic}.)

Suppose that $\pmb{\alpha},\pmb{\beta}$ are nondegenerate.  Let $m \in \Z_{\geq 1}$ be minimal such that $m\pmb{\alpha},m\pmb{\beta} \subset \Z$.  In \cref{sec:hypgem}, we recall the definition of the \emph{period-normalized} hypergeometric $L$-series 
\begin{equation}
L_S(H(\pmb{\alpha},\pmb{\beta},t), \Q(\zeta_m), s),
\end{equation}
defined by an Euler product and convergent in a right-half plane.  We note that this $L$-series depends on the ordering of the parameters, but compensates by being invariant under shifts (\Cref{rmk:whyperiodtwist}).  Based on deep results of Katz, we explain that the period-normalized hypergeometric $L$-series for nondegenerate parameters has degree $n\phi(m)$ over $\Q$ (\Cref{Katzthm}).  

The variety $Y_{\aunder,\bunder,m}$ is a branched cover of the affine variety $U$ defined by $1=tx_1\cdots x_n$ and $0,1 \neq x_1,\dots,x_n$.  We partially compactify to $V \supseteq U$ defined by $1=tx_1\cdots x_n$ and $0 \neq x_1,\dots,x_n$ to provide a smooth branched cover of a toric hypersurface, as follows.  

\begin{thm} \label{thm:mainthm}
There exists a partial $\mu_m$-equivariant compactification $X_{\aunder,\bunder,m} \supseteq Y_{\aunder,\bunder,m}$ over $V \supseteq U$ such that 
for all $t \in \PP^1(\Q) \smallsetminus \{0,1,\infty\}$, 
\[ \zeta_S(X_{\aunder,\bunder,m,t},s) = \prod_{d \mid m} 
\begin{cases}
L_S(H(\aunder/d,\bunder/d,t),\Q(\zeta_d),s), & \text{if $\aunder/d,\bunder/d$ is nondegenerate;} \\
\zeta_S((\Gm)^{e-1}, \Q(\zeta_d), s, \psi_{\aunder,\bunder,d,t})^{(-1)^{n-e}}, & \begin{aligned}
    &\parbox[t]{0.35\linewidth}{if $\aunder/d,\bunder/d$ is isotypically degenerate, where $e=e(\aunder/d,\bunder/d)$;}
  \end{aligned} \\
1, & \text{else};
\end{cases} \]
where $\psi_{\aunder,\bunder,d,t}$ is an explicit Hecke Grossencharacter.
\end{thm}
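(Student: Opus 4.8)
The plan is to construct $X_{\aunder,\bunder,m}$ as an explicit cyclic cover of the smooth torus $V$ and then compute $Z(X_{t,\F_p},T)$ by decomposing the cover into Kummer sheaves and using the toric stratification of $V$.

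\emph{Construction of $X$.} Writing $c_i\colonequals b_i-a_i$ and $D_i\colonequals\{x_i=1\}\cap V$, the coordinates $(x_1,\dots,x_{n-1})$ identify $V$ (for fixed $t$) with the torus $(\Gm)^{n-1}$, and $D_1,\dots,D_n$ form a strict normal crossings divisor on it with $\operatorname{div}(f_{\aunder,\bunder})=\sum_i c_i D_i$. I would take $X=X_{\aunder,\bunder,m}$ to be the normalization of $V$ in the Kummer extension $\F(V)(y)$ with $y^m=f_{\aunder,\bunder}$; it carries the residual $\mu_m$-action $y\mapsto\zeta y$, restricts to $Y$ over $U=V\smallsetminus\bigcup_i D_i$, and the strict-normal-crossings structure of $\sum_i c_i D_i$ controls its singularities (an equivariant toric modification over the deepest strata, if needed, does not affect the isotypic computation below). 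It is not proper, as we do not compactify in the $x_i\to 0$ or $t\to 0,1,\infty$ directions. This establishes the first assertion of the theorem.

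\emph{Reduction to a Kummer sheaf on a partially open torus.} Fix $t$ and $p\notin S$; it suffices to understand $H^\bullet_c(X_{t,\overline{\F}_p},\overline{\Q}_\ell)$ with its $\Frob_p$- and $\mu_m$-actions. Decompose into isotypic parts $H^\bullet_c(X_t)_\chi$ over characters $\chi\colon\mu_m\to\overline{\Q}_\ell^\times$ and group $\chi$ by its order $d\mid m$. The point is that the Kummer sheaf $\mathcal{L}_\chi(f_{\aunder,\bunder})=\bigotimes_i \mathcal{L}_\chi((-x_i)^{a_i})\otimes\mathcal{L}_{\chi^{c_i}}(1-x_i)$ on $U$ extends to a lisse rank-one sheaf $\widetilde{\mathcal{L}}_\chi$ exactly across those $D_i$ with $d\mid c_i$ (since $\chi^{c_i}$ is then trivial), while over the remaining faces the fibres of $X_t$ form a single $\mu_m$-orbit on which $\chi$ restricts nontrivially to the inertia, carrying no $\chi$-isotypic cohomology; so $(f_*\overline{\Q}_\ell)_\chi=j_{d!}\widetilde{\mathcal{L}}_\chi$ and $H^\bullet_c(X_t)_\chi=H^\bullet_c(\mathcal{T}_d,\widetilde{\mathcal{L}}_\chi)$ with $\mathcal{T}_d\colonequals V_t\smallsetminus\bigcup_{d\nmid c_i}D_i$, the complement of a toric coordinate subarrangement in $(\Gm)^{n-1}$.

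\emph{The trichotomy and assembly.} Let $D\colonequals\{i:d\mid c_i\}$. Projecting away the coordinates indexed by $D$ exhibits $\mathcal{T}_d$ as a torus bundle over $(\Gm)^{\{i:d\nmid c_i\}}\smallsetminus\bigcup\{x_i=1\}$ with fibre $(\Gm)^{\lvert D\rvert-1}$, and $\widetilde{\mathcal{L}}_\chi$ restricts on each fibre to the Kummer sheaf $\mathcal{L}_\chi\big(\prod_{i\in D}x_i^{a_i}\big)\big|_{\prod x_i=\mathrm{const}}$. If $D=\emptyset$ (nondegenerate $\aunder/d,\bunder/d$) then $\mathcal{T}_d=U_t$ and $\widetilde{\mathcal{L}}_\chi$ is the hypergeometric sheaf, whose cohomology assembles over the order-$d$ characters into the degree-$n\phi(d)$ motive underlying $L_S(H(\aunder/d,\bunder/d,t),\Q(\zeta_d),s)$ by Katz's theory (\Cref{Katzthm}). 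If the $a_i$ with $i\in D$ are all congruent mod $d$ (isotypically degenerate for $\gamma$, with $e=e(\aunder/d,\bunder/d)=\lvert D\rvert$), the fibrewise sheaf is geometrically trivial, so $R\pi_!\widetilde{\mathcal{L}}_\chi\cong H^\bullet_c((\Gm)^{e-1})\otimes(\text{rank-one sheaf on the base})$, the latter a product over $i\notin D$ of Jacobi-type $H^1_c$'s on $\PP^1\smallsetminus\{0,1,\infty\}$ together with the sign- and $t$-twist from $\prod_{i\in D}\chi((-1)^{a_i})$ and $\chi^{a}(1/t)$; over all order-$d$ characters this assembles into the Hecke Grossencharacter $\psi_{\aunder,\bunder,d,t}$ of $\Q(\zeta_d)$ (a product of Jacobi-sum Grossencharacters times a finite-order character ramified only in $S$), and the degree shift by $\lvert\{i:d\nmid c_i\}\rvert=n-e$ produces the exponent $(-1)^{n-e}$. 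If two distinct values occur among the $a_i/d$ with $i\in D$, the fibrewise sheaf is geometrically nontrivial, so $R\pi_!\widetilde{\mathcal{L}}_\chi=0$ and the contribution is the trivial factor $1$. Finally, taking $\prod_{p\notin S}$ and using that $\Frob_p$ permutes order-$d$ characters via $\chi\mapsto\chi^p$, whose orbits recover the primes of $\Q(\zeta_d)$ above $p$, the $d$-block of $\prod_p Z(X_{t,\F_p},p^{-s})^{-1}$ becomes precisely the asserted $\Q(\zeta_d)$-factor; the period-normalization built into the hypergeometric $L$-series absorbs the dependence on the ordering and shifts of the parameters (\Cref{rmk:whyperiodtwist}).

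\emph{Main obstacle.} The heart of the argument is the cohomological computation of $H^\bullet_c(\mathcal{T}_d,\widetilde{\mathcal{L}}_\chi)$ in the degenerate cases: setting up the torus fibration correctly, identifying the fibrewise Kummer sheaf and hence the collapse to a twisted $(\Gm)^{e-1}$ (resp.\ to zero), and --- most delicately --- tracking every sign $(-1)^{a_i}$ and the $t$-twist through the descent from $\Q(\zeta_d)$ to $\Q$ so as to pin down $\psi_{\aunder,\bunder,d,t}$ explicitly. Checking smoothness of a suitable model of $X$, and the weight/parity bookkeeping behind the exponent $(-1)^{n-e}$, are comparatively routine.
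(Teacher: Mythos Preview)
Your outline is essentially sound, but it takes a genuinely different route from the paper.  The paper never touches $\ell$-adic sheaves or cohomology: it builds $X$ by the explicit system of equations \eqref{eqn:alleqns} (not as a normalization), reads off the strata over $\{x_i=1\Leftrightarrow i\notin I\}$ directly (\Cref{lem:xi1}), and then does everything by \emph{elementary character sums}.  The key technical step is \Cref{hypergeometric main}: for each $d\mid m$ one sums the twisted point counts $P'_{\aunder,\bunder,d,I,t}(\F_q)$ over all $I\supseteq I_d$, and a M\"obius-like binomial cancellation (expanding $\prod((-1)+1)$ versus $\prod((\qq-1)+1)$) produces exactly your trichotomy.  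The nondegenerate case is handled by a new direct (non-inductive) Jacobi-sum identity (\Cref{main hypergeometric formula}).

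Your sheaf-theoretic approach is the natural conceptual explanation of that cancellation: the vanishing in case (c) is ``the fibrewise Kummer sheaf is geometrically nontrivial on the $(\Gm)^{e-1}$ fibre, so $R\pi_!=0$'', and the $(\qq)^{e-1}$ in case (b) is the cohomology of that fibre.  What you gain is a uniform geometric picture and a shorter path to the degree shift $(-1)^{n-e}$; what the paper gains is that it is completely elementary (no appeal to purity, Leray, or the internals of Katz's construction beyond the statement of \Cref{Katzthm}) and it pins down $\psi_{\aunder,\bunder,d,t}$ explicitly by the formula \eqref{eqn:explicithypergross}, whereas in your sketch ``tracking every sign and $t$-twist'' is exactly the point you flag as delicate.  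One caution: your claim that $(f_*\overline{\Q}_\ell)_\chi=j_{d!}\widetilde{\mathcal{L}}_\chi$ requires knowing the fibre of the normalization over each deep stratum, which is not entirely innocent; the paper's explicit model sidesteps this by making the stratification and the branch degrees visible in coordinates (\Cref{lem:xi1}).
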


\Cref{thm:mainthm} describes the full (mixed) motive of the family of varieties whose periods arise from Euler's integral formula.  The Hecke Grossencharacters arising in \Cref{thm:mainthm} are described in \eqref{eqn:explicithypergross}; we briefly recall in \cref{sec:gross} how Hecke Grossencharacters may be obtained from Jacobi sums (after Weil and Anderson).

If one tries to extend the hypersurface defined by $y^m=f_{\pmb a, \pmb b}(x)$ across $V$ without modification, then the variety is badly singular. We require a more elaborate partial compactification to obtain \Cref{thm:mainthm}, see \eqref{eqn:alleqns}.  In a nutshell, we organize the factors of $f(x)$ according to the greatest common divisor of their multiplicity with $m$, and show that these glue together appropriately (\Cref{prop:openimmers}).  

As the Hodge numbers of nondegenerate hypergeometric motives and Hecke Grossencharacters are known, as a corollary we conclude that the Hodge numbers of the family $X_t$ are effectively computable, in particular the degrees of the $L$-series appearing in $\zeta(X_t,s)$.  Indeed, for the nondegenerate case, one can find the Hodge numbers associated to the hypergeometric motive associated to a BCM hypergeometric function, following the zigzag procedure \cite{RRV}.  In the isotypically degenerate case, we obtain a torus factor twisted by a Hecke Grossencharacter, which can then be computed \cite{Watkins:comput}.

Our method of proof uses character sums.  In the nondegenerate case, we give a new non-inductive proof (\Cref{main hypergeometric formula}).  But the heart of our work is to show a certain \emph{M\"obius-like cancellation}: terms that show up above fibers where $x_i=1$ in the degenerate case either line up when isotypically degenerate and cancel otherwise (\Cref{hypergeometric main}).

\subsection{Precise comparison to previous results} \label{sec:1compare}
There is a rich literature of where hypergeometric functions over finite fields have been shown to relate to rational point counts (see, e.g., the first paragraph of \cite{OnoforZagier} for a long list).
We now focus on the history of the family given in ~\eqref{eqn:thefam-intro}, where many authors have studied it in varying degrees of generality. As stated above, the intuition that the point counts on the Legendre family of elliptic curves can be related to a (truncated) hypergeometric equation corresponding to the family's period can be traced back to work of Igusa~\cite{Igusa}. Its relation to finite field hypergeometric functions has been articulated in numerous ways (see \cite[\S4]{Koike}, \cite[Theorem 1]{Ono}, \cite[Theorem 2]{Rouse}), then proven for a new example \cite{Goo18} by considering a quadruple cover. 

More recently, Deines, Fusilier, Long, Swisher, and Tu analyzed the family 
\begin{equation}\label{eqn:DFLST}
y^n = (x_1\cdots x_{n-1})^{n-1}(1-x_1) \cdots (1-x_{n-1})(x_1 - \lambda x_2\cdots x_{n-1})
\end{equation}
as a higher-dimensional version of the Legendre curves, and show in \cite[Theorem 2]{DFLST} that its rational point count is the sum of a polynomial and finite-field hypergeometric functions as defined by Greene \cite{Greene}.  Fusilier, Long, Ramakrishna, Swisher, and Tu then generalized this result \cite{FLRST} to the case
\begin{equation} \label{eqn:FLRST}
y^m = x_1^{a_1}(1-x_1)^{b_1} \cdots x_{n-1}^{a_n}(1-x_{n-1})^{b_{n-1}}(1 - t x_1x_2\cdots x_{n-1})^{b_n}.
\end{equation}
(Compare also with Katz \cite[Theorem 8.4.1]{Katz96}.)

Over $\F_q$, when $q$ is a prime power so that $q \equiv 1 \pmod m$, they prove \cite[Proposition 4.2]{FLRST} that the $\F_q$-rational point counts correspond to a polynomial in $q$ and a sum of finite field hypergeometric functions. However, to do so, they created a new recursive definition for finite field hypergeometric functions (see (4.4) of loc. cit.) to allow them to use character theory to directly imply the result. 

Their finite field hypergeometric function definition uses a \emph{period normalization}, and therefore differs from those given in the literature by previous authors Greene \cite{Greene}, McCarthy \cite{DM}, Katz \cite{Katz}, and Beukers--Cohen--Mellit \cite{BCM}.  The authors \cite[\S 4.4]{FLRST} provide a relation between their hypergeometric function and that given in Greene's paper.  Also, they related theirs to that given by McCarthy, but only in the so-called \emph{primitive} case \cite[Definition 4.3]{FLRST} when $a_i - b_j \not \in m\Z$ for all $i,j=1,\dots,n$.  (Note that primitive implies nondegenerate, but not conversely.)  

Given these differences, we found it less than straightforward to combine results from these papers.  We navigate around this obstacle by restricting ourselves to only allowing formulas involving the nondegenerate hypergeometric functions, where there is more agreement and the theorem of Katz allows us to recognize the associated $L$-series as motivic. Indeed, one may interpret our main result as saying that the $L$-series of a degenerate hypergeometric motive arises from a Hecke Grossencharacter.  We found that the period normalization (following Fusilier--Long--Ramakrishna--Swisher--Tu) gives the simplest formulation in our main result (\Cref{thm:mainthm}); but the choice of signs is essential.

Stepping back a bit further, there are other important families which realise hypergeometric motives.  When the parameters are \emph{defined over $\Q$}---meaning that for all $k \in (\Z/m\Z)^\times$ we have $k\pmb{\alpha}=\alpha$ and $k\pmb{\beta}=\beta$ as multisets---there is a particularly nice setup: see the toric description of \emph{source varieties} described by Roberts--Rodriguez-Villegas \cite[\S 3]{RRV} and related explicitly to the models defined by Beukers--Cohen--Mellit \cite{BCM}.  However, these models are not available when the parameters are not defined over $\Q$, and so this hypothesis is substantial (the Klein--Mukai pencil of K3 surfaces in $\PP^3$ defined by $x^4+y^3z+z^3w+w^3y-4txyzw=0$ \cite{DKSSVW20} already sees hypergeometric motives not defined over $\Q$).  

Lastly, we remark the family \eqref{eqn:FLRST} is affine, and therefore a singular compactification of the starting family $Y_{\aunder,\bunder,m}$ (our equation has been symmetrized).  It is unclear how to proceed to understand the geometry of these families (for example, to understand the Hodge numbers) without a partial desingularization and compatification.  In sum, then, our contribution is to define a nicer geometric realization of hypergeometric motives that is available for all parameters and permits the computation of the zeta function over $\Q$ as a product of $L$-series of nondegenerate hypergeometric motives and Hecke Grossencharacters.  

\subsection{Acknowledgements}

The authors would like to thank Asem Abdelraouf, Giulia Gugiatti, Nalini Joshi, Daniel Kaplan, Albrecht Klemm, David Roberts, Fernando Rodriguez Villegas, and Wadim Zudilin for discussions relating to this work. Certain calculations used in this paper were done in Magma \cite{Magma}.  Kelly acknowledges support from EPSRC Grant EP/S03062X/1, the UK Research and Innovation Future Leaders Fellowship MR/T01783X/1, and the hospitality of Dartmouth College.  Voight was supported by a Simons Collaboration grant (550029) and would like to thank the hospitality of the Abdus Salam International Centre for Theoretical Physics (ICTP), where some of this research was undertaken as part of the \emph{Workshop on Number Theory and Physics} in June 2024.  

\section{Hypergeometric functions and integral representations}

For motivation, we begin with the classical, complex theory of (generalized) hypergeometric functions.  We present this standard material in a symmetric way.

\subsection{Differential equation}

Let $(x)_k$ be the rising factorial defined for $x \in \C$ and $k \in \Z_{\geq 0}$ by $(x)_0 \colonequals 1$ and
\[ (x)_k \colonequals x(x+1)\cdots(x+k-1)=\frac{\Gamma(x+k)}{\Gamma(x)} \]
for $k>0$, where $\Gamma$ is the usual complex $\Gamma$-function.  

\begin{defn} \label{defn:hypergeometricFunction}
Let $n\in\Z_{\geq 1}$, let $\pmb{\alpha}=(\alpha_1,\dots,\alpha_n) \in \Q^n$ and $\pmb{\beta}=(\beta_1,\dots,\beta_n)\in (\Q_{>0})^n$; we call $\pmb{\alpha}$ the \defi{numerator parameters} and $\pmb{\beta}$ the \defi{denominator parameters}.  The \defi{(generalized) hypergeometric function} is the formal series 
\begin{equation}
F(\pmb{\alpha},\pmb{\beta}, t) \colonequals \sum_{k=0}^{\infty}\frac{(\alpha_1)_k\cdots(\alpha_n)_k}{(\beta_1)_k\cdots(\beta_n)_k}t^k \in \Q[[t]]. \label{E:hyperg}
\end{equation}
\end{defn}

Consider the differential operator $\theta \colonequals t \displaystyle{\frac{\dD}{\dD t}}$.
We define the \defi{hypergeometric differential operator} for parameters $\pmb{\alpha},\pmb{\beta}$ to be
\begin{equation} \label{eqn:Dab'lldoit}
D(\pmb{\alpha},\pmb{\beta},t) \colonequals (\theta +\beta_1 -1)\cdots(\theta + \beta_n -1) - t (\theta+\alpha_1)\cdots(\theta + \alpha_n). 
\end{equation}
When $\beta_j=1$ for some $j$, the hypergeometric function $F(\pmb{\alpha},\pmb{\beta}, t)$ is annihilated by $D(\pmb{\alpha},\pmb{\beta},t)$.  In general, we have the following.

\begin{lem} \label{lem:shiftinga1d}
For every $j=1,\dots,n$, the operator $D(\pmb{\alpha},\pmb{\beta},t)$ annihilates the function
 \[
 F_j(\pmb{\alpha},\pmb{\beta},t) \colonequals t^{1-\beta_j}F(\pmb{\alpha} + (1-\beta_j), \pmb{\beta} + (1-\beta_j), t),
 \]
 where $\pmb{\alpha} + (1-\beta_j) = (\alpha_1+1-\beta_j,\dots,\alpha_n+1-\beta_j)$ and similarly for $\pmb{\beta}+(1-\beta_j)$. 
\end{lem}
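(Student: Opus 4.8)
The plan is to reduce the general statement to the special case already noted in the excerpt—that $D(\pmb{\alpha}',\pmb{\beta}',t)$ annihilates $F(\pmb{\alpha}',\pmb{\beta}',t)$ whenever $\beta'_j = 1$ for some $j$—by a substitution of the shape $t \mapsto t^c$. The one computation needed is the elementary conjugation identity for $\theta = t\,\dD/\dD t$: since $\theta(t^c) = c\,t^c$ and $\theta$ is a derivation, $\theta(t^c g) = t^c(\theta + c)g$ for any constant $c$ and any (formal) series $g$. Iterating,
\[ \prod_{i=1}^{n}(\theta + u_i)\,(t^c g) = t^c \prod_{i=1}^{n}(\theta + u_i + c)\,g \]
for arbitrary constants $u_1,\dots,u_n$, and multiplication by $t$ commutes past $t^c$ trivially. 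Feeding this into the two terms of the definition \eqref{eqn:Dab'lldoit} of $D(\pmb{\alpha},\pmb{\beta},t)$ yields the clean operator relation
\[ D(\pmb{\alpha},\pmb{\beta},t)\circ t^c = t^c\circ D(\pmb{\alpha}+c,\,\pmb{\beta}+c,\,t), \]
where $\pmb{\alpha}+c$ and $\pmb{\beta}+c$ denote the componentwise shifts.

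Next I would specialize $c = 1-\beta_j$, so that $t^c\,F(\pmb{\alpha}+(1-\beta_j),\pmb{\beta}+(1-\beta_j),t) = F_j(\pmb{\alpha},\pmb{\beta},t)$ by definition. The $j$-th coordinate of the shifted denominator vector $\pmb{\beta}+(1-\beta_j)$ equals $\beta_j + (1-\beta_j) = 1$, so the recalled base case applies to the shifted parameters: $D(\pmb{\alpha}+(1-\beta_j),\pmb{\beta}+(1-\beta_j),t)$ annihilates $F(\pmb{\alpha}+(1-\beta_j),\pmb{\beta}+(1-\beta_j),t)$. Composing with the conjugation relation gives $D(\pmb{\alpha},\pmb{\beta},t)F_j(\pmb{\alpha},\pmb{\beta},t) = t^{1-\beta_j}\cdot 0 = 0$, which is the claim.

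The only point requiring care—and the closest thing to an obstacle—is the formal interpretation when the shift $\pmb{\beta}+(1-\beta_j)$ puts a nonpositive integer into some other coordinate $\beta_i + 1 - \beta_j$ with $i \neq j$, so that $F(\pmb{\alpha}+(1-\beta_j),\pmb{\beta}+(1-\beta_j),t)$ acquires a vanishing denominator. There I would either impose mild genericity on $\pmb{\alpha},\pmb{\beta}$, or observe that the base case itself rests on the coefficientwise identity $D(\pmb{\alpha}',\pmb{\beta}',t)F(\pmb{\alpha}',\pmb{\beta}',t) = \prod_i(\beta'_i - 1)$, which telescopes term by term and hence remains valid read as an identity of formal expressions. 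Apart from this bookkeeping, there is no genuine difficulty: the substance of the argument is the single line $t^{-c}\circ D(\pmb{\alpha},\pmb{\beta},t)\circ t^c = D(\pmb{\alpha}+c,\pmb{\beta}+c,t)$ together with the $\beta_j = 1$ base case.
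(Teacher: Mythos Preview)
Your argument is correct and is precisely the ``direct computation'' the paper invokes: the conjugation identity $t^{-c}\,D(\pmb{\alpha},\pmb{\beta},t)\,t^{c}=D(\pmb{\alpha}+c,\pmb{\beta}+c,t)$ together with the already-stated base case $\beta_j=1$ is the intended proof, and your handling of the residual constant term $\prod_i(\beta_i'-1)$ is exactly right.
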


\begin{proof} Direct computation.\end{proof}

\begin{rmk}
When $\#\pmb{\beta}=n$ (i.e., the denominator parameters are all distinct), \Cref{lem:shiftinga1d} gives a basis of solutions to the differential equation $D(\pmb{\alpha},\pmb{\beta},t)F=0$ \cite[\S 2.1.2]{slater} (see also \cite{Levelt1}).  Otherwise, one may need solutions involving the logarithm \cite{Levelt2}.
\end{rmk}

\subsection{Integral representation}

Euler \cite{Eul48} provided an integral representation for the hypergeometric function with parameters $\pmb{\alpha}=\{a,b\}$ and $\pmb{\beta}=\{c,1\}$, namely
\begin{equation}
F(a,b;c,1;t) = \frac{\Gamma(c)}{\Gamma(b)\Gamma(c-b)}\int_0^1 x^{b-1}(1-x)^{c-b-1}(1-tx)^{-a}\,\mathrm{d}x
\end{equation}
whenever $c>b>0$ and for all $t \in \C$ with $\abs{t}<1$.  This formula can be proven by expanding $(1-tx)^{-a}$ using the binomial theorem and integrating term-by-term to reduce to Euler's integral formula for the $\beta$-function.  This integral formula then inductively generalizes as follows.  In light of \Cref{lem:shiftinga1d}, \emph{ordering} the parameters we may suppose without loss of generality that $\beta_n=1$.  

\begin{lem} \label{lem:euler}
Suppose that $\alpha_i-\beta_j \not\in \Z$ for all $i,j$ and that $\beta_n=1$.  Then we have the equality
\begin{align*}
F(\pmb{\alpha},\pmb{\beta},t) 
&= \prod_{i=1}^{n-1} \frac{\Gamma(\beta_i)}{\Gamma(\alpha_i)\Gamma(\beta_i-\alpha_i)} \int_0^1 \cdots \int_0^1 
(1-tx_1\cdots x_{n-1})^{-\alpha_n} \prod_{i=1}^{n-1} x_i^{\alpha_i-1} (1-x_i)^{\beta_i-\alpha_i-1} \,\mathrm{d}x_i \\
&= e^{\pi i \alpha_n} \gamma_{\pmb{\alpha},\pmb{\beta}}\underset{tx_1\cdots x_n = 1}{\int_0^1 \cdots\int_0^1} x_n \prod_{i=1}^{n} x_i^{\alpha_i-1} (1-x_i)^{\beta_i-\alpha_i-1} \,\mathrm{d}x_1 \cdots \mathrm{d}x_{n-1}
\end{align*}
in a domain of convergence (for example, $\abs{\arg(1-t)} < \pi$ and $\Repart(\beta_i)>\Repart(a_i)>0$ for all $i=1,\dots,n-1$),
where
\begin{equation} 
\gamma_{\pmb{\alpha},\pmb{\beta}} \colonequals \prod_{i=1}^{n} \frac{\Gamma(\beta_i)}{\Gamma(\alpha_i)\Gamma(\beta_i-\alpha_i)} 
\end{equation}
(and in the second integral, we mean the restriction of the differential).
\end{lem}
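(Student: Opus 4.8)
The plan is to establish the first equality by a straightforward induction on $n$ that iterates Euler's classical two-variable formula, and then to derive the second equality as a purely formal rewriting using the reflection formula for $\Gamma$ together with a change of variables that introduces the constraint $tx_1\cdots x_n = 1$.

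For the first equality, I would argue by induction on $n$. The base case $n=1$ (where $\pmb\alpha = (\alpha_1)$, $\pmb\beta=(1)$) is trivial: the right-hand side has an empty product of Gamma-factors and an empty iterated integral, so it reads $(1-t)^{-\alpha_1} = F(\alpha_1;1;t)$, which is the binomial series. For the inductive step, I would start from $F(\pmb\alpha,\pmb\beta,t) = \sum_k \frac{(\alpha_1)_k\cdots(\alpha_n)_k}{(\beta_1)_k\cdots(\beta_{n-1})_k\, k!}\, t^k$ and insert, for the index $i = n-1$, the Euler Beta integral
\[
\frac{(\alpha_{n-1})_k}{(\beta_{n-1})_k} = \frac{\Gamma(\beta_{n-1})}{\Gamma(\alpha_{n-1})\Gamma(\beta_{n-1}-\alpha_{n-1})} \int_0^1 x_{n-1}^{\alpha_{n-1}+k-1}(1-x_{n-1})^{\beta_{n-1}-\alpha_{n-1}-1}\,\dD x_{n-1},
\]
valid since $\Repart(\beta_{n-1}) > \Repart(\alpha_{n-1}) > 0$. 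Interchanging sum and integral (justified by absolute convergence in the stated domain, e.g.\ $\abs{t} < 1$, and then extended by analytic continuation in $t$ to $\abs{\arg(1-t)}<\pi$) collapses the sum over $k$ into $F\bigl((\alpha_1,\dots,\alpha_{n-2},\alpha_n),(\beta_1,\dots,\beta_{n-2},1), t x_{n-1}\bigr)$ with a single free parameter $x_{n-1}$ in the argument. Applying the inductive hypothesis to this $(n-1)$-variable hypergeometric function with argument $tx_{n-1}$ produces the remaining $n-2$ integrals and the factor $(1 - tx_1\cdots x_{n-1})^{-\alpha_n}$, and collecting the Gamma-prefactors gives $\prod_{i=1}^{n-1}\Gamma(\beta_i)/(\Gamma(\alpha_i)\Gamma(\beta_i-\alpha_i))$, as claimed. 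The hypothesis $\alpha_i - \beta_j \notin \Z$ is used to ensure that none of the Gamma-factors has a pole and that the term-by-term binomial expansions are legitimate throughout.

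For the second equality, recall $\beta_n = 1$ so the $i=n$ term of the product $\prod_{i=1}^n \Gamma(\beta_i)/(\Gamma(\alpha_i)\Gamma(\beta_i-\alpha_i))$ contributes $\Gamma(1)/(\Gamma(\alpha_n)\Gamma(1-\alpha_n)) = \sin(\pi\alpha_n)/\pi$ by the reflection formula. Hence $\gamma_{\pmb\alpha,\pmb\beta} = \bigl(\sin(\pi\alpha_n)/\pi\bigr)\prod_{i=1}^{n-1}\Gamma(\beta_i)/(\Gamma(\alpha_i)\Gamma(\beta_i-\alpha_i))$, so $\frac{(-1)^{\alpha_n}\pi}{\sin(\pi\alpha_n)}\gamma_{\pmb\alpha,\pmb\beta} = (-1)^{\alpha_n}\prod_{i=1}^{n-1}\Gamma(\beta_i)/(\Gamma(\alpha_i)\Gamma(\beta_i-\alpha_i))$, matching the prefactor of the first integral up to the sign $(-1)^{\alpha_n}$. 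It remains to see that introducing the new variable $x_n$ subject to $tx_1\cdots x_n = 1$ converts $(1-tx_1\cdots x_{n-1})^{-\alpha_n}$ into $x_n \cdot x_n^{\alpha_n - 1}(1-x_n)^{\alpha_n - \beta_n} = x_n^{\alpha_n}(1-x_n)^{-1}$ times the sign. Indeed, on the constraint locus $x_n = 1/(tx_1\cdots x_{n-1})$, so $1 - x_n = (tx_1\cdots x_{n-1} - 1)/(tx_1\cdots x_{n-1}) = -x_n(1 - tx_1\cdots x_{n-1})$; thus $(1-tx_1\cdots x_{n-1})^{-\alpha_n} = (-x_n)^{\alpha_n}(1-x_n)^{-\alpha_n} x_n^{-\alpha_n} = (-1)^{\alpha_n} x_n^{\alpha_n}(1-x_n)^{-\alpha_n}$ (branches chosen consistently). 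Since $\beta_n = 1$, we have $\alpha_n = \beta_n - 1 + \alpha_n$, so $x_n^{\alpha_n}(1-x_n)^{-\alpha_n} = x_n \cdot x_n^{\alpha_n - 1}(1-x_n)^{\beta_n - \alpha_n - 1}$, which is exactly the $i=n$ factor of $\prod_i x_i^{\alpha_i - 1}(1-x_i)^{\beta_i-\alpha_i-1}$ together with the extra factor $x_n$ displayed in the statement. The $(-1)^{\alpha_n}$ cancels the one picked up from the reflection formula, and since $\dD x_n$ on the constraint locus is the restricted differential, the two integrals agree.

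The main obstacle is not any single computation but the bookkeeping of branches and domains of convergence: one must fix compatible branch cuts for the multivalued integrands $x_i^{\alpha_i-1}(1-x_i)^{\beta_i-\alpha_i-1}$ and $(1-tx_1\cdots x_{n-1})^{-\alpha_n}$ (and likewise for $(-x_n)^{\alpha_n}$, $(1-x_n)^{-\alpha_n}$) so that the symbol $(-1)^{\alpha_n}$ is unambiguous, and one must justify the interchange of summation and integration and the subsequent analytic continuation in $t$ from the disk $\abs{t}<1$ to the cut plane $\abs{\arg(1-t)}<\pi$. Both are routine given the stated real-part hypotheses $\Repart(\beta_i) > \Repart(\alpha_i) > 0$, which guarantee absolute convergence of every Beta integral in the iteration; I would simply remark that the identity, once proved on an open set, propagates by the identity theorem, and that the constraint-locus form is the pullback of the differential form under the rational map $x_n = 1/(tx_1\cdots x_{n-1})$.
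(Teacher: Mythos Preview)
Your proposal is correct and follows essentially the same route as the paper: the paper simply cites Slater \cite[(4.1.3)]{slater} for the first equality (which is exactly the iterated Euler--Beta integral argument you spell out by induction) and says the second ``comes from the relation $\Gamma(z)\Gamma(1-z)=\pi/\sin(\pi z)$ and substitution,'' which is precisely your reflection-formula-plus-$x_n = 1/(tx_1\cdots x_{n-1})$ computation. There is a small slip in your intermediate display (the string $(-x_n)^{\alpha_n}(1-x_n)^{-\alpha_n}x_n^{-\alpha_n}$ should read $(-x_n)^{\alpha_n}(1-x_n)^{-\alpha_n}$, i.e.\ drop the stray $x_n^{-\alpha_n}$), but your final expression $(-1)^{\alpha_n}x_n^{\alpha_n}(1-x_n)^{-\alpha_n}$ and the overall cancellation are correct.
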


\begin{proof}
The domain of convergence and first line of the equality can be found in Nesterenko \cite[Lemma 2]{Nest} applying Slater \cite[(4.1.3)]{slater}; the second equality comes from the relation $\Gamma(z)\Gamma(1-z)=\pi/\sin(\pi z)$ and substitution.  
\end{proof}

\begin{rmk}
Although the hypergeometric function is visibly independent of permutation of the parameters, the integral representation is not: at least in its symmetrized form, it depends on a matching between numerator and denominator pairings.

In the below, we work with a version with a different choice of signs; this changes the constant $\gamma_{\pmb{\alpha},\pmb{\beta}}$ by a root of unity, something which is already a bit complicated to analyze precisely in the branched cover.
\end{rmk}

\section{Birational models for the integral representation}

In this section, using the integral representation \Cref{lem:euler} we define a family of smooth affine varieties with hypergeometric periods; we then give a partial compactification.  

\subsection{Starter model}

We start by elaborating upon the family \eqref{eqn:thefam-intro}, starting with a more complete definition.  
Our family is defined over the base $T \colonequals \PP^1 \smallsetminus \{0,1,\infty\}$ (initially over $\Z$, then defined over a base ring depending on the parameters).  We begin with the affine variety $U_n \subseteq T \times_{\Z} \A_{\Z}^n$ defined by
\begin{equation} \label{eqn:affop}
\begin{aligned}
tx_1 x_2 \cdots x_n &= 1 \\
x_1,x_2,\dots,x_n &\neq 0,1. 
\end{aligned}
\end{equation}
When $n$ is understood, we suppress notation and write $U$ for $U_n$. The fibers of the map $U \to T$ are contained in the torus $(\Gm)^n \subset \A^n$, in fact in the open where the axes $x_i=1$ are removed.

We now define a family of branched covers of $U$.  Let $\aunder=(a_1,\dots,a_n)$ and $\bunder=(b_1,\dots,b_n)$ with $a_i,b_j \in \Z$.   Define
\begin{equation} \label{eqn:thefam-f}
f_{\aunder,\bunder}(x_1,\dots,x_n)  \colonequals \prod_{i=1}^{n} (-x_i)^{a_i} (1-x_i)^{b_i-a_i} \in \Z[x_1,\dots,x_n,(x_1(1-x_1)\cdots x_n(1-x_n))^{-1}].
\end{equation}
Let $m \in \Z_{\geq 1}$.  We now define the hypersurface in $U \times \A^1$
\begin{equation} \label{eqn:thefam}
Y = Y_{\aunder,\bunder,m} \colon y^m = f_{\aunder,\bunder}(x_1,\dots,x_n)
\end{equation}
together with the natural branched cover $Y \to U$.   

\begin{lem} \label{lem:cmon}
For any commutative ring $R$ with $1 \neq 0$ and $t \in R$ such that $t,t-1 \in R^\times$, the fiber $Y_t$ of the family \eqref{eqn:thefam} is smooth of dimension $n-1$.  
\end{lem}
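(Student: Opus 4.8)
The plan is to verify smoothness fiber-by-fiber using the Jacobian criterion on the single defining equation $y^m = f_{\aunder,\bunder}(x_1,\dots,x_n)$ over the base where $x_1,\dots,x_n \in \Gm$ (more precisely where each $x_i$ and $1-x_i$ is a unit, since these are the factors appearing in $f$, inverted in \eqref{eqn:thefam-f}). First I would set $g(y,x_1,\dots,x_n) \colonequals y^m - f_{\aunder,\bunder}(x)$ as a function on the smooth ambient scheme $\Spec R[y,x_1,\dots,x_n, (y \cdot x_1(1-x_1)\cdots x_n(1-x_n))^{-1}]$ restricted to the fiber $t = $ const --- wait, more carefully: the fiber $Y_t$ sits inside $\A^1_y \times (\text{the } x_i\text{-torus minus the } x_i=1 \text{ loci})$ cut out by the \emph{one} equation $g=0$, together with the constraint $tx_1\cdots x_n = 1$. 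So the ambient space for the fiber is the $(n-1)$-dimensional smooth affine variety $U_t \times \A^1_y$ minus the hypersurface $y = 0$ would not be removed since $y$ need not be a unit --- actually $f_{\aunder,\bunder}(x)$ is a unit on $U_t$, so $y^m$ is a unit, hence $y \in R^\times$ automatically; I would record this so that $Y_t$ is a subscheme of $(\Gm)_y \times U_t$, which is smooth of dimension $n$.

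Next, on this smooth ambient space $Y_t$ is cut out by the single equation $g = y^m - f(x) = 0$, so it suffices to show $\dD g$ is nowhere zero along $Y_t$, which will give that $Y_t$ is a smooth complete intersection of codimension $1$, hence smooth of dimension $n-1$. The key computation is $\partial g/\partial y = m y^{m-1}$. Since $y \in R^\times$ and $m \neq 0$ in $\Z$, this is already problematic if $\opchar R \mid m$ --- so I need to be slightly careful: the statement as given has no hypothesis on $m$ being invertible in $R$. Let me reconsider: actually one should use a different coordinate. Since $f(x) \in R^\times$ on the whole fiber, I can rewrite the equation, away from any characteristic issue, by using the other partials: $\partial g/\partial x_i = -\partial f/\partial x_i$. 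Now $f = \prod (-x_i)^{a_i}(1-x_i)^{b_i - a_i}$, and the logarithmic derivative gives
\[
\frac{1}{f}\frac{\partial f}{\partial x_i} = \frac{a_i}{x_i} - \frac{b_i - a_i}{1 - x_i} = \frac{a_i(1-x_i) - (b_i - a_i)x_i}{x_i(1-x_i)} = \frac{a_i - b_i x_i}{x_i(1-x_i)}.
\]
The subtlety is that $U_t$ is \emph{not} a product --- it has the relation $tx_1\cdots x_n = 1$ --- so I cannot treat the $x_i$ as independent coordinates, and the relevant Jacobian is that of the pair $(tx_1\cdots x_n - 1,\ y^m - f)$ inside $\A^1_y \times (\Gm)^n$. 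I would instead argue: on $(\Gm)^n$ with the subtorus relation, use $x_1,\dots,x_{n-1}$ as local coordinates (so $x_n = (tx_1\cdots x_{n-1})^{-1}$), and compute the partials of $g$ with respect to these and $y$; alternatively, and more cleanly, just show the $2 \times (n+1)$ Jacobian of $(tx_1\cdots x_n - 1, y^m - f)$ has rank $2$ everywhere on $Y_t$. The first row has entries $tx_1\cdots x_n / x_i = 1/x_i$ in the $x_i$-slots (all nonzero) and $0$ in the $y$-slot; the second row has $my^{m-1}$ in the $y$-slot and $-f \cdot (a_i - b_i x_i)/(x_i(1-x_i))$ in the $x_i$-slots. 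Rank $2$ fails only if the second row is a scalar multiple of the first, i.e. iff $my^{m-1} = 0$ \emph{and} the $x_i$-slots are proportional; so when $m \in R^\times$ we are immediately done since $y \in R^\times$. To handle general $R$ (including $\opchar R \mid m$), I would note that the second row has $x_i$-slot $= -f\cdot(a_i - b_i x_i)/(x_i(1-x_i))$; if this were a scalar multiple of the first row's $x_i$-slot $1/x_i$ then $a_i - b_i x_i$ would be independent of the constraint in a way that... hmm, actually this can genuinely fail. I suspect the intended reading is that $m$ is invertible — indeed the family \eqref{eqn:thefam-intro} is stated over $\Z[1/m]$ — so I would add the (evidently intended) hypothesis that $m \in R^\times$, under which the argument above is immediate.

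So the clean writeup: work on the fiber inside $\A^1_y \times \Spec R[x_1^{\pm},\dots,x_n^{\pm}]/(tx_1\cdots x_n - 1)$, which is smooth over $R$ of relative dimension $n$; note $f_{\aunder,\bunder}(x)$ is a unit there, hence so is $y$ on $Y_t$; compute that the gradient of $y^m - f_{\aunder,\bunder}(x)$ has $y$-component $my^{m-1} \in R^\times$; conclude by the Jacobian criterion that $Y_t \to \Spec R$ is smooth of relative dimension $n-1$. The main obstacle is purely bookkeeping: organizing the ambient space correctly so that the torus relation $tx_1\cdots x_n = 1$ is built in (rather than imposed as an extra equation whose Jacobian row one must track), and observing that $f$ being a unit forces $y$ to be a unit so that $my^{m-1}$ is genuinely nonvanishing. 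There is no deep content; the lemma is essentially the statement that a cyclic cover $y^m = f$ of a smooth scheme, branched along the vanishing of $f$, is smooth away from that branch locus when $m$ is invertible — and here $f$ never vanishes on $U_t$ by construction.
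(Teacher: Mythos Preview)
Your proposal is correct and arrives at exactly the same point the paper does: the paper's entire proof is the single sentence ``Immediate, as we have removed the branch locus,'' which is precisely your final observation that a cyclic cover $y^m = f$ of a smooth base is smooth wherever $f$ is a unit (and $m$ is invertible), and that $f_{\aunder,\bunder}$ is a unit on $U_t$ by construction. Your explicit Jacobian bookkeeping and your caveat that one needs $m \in R^\times$ (the family is indeed declared over $\Z[1/m]$ in \eqref{eqn:thefam-intro}) are both accurate elaborations of what the paper leaves implicit.
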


\begin{proof}
Immediate, as we have removed the branch locus.
\end{proof}

It is an important step in the theory of motives to recognize the corresponding period integrals, as follows.

\begin{lem} \label{lem:eulerperiod}
Suppose that $a_i-b_j \not \in m\Z$ for all $i,j=1,\dots,n$.  Let $c_1,\dots,c_n,d_1,\dots,d_n \in \Z$, and suppose $c_n=-d_n$.  Then for all $k \in (\Z/m\Z)^\times$, and for all $t \in \C$, there exists a cycle $Z_t$ on $Y$ such that 
\begin{equation} 
\int_{Z_t} \frac{\prod_{i=1}^n (-x_i)^{c_i} (1-x_i)^{d_i}}{y^k}\,\mathrm{d}x_1\,\cdots\,\mathrm{d}x_{n-1}  = \nu \gamma_{\pmb{\alpha},\pmb{\beta}} t^{k b_n/m} 
F(\pmb{\alpha}, \pmb{\beta}, t)
\end{equation}
under an appropriate branch cut, where 
\begin{equation}
\nu = \frac{\pi e^{\pi i \sum_{i=1}^n (c_i-ka_i/m)}}{\sin(k\pi(b_n-a_n)/m))}
\end{equation}
and
\begin{equation} \label{eqn:alphabetam}
\begin{aligned}
\pmb{\alpha} &\colonequals \left(c_1+1+\frac{k(b_n-a_1)}{m},\dots,c_{n-1}+1+\frac{k(b_n-a_{n-1})}{m},c_n+\frac{k(b_n-a_n)}{m}\right);  \\
\pmb{\beta} &\colonequals \left(2+c_1+d_1+\frac{k(b_n-b_1)}{m},\dots,2+c_{n-1}+d_{n-1}+\frac{k(b_n-b_{n-1})}{m},1\right).
\end{aligned}
\end{equation}
\end{lem}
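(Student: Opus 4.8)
The plan is to reduce Lemma~\ref{lem:eulerperiod} to the integral representation in \Cref{lem:euler} by carefully unwinding the branched cover $Y \to U$ and identifying the holomorphic differential $\mathrm{d}x_1\cdots\mathrm{d}x_{n-1}/y$ with the integrand appearing there. First I would fix $k \in (\Z/m\Z)^\times$ and choose a branch of $y = f_{\aunder,\bunder}(x)^{k/m}$ over a suitable simply connected region of the fiber $U_t$ (the real box $0 < x_i < 1$ with $tx_1\cdots x_n = 1$ cut out, together with a branch cut for $\arg(1-t)$); the cycle $Z_t$ will be the closure in $Y_t$ of the lift of this real $(n-1)$-cell under the chosen branch, so that on $Z_t$ we have $1/y = f_{\aunder,\bunder}(x)^{-k/m}$. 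Replacing $\aunder,\bunder$ by $k\aunder, k\bunder$ does not change $m$ (by the hypothesis $a_i - b_j \notin m\Z$, equivalently the nondegeneracy that is preserved under multiplication by units mod $m$), and it sends $\pmb\alpha,\pmb\beta$ to the corresponding shifted tuples; since $F(\pmb\alpha,\pmb\beta,t)$ and $\gamma_{\pmb\alpha,\pmb\beta}$ transform compatibly, it suffices to treat $k=1$.

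Next I would substitute the explicit formula for $f_{\aunder,\bunder}$ into $1/y$: on $Z_t$,
\[
\frac{1}{y} = \prod_{i=1}^n (-x_i)^{-a_i/m}(1-x_i)^{-(b_i-a_i)/m}.
\]
The goal is to match this against the integrand $x_n \prod_{i=1}^n x_i^{\alpha_i - 1}(1-x_i)^{\beta_i - \alpha_i - 1}$ of the second integral in \Cref{lem:euler} with the specific $\pmb\alpha,\pmb\beta$ of \eqref{eqn:alphabetam}. Here the parameters are shifted: $\alpha_i = 1 + (b_n - a_i)/m$ for $i < n$ and $\alpha_n = (b_n - a_n)/m$, and $\beta_i = 2 + (b_n - b_i)/m$ for $i<n$, $\beta_n = 1$. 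Plugging in, $\alpha_i - 1$ and $\beta_i - \alpha_i - 1$ produce exponents $(b_n - a_i)/m$ and $(a_i - b_i)/m$ respectively, so the integrand equals $t^{b_n/m}\prod_i x_i^{-a_i/m}(1-x_i)^{(a_i - b_i)/m}$ after using $x_1\cdots x_n = 1/t$ to absorb the $t^{b_n/m}$ and the leftover factor of $x_n$ (which accounts for the restriction of the differential to the hypersurface $tx_1\cdots x_n = 1$, exactly as in the second line of \Cref{lem:euler}). Comparing with $1/y$ above, the two agree up to the sign discrepancy between $(-x_i)^{-a_i/m}$ and $x_i^{-a_i/m}$: this contributes the factor $(-1)^{\sum_i a_i/m}$, and the passage from the first to the second integral in \Cref{lem:euler} contributes the $\nu = (-1)^{(b_n-a_n)/m}\pi/\sin(\pi(b_n-a_n)/m)$ factor via $\Gamma(z)\Gamma(1-z) = \pi/\sin(\pi z)$ applied to $z = (b_n - a_n)/m = \alpha_n$.

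Finally I would invoke \Cref{lem:euler} directly (its hypothesis $\alpha_i - \beta_j \notin \Z$ translates back, under the substitution, to $a_i - b_j \notin m\Z$, which is our standing assumption, and $\beta_n = 1$ holds by construction) to conclude
\[
\int_{Z_t} \frac{\mathrm{d}x_1\cdots\mathrm{d}x_{n-1}}{y} = (-1)^{\sum_i a_i/m} t^{b_n/m} \nu\, \gamma_{\pmb\alpha,\pmb\beta}\, F(\pmb\alpha,\pmb\beta,t),
\]
first for real $t \in (0,1)$ where all the integrals converge absolutely and the real cell lifts cleanly, and then for all $t \in \C$ (off the branch cut) by analytic continuation in $t$, both sides being holomorphic where defined. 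The main obstacle I anticipate is bookkeeping the branch choices: the fractional powers $(-x_i)^{a_i/m}$, $(1-x_i)^{(b_i-a_i)/m}$ and $(1-tx_1\cdots x_n)^{-\alpha_n}$ each require a consistent branch cut, and the precise root of unity $(-1)^{\sum_i a_i/m}$ (interpreted as $e^{\pi i \sum_i a_i/m}$ or its conjugate) depends on how the cut through $-x_i < 0$ is taken relative to the cycle $Z_t$; one must pin down these conventions so that they are mutually compatible and so that $Z_t$ is an honest closed cycle on $Y_t$ rather than a chain with boundary. I would handle this exactly as in the classical derivation of \Cref{lem:euler} (Slater \cite{slater}), tracking only the extra sign coming from $-x_i$ versus $x_i$, and verify the constant by specializing to $n = 1$ or to the Legendre case $m = 2$ as a sanity check.
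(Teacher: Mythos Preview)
Your proposal is correct and follows essentially the same approach as the paper: compute $1/y$ from the branched cover, pull out the sign $(-1)^{\sum_i a_i/m}$, use $tx_1\cdots x_n=1$ to absorb a factor of $t^{b_n/m}$ and the stray $x_n$, and then match exponents against the second integral in \Cref{lem:euler} to read off $\pmb\alpha,\pmb\beta$ and the constant $\nu$. Your write-up is in fact more careful than the paper's on two points the paper leaves implicit (the role of $k$ and the branch-cut bookkeeping), but the underlying argument is the same.
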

\begin{rmk}
    We refer the reader to \cite[\textsection2.2, 2.7]{GKZ} for a description of the cycle $Z_t$.
\end{rmk}
\begin{cor}
With notation as in \Cref{lem:eulerperiod}, we have
\begin{equation} 
\int_{Z_t} \frac{\prod_{i=1}^n (-x_i)^{a_i-b_n-1} (1-x_i)^{b_i-a_i-1}}{y^{m-1}}\,\mathrm{d}x_1\,\cdots\,\mathrm{d}x_{n-1}  = \nu \gamma_{\pmb{\alpha},\pmb{\beta}}
t^{(m-1)b_n/m} F(\pmb{\alpha}, \pmb{\beta}, t)
\end{equation}
where
\begin{equation} \label{eqn:alphabetam-onthenose}
\begin{aligned}
\pmb{\alpha} &\colonequals \left(\frac{a_1-b_n}{m},\dots,\frac{a_{n-1}-b_n}{m},\frac{a_n-b_n}{m}-1\right);  \\
\pmb{\beta} &\colonequals \left(\frac{b_1-b_n}{m},\dots,\frac{b_{n-1}-b_n}{m},1\right).
\end{aligned}
\end{equation}
\end{cor}

\begin{proof}
Substitute $k=m-1$, $c_i=a_i-b_n-1$ and $d_i=b_i-a_i-1$ for $i=1,\dots,n$.
\end{proof}


It is an expression of the interplay mentioned in the introduction that the periods as expressed by the hypergeometric series above match the point as expressed by the finite field hypergeometric functions in our main result.  

\begin{proof}[Proof of \Cref{lem:eulerperiod}]
We compute that (after an appropriate branch cut),
\begin{equation} \label{eqn:ymm1}
\begin{aligned}
   \frac{\prod_i (-x_i)^{c_i} (1-x_i)^{d_i}}{y^k} &= \prod_{i=1}^n (-x_i)^{c_i-ka_i/m}(1-x_i)^{d_i+k(b_i-a_i)(-1)/m} \\
   &= (-1)^{\sum_{i=1}^n (c_i-ka_i/m)} \prod_{i=1}^{n} x_i^{c_i-ka_i/m}(1-x_i)^{d_i-k(b_i-a_i)/m}.
\end{aligned}
\end{equation}
We now apply \Cref{lem:euler} and match exponents.  We are required to take $\beta_n=1$ for the analytic expansion, so $\beta_n-\alpha_n-1=-\alpha_n=d_n-k(b_n-a_n)/m$ so $\alpha_n = k(b_n-a_n)/m-d_n$.  But then 
\begin{equation}
\begin{aligned}
x_n^{c_n-ka_n/m} &= x_n^{c_n+k(b_n-a_n)/m - kb_n/m} = x_n^{c_n+d_n+\alpha_n} x_n^{-kb_n/m} \\
&= x_n^{\alpha_n} (tx_1\cdots x_{n-1})^{kb_n/m}. 
\end{aligned}
\end{equation}
Expansion and substitution in \eqref{eqn:ymm1} then gives
\begin{equation} \label{eqn:ymm2}
   (-1)^{\sum_{i=1}^n (c_i-ka_i/m)} t^{kb_n/m} \left(\prod_{i=1}^{n-1} x_i^{c_i+k(b_n-a_i)/m}(1-x_i)^{d_i-k(b_i-a_i)/m}\right) x_n^{\alpha_n} (1-x_n)^{-\alpha_n}.
\end{equation}
Now for $i < n$, the exponent match gives $\alpha_i-1=c_i+k(b_n-a_i)/m$ and $\beta_i-\alpha_i-1=d_i-k(b_i-a_i)/m$ which yields 
\begin{equation}
\beta_i=2+c_i+d_i+\frac{k(b_n-a_i)}{m} - \frac{k(b_i-a_i)}{m} = 2+c_i+d_i+\frac{k(b_n-b_i)}{m}
\end{equation}
giving the parameters in the statement.  We similarly match the coefficient in front.
\end{proof}


\subsection{Partial compactification}\label{subsec: Partial Compactification}

We now define a partial compactification (birational model) for the family $Y$ defined in \eqref{eqn:thefam}.  This model will admit an open immersion from $Y$, giving a partial compactification at points $(x_1,\dots,x_n,y)$ where some $x_i=1$.  

First off: without loss of generality (considering modulo $m$), we may and do suppose that $a_1,\dots,a_n,b_1,\dots,b_n \geq 0$ and $b_i \geq a_i$ for all $i$.  (We avoided making this hypothesis before now for uniformity in the description, but now we are trying to find a compactification so we need to clear denominators.  In principle, we only need to ask this for $b_i-a_i \geq 0$.)

We first subdivide the product defining $f_{\aunder,\bunder}$ according to a greatest common divisor with $m$.  For $d \mid m$ a (positive) divisor, define
\begin{equation}
f_{\aunder, \bunder, d}(x_1,\dots,x_n) \colonequals \prod_{\substack{i \\ \gcd(m,a_i)=d}} (-x_i)^{a_i / d} \prod_{\substack{i \\ \gcd(m,b_i-a_i)=d}} (1-x_i)^{(b_i-a_i)/d}. 
\end{equation}
Then
\begin{equation}
f_{\aunder,\bunder}(x_1,\dots,x_n) = \prod_{d \mid m} f_{\aunder,\bunder,d}(x_1,\dots,x_n)^d
\end{equation}
since all factors occur exactly once in the product, with the right exponent.

We work over $V \supseteq U$ defined by
\begin{equation}
\begin{aligned}
tx_1 x_2 \cdots x_n &= 1 \\
x_1,x_2,\dots,x_n &\neq 0;
\end{aligned}
\end{equation}
i.e., we close up the axes where a variable $x_i=1$.  However, our branched cover now comes with variables also indexed by divisors, i.e., we work in $(\Gm)^n \times \A^{\tau(m)}$ with toric variables $x_1,\dots,x_n$ and affine variables $y_d$ for $d \mid m$, where $\tau(m)$ is the number of divisors of $m$.  The branched cover $X=X_{\aunder,\bunder,m}$ is defined to be the subvariety in $V$ cut out by the system of equations
\begin{equation} \label{eqn:alleqns}
y_d^{d/h} = y_h \prod_{\substack{e \mid m \\ h \mid e \\ d \nmid e}} f_{\pmb{a}, \pmb{b}, e}(x_1,\dots,x_n)^{e/h},
\end{equation}
with an equation for all $h,d$ such that $h \mid d \mid m$, together with the equation $y_1=1$.  

\begin{prop} \label{prop:openimmers}
There is an open immersion
\begin{equation}
Y \hookrightarrow X
\end{equation}
defined by the identity on $x_1,\dots,x_n$ and
\begin{equation} \label{eqn:equivariant}
y_d = \frac{y^{m/d}}{\prod_{e : d \mid e \mid m} f_{\aunder,\bunder,e}(x_1,\dots,x_n)^{e/d}}
\end{equation}
for each $d \mid m$.  The birational inverse is defined by the identity on $x_1,\dots,x_n$ and
\begin{equation}
y = y_m f_{\aunder,\bunder,m}(x_1,\dots,x_n)^m.
\end{equation}
Moreover, the action of $\mu_m$ by $y \mapsto \zeta_m y$ extends to $X$ equivariantly using \eqref{eqn:equivariant}.
\end{prop}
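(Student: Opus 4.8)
The plan is to identify $Y$ with the preimage $X_U$ of $U\subseteq V$ under the structure map $X\to V$. Since $U$ is open in $V$, this $X_U$ is open in $X$, so it suffices to exhibit mutually inverse morphisms $Y\rightleftarrows X_U$ — the two maps in the statement — and then to transport the $\mu_m$-action. Throughout write $f_e\colonequals f_{\aunder,\bunder,e}(x_1,\dots,x_n)$ and use the factorization $f_{\aunder,\bunder}=\prod_{e\mid m}f_e^{e}$ recorded just before \eqref{eqn:alleqns}.

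First I would check that \eqref{eqn:equivariant} defines a morphism $Y\to X$ with image in $X_U$. Over $U$ every $x_i$ and every $1-x_i$ is a unit, hence so is each $f_e$, so the right-hand sides of \eqref{eqn:equivariant} are regular on $Y$ and the image lies over $U$. To see the image satisfies the defining equations \eqref{eqn:alleqns}: the equation $y_1=1$ becomes $y^m=\prod_{e\mid m}f_e^{e}$, which holds on $Y$ by \eqref{eqn:thefam}; and for $h\mid d\mid m$, raising $y_d=y^{m/d}/\prod_{e:\,d\mid e}f_e^{e/d}$ to the power $d/h$ gives $y_d^{d/h}=y^{m/h}\big/\prod_{e:\,d\mid e}f_e^{e/h}$, while — because $h\mid d$ forces $d\mid e\Rightarrow h\mid e$ — we have $y_h\prod_{e:\,h\mid e,\,d\nmid e}f_e^{e/h}=\bigl(y^{m/h}/\prod_{e:\,h\mid e}f_e^{e/h}\bigr)\cdot\prod_{e:\,h\mid e,\,d\nmid e}f_e^{e/h}=y^{m/h}\big/\prod_{e:\,d\mid e}f_e^{e/h}$, and the two agree. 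This bookkeeping — keeping straight which divisors $e$ enter each product, both here and in the composition computations below — is the one place genuine care is required, and is the step I expect to be the main (if not deep) obstacle.

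Next I would produce the inverse $X_U\to Y$. On $X$, equation \eqref{eqn:alleqns} for $(h,d)=(1,m)$ together with $y_1=1$ reads $y_m^{m}=\prod_{e\mid m,\,e\neq m}f_e^{e}$, so $(y_m f_m)^{m}=\prod_{e\mid m}f_e^{e}=f_{\aunder,\bunder}$; hence the formula of the statement for $y$ in terms of $y_m$ (obtained by inverting \eqref{eqn:equivariant} at $d=m$), together with the identity on the $x_i$, carries $X_U$ into $Y$, using that $x_i\neq 1$ over $U$. The two maps are mutually inverse: the composite $Y\to X_U\to Y$ is the identity because $y_m=y/f_m$ by \eqref{eqn:equivariant} at $d=m$; and the composite $X_U\to Y\to X_U$ reduces, in the $d$-th coordinate, to the identity $(y_m f_m)^{m/d}=y_d\prod_{e:\,d\mid e}f_e^{e/d}$, which is exactly \eqref{eqn:alleqns} for $(h,d')=(d,m)$ after multiplying both sides by $f_m^{m/d}$. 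Thus $Y\xrightarrow{\ \sim\ }X_U$ as schemes, which is the asserted open immersion; since $U$ is dense in $V$ it is moreover birational with the displayed rational inverse (here one should also record that $X$ has no component supported over $\bigcup_i\{x_i=1\}$, so that $X_U$ is dense in $X$).

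Finally, for equivariance I would define the $\mu_m$-action on $X$ by fixing $x_1,\dots,x_n$ and setting $y_d\mapsto\zeta_m^{m/d}y_d$, which is precisely the pullback of $y\mapsto\zeta_m y$ through \eqref{eqn:equivariant}. This preserves $X$: in the equation indexed by $(h,d)$ the left side scales by $(\zeta_m^{m/d})^{d/h}=\zeta_m^{m/h}$ and the right side scales by $\zeta_m^{m/h}$ through the factor $y_h$ (the $f_e$ depend only on the $x_i$), and $y_1=1$ is preserved since $\zeta_m^{m}=1$. By construction the isomorphism $Y\xrightarrow{\ \sim\ }X_U$ intertwines this action with $y\mapsto\zeta_m y$ on $Y$, so the open immersion $Y\hookrightarrow X$ is $\mu_m$-equivariant.
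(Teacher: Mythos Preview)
Your proof is correct and follows the same route as the paper's one-line argument (verify the two maps are mutually inverse on the locus where no $x_i=1$); you simply supply the bookkeeping the paper omits, and you also spell out the $\mu_m$-action on $X$ and its compatibility, which the paper's proof does not address. One small point: you work with $y=y_m f_m$ for the inverse, as obtained by inverting \eqref{eqn:equivariant} at $d=m$, and this is correct---the displayed formula $y=y_m f_{\aunder,\bunder,m}^{m}$ in the statement appears to carry a typo in the exponent, and your computation $(y_m f_m)^m=\prod_{e\mid m}f_e^{e}$ confirms the intended exponent is $1$.
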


\begin{proof}
One verifies that the map $X \dashrightarrow Y$ is an isomorphism (that is to say, defined) away from the points $(x_1,\dots,x_d,y)$ where $x_i=1$ for some $i=1,\dots,d$.  This map can be seen through considering \eqref{eqn:equivariant} in the case where $d = m$ and clearing denominators. 
\end{proof} 

For $I \subseteq [n] \colonequals \{1,\dots,n\}$, let
\begin{equation} \label{aIbImI}
\begin{aligned}
\aunder_I &\colonequals (a_i)_{i \in I} \\
\bunder_I &\colonequals (b_i)_{i \in I} \\
m_I &\colonequals \gcd(\{b_i-a_i : i \not \in I\} \cup \{m\}).
\end{aligned}
\end{equation}
We define a `twist' of $Y_{\aunder_I,\bunder_I,m_I}$ by $(-1)^{\sum_{i \not\in I} a_i}$ as follows.  Define 
the (quasi-)affine variety in $\A^{\#I}$ with variables $\{x_i\}_{i \in I}$ defined by the equations
\begin{equation}\label{def: Y a b m I subset}
Y_{\aunder, \bunder,m, I}' \colon  \quad
\begin{aligned}
    y^{m_I} &= (-1)^{\sum_{i \notin I} a_i} \prod_{i \in I} (-x_i)^{a_i}(1-x_i)^{b_i - a_i} \\
    1 &= t \prod_{i \in I} x_i\\
    x_i &\neq 0, 1 \text{ for all $i \in I$}.
\end{aligned}
\end{equation}

\begin{lem} \label{lem:xi1}
Let $I \subseteq \{x_1,\dots,x_n\}$ be a subset.  Then the intersection of $X_{\aunder,\bunder,m}$ with the affine subvariety of $V$ defined to be the locus where $x_i \neq 1$ for $i \in I$ and $x_i=1$ for $i \not \in I$ is isomorphic to $Y_{\aunder, \bunder,m, I}'$.
\end{lem}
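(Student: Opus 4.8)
The plan is to restrict the defining equations \eqref{eqn:alleqns} of $X_{\aunder,\bunder,m}$ to the locus $L_I\subseteq V$ where $x_i=1$ for $i\notin I$ and $x_i\neq1$ for $i\in I$, to determine which of the auxiliary variables $y_d$ survive, and then to write down explicit mutually inverse morphisms between $X_{\aunder,\bunder,m}\cap L_I$ and $Y'_{\aunder,\bunder,m,I}$. Throughout I use the running normalisation $a_i,b_i\geq0$ and $b_i\geq a_i$ in force since \S\ref{subsec: Partial Compactification}.

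First I would compute the restriction of each $f_{\aunder,\bunder,e}$ to $L_I$: setting $x_i=1$ for $i\notin I$ replaces $(-x_i)^{a_i/e}$ by $(-1)^{a_i/e}$ and kills every factor $(1-x_i)^{(b_i-a_i)/e}$ with $i\notin I$ and $b_i>a_i$. With the shorthands
\[ D_I\colonequals\{\gcd(m,b_i-a_i): i\notin I,\ b_i>a_i\},\qquad \epsilon_e\colonequals\prod_{\substack{i\notin I\\ \gcd(m,a_i)=e}}(-1)^{a_i/e},\qquad \tilde f_e\colonequals\prod_{\substack{i\in I\\ \gcd(m,a_i)=e}}(-x_i)^{a_i/e}\prod_{\substack{i\in I\\ \gcd(m,b_i-a_i)=e}}(1-x_i)^{(b_i-a_i)/e}, \]
one gets $f_{\aunder,\bunder,e}|_{L_I}=0$ if $e\in D_I$ and $f_{\aunder,\bunder,e}|_{L_I}=\epsilon_e\tilde f_e$ otherwise, where $\epsilon_e\in\{\pm1\}$ and $\tilde f_e\in\calO(L_I)^\times$ since $x_i,1-x_i$ are invertible on $L_I$ for $i\in I$. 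From $\gcd(x_1,\dots,x_k,m)=\gcd(\gcd(x_1,m),\dots,\gcd(x_k,m),m)$ one gets $m_I=\gcd(D_I\cup\{m\})$, so $d\mid m_I$ iff $d$ divides every element of $D_I$ (and $m_I=m$ exactly when $D_I=\emptyset$). The cases $h=1$ of \eqref{eqn:alleqns} read $y_d^d=\prod_{e\mid m,\ d\nmid e}(f_{\aunder,\bunder,e}|_{L_I})^e$: for $d\nmid m_I$ some $e\in D_I$ has $d\nmid e$, so the right-hand side vanishes and $y_d=0$, while for $d\mid m_I$ no $e\in D_I$ occurs, the right-hand side is a unit, and $y_d\in\calO^\times$. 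The cases $(h,d)=(d',m_I)$ with $d'\mid m_I$ give $y_{d'}=y_{m_I}^{m_I/d'}\big/\prod_{e:\, d'\mid e\mid m,\, m_I\nmid e}(\epsilon_e\tilde f_e)^{e/d'}$, so $X_{\aunder,\bunder,m}\cap L_I$ is parametrised by $(x_I,y_{m_I})$ alone; and the remaining cases with $h\mid d\mid m_I$ reduce to the elementary disjoint-union identity $\{e:d\mid e\mid m,\ m_I\nmid e\}\sqcup\{e:h\mid e\mid m,\ d\nmid e\}=\{e:h\mid e\mid m,\ m_I\nmid e\}$ — no term of which lies in $D_I$ — hence hold automatically, whereas the cases with $h\nmid m_I$, or with $h\mid m_I$ and $d\nmid m_I$, are the identity $0=0$ once one checks the relevant product contains a vanishing factor $f_{\aunder,\bunder,e}$ with $e\in D_I$.

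Then I would build the isomorphism. Put $v_I\colonequals\prod_{e:\, m_I\mid e\mid m}(\epsilon_e\tilde f_e)^{e/m_I}\in\calO(L_I)^\times$ and $y\colonequals y_{m_I}v_I$. Combining $y_{m_I}^{m_I}=\prod_{e\mid m,\ m_I\nmid e}(\epsilon_e\tilde f_e)^e$ (no vanishing factors, since $D_I\subseteq\{e:m_I\mid e\}$) with the bookkeeping identities $\prod_{e\mid m}\epsilon_e^e=(-1)^{\sum_{i\notin I}a_i}$ and $\prod_{e\mid m}\tilde f_e^e=\prod_{i\in I}(-x_i)^{a_i}(1-x_i)^{b_i-a_i}$ yields
\[ y^{m_I}=\prod_{e\mid m}(\epsilon_e\tilde f_e)^e=(-1)^{\sum_{i\notin I}a_i}\prod_{i\in I}(-x_i)^{a_i}(1-x_i)^{b_i-a_i}, \]
which is exactly the first equation of \eqref{def: Y a b m I subset}; the conditions $t\prod_{i\in I}x_i=1$ and $x_i\neq0,1$ are the restrictions to $L_I$ of those defining $V$ and $L_I$. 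Conversely, over $Y'_{\aunder,\bunder,m,I}$ define $x_i\mapsto x_i$, $y_d\mapsto0$ for $d\nmid m_I$, and $y_d\mapsto(y/v_I)^{m_I/d}\big/\prod_{e:\, d\mid e\mid m,\, m_I\nmid e}(\epsilon_e\tilde f_e)^{e/d}$ for $d\mid m_I$; these are regular on $Y'_{\aunder,\bunder,m,I}$ (the $v_I$ and the denominators are units there) and satisfy \eqref{eqn:alleqns}$|_{L_I}$ by the case analysis above. A direct check — on the $y_d$ with $d\mid m_I$ this is just the $(h,d)=(d,m_I)$ case of \eqref{eqn:alleqns} — shows the two maps are mutually inverse, and $y_{m_I}\mapsto\zeta y_{m_I}$ corresponds to $y\mapsto\zeta y$, so the isomorphism is compatible with the $\mu_m$-, resp.\ $\mu_{m_I}$-action.

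The only genuinely delicate point is the constant in the displayed equation for $y^{m_I}$. The naive guess — restrict the global regular function $y=y_m f_{\aunder,\bunder,m}$ of Proposition~\ref{prop:openimmers} — fails because $f_{\aunder,\bunder,m}|_{L_I}$ vanishes whenever $m\in D_I$, so one must renormalise by $v_I$; and it is essential that $v_I$ is built from $\epsilon_e\tilde f_e$, not $\tilde f_e$ alone, so that $\prod_{e\mid m}(\epsilon_e\tilde f_e)^e$ collapses to $(-1)^{\sum_{i\notin I}a_i}\prod_{i\in I}(-x_i)^{a_i}(1-x_i)^{b_i-a_i}$ \emph{on the nose}. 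This matters because when $m_I$ is even $-1$ is not an $m_I$-th power in the base ring, so an incorrect constant would produce a genuinely non-isomorphic variety. Everything else is careful divisor bookkeeping — which $f_{\aunder,\bunder,e}|_{L_I}$ vanish, and the set identity above — which is routine but must be carried out with some care.
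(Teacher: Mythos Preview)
Your proof is correct and follows essentially the same approach as the paper's: analyse the $h=1$ cases of \eqref{eqn:alleqns} to see that $y_d$ vanishes precisely when $d\nmid m_I$, then reduce to a single equation in $y_{m_I}$. Where the paper handles the discrepancy between $y_{m_I}^{m_I}$ and the defining equation of $Y'_{\aunder,\bunder,m,I}$ with the parenthetical ``(If $m_I\mid e$ then the twist by $(-1)^e$ can be absorbed into the isomorphism)'', you make this explicit via the unit $v_I$ and the change of variable $y=y_{m_I}v_I$, and you also carry out the verification that the remaining equations with $h\mid d\mid m_I$ (and the $0=0$ cases) hold automatically --- steps the paper leaves to the reader by invoking Proposition~\ref{prop:openimmers}.
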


\begin{proof}
We look back at the defining equations \eqref{eqn:alleqns}.  First, we see that if $h \mid d$ and $y_h=0$, then $y_d=0$.  Let $d \mid m$.  Take $h=1$ to get
\begin{equation} 
\begin{aligned}
y_d^d &= y_1 \prod_{\substack{e \mid m \\ d \nmid e}} f_{\aunder,\bunder,e}(x_1,\dots,x_n)^e \\
&= \prod_{\substack{e \mid m \\ d \nmid e}} \prod_{\substack{i \\ \gcd(m,a_i)=e}} (-x_i)^{a_i} \prod_{\substack{i \\ \gcd(m,b_i-a_i)=e}} (1-x_i)^{b_i-a_i}. 
\end{aligned}
\end{equation}
So $y_d=0$ if and only if $(1-x_i) \mid f_{\aunder,\bunder,e}(x_1,\dots,x_n)$ for some $i \not\in I$ and $e \mid m$ with $d \nmid e$.  By definition, this happens if and only if there exists $i \not \in I$ such that $d \nmid e=\gcd(b_i-a_i,m)$.  By the contrapositive, this says $y_d \neq 0$ if and only if $d \mid \gcd(b_i-a_i,m)$ for all $i \not\in I$, i.e., $d \mid m_I$.   

Let $x_i'=x_i,1$ according as $i \in I$ or not.  Plugging in, this leaves
\[ y_{m_I}^{m_I} = \prod_{\substack{e \mid m \\ m_I \nmid e}} \prod_{\substack{i \\ \gcd(m,a_i)=e}} (-x_i')^{a_i} \prod_{\substack{i \\ \gcd(m,b_i-a_i)=e}} (1-x_i')^{b_i-a_i}. \]
(As confirmation, we see that if $i \not \in I$ then $m_I \mid \gcd(b_i-a_i,m)$, so the only terms $1-x_i'$ that appear have $i \in I$.)  This simplifies to 
\[
y_{m_I}^{m_I} = (-1)^{\sum_{i \not \in I} a_i} \prod_{i \in I} (-x_i)^{a_i} \prod_{i \in I} (1-x_i)^{b_i-a_i}. \]
(If $m_i \mid e$ then the twist by $(-1)^{e}$ can be absorbed into the isomorphism.)  Replacing $m$ by $m_I$, the birational maps in \Cref{prop:openimmers} then define an isomorphism to
$Y_{\aunder_I,\bunder_I,m_I}$.
\end{proof}

\section{Finite field hypergeometric functions} \label{sec:ffhg}

In this section, we recall and compare definitions of finite field analogues of complex hypergeometric functions \cite{Greene, Katz, DM, FLRST, BCM}.

\subsection{Setup}

Let $q=p^r$ be a prime power and let
\begin{equation}
\qq \colonequals q-1.
\end{equation}  
We write $\omega^{1/\qq} \colon \F_q^\times \rightarrow \C^\times$ for a generator of the character group on $\F_q^\times$, so we can write $\omega^\mu$ for $\mu \in \Q$ such that $\qq\mu \in \Z$.  We extend $\omega \colon \F_q \rightarrow \C^\times$ by setting $\omega(0) = 0$ including for the trivial character $\eps$.  Further, let $\Theta \colon \F_q \rightarrow \C^\times$ be a nontrivial (additive) character. 

\begin{defn}\label{Gauss sum def}
For $\alpha \in (1/\qq)\ZZ$, we define the \defi{Gauss sum}
\begin{equation}
    g(\alpha) \colonequals  \sum_{x \in \F_q^\times} \omega^{\alpha}(x) \Theta(x).
\end{equation}
\end{defn}
The Gauss sum takes values in $\Q(\zeta_{q-1},\zeta_q) \subseteq \C$ and is well-defined on $k \in \bigl(\frac{1}{\qq}\ZZ\bigr)/\ZZ$.


\begin{rmk}\label{rmk:Gaussum}
While the definition of the Gauss sum also depends on the choice of additive character $\Theta$, the definitions for the Jacobi sum in~\Cref{def:jacobi}, the definition of the Hecke Grossencharacter in~\eqref{ourheckegross}, and the period normalized hypergeometric function in~\Cref{defn:ourtwisthgm} do not.
\end{rmk}

\begin{rmk}
Note the Gauss sum in this definition sums over the elements in $\F_q^\times$ rather than $\F_q$. The literature in finite field hypergeometric sums is not uniform; there can be a discrepancy when one defines $\eps(0)$ to be nonzero or defines the set of summation in the Gauss sum to include $0$ (over $\F_q$, instead of $\F_q^\times$) in the Gauss sum.  First, some authors impose that $\eps(0) = 0$  \cite{FLRST, DM} while others use the convention that $\eps(0) = 1$ \cite{IR90, BCM, Cohen2}.  Second, some authors will alter the summation in the definition of a Gauss sum to be over the elements in $\F_q$ rather than $\F_q^\times$ (e.g., \cite{IR90,DM} sum over $\F_q$ but \cite{FLRST, BCM} sum over $\F_q^\times$).    
\end{rmk}

With this definition for Gauss sum, we have the following standard properties:
\begin{prop}\label{std gauss formula}
We have that $g(0)=-1$ and, for $\alpha \in (1/\qq)\Z \smallsetminus \Z$,
\begin{equation}\label{gen gauss formula}
g(\alpha) g(-\alpha) = \omega^\alpha(-1) q.
\end{equation}
\end{prop}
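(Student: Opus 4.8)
The plan is to establish the two assertions separately, both by elementary character-sum manipulations.

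For $g(0)$, observe that $\omega^0$ restricted to $\F_q^\times$ is identically $1$, so $g(0) = \sum_{x \in \F_q^\times}\Theta(x)$. Since $\Theta$ is a nontrivial additive character we have $\sum_{x \in \F_q}\Theta(x) = 0$, and subtracting the $x=0$ term (with $\Theta(0) = 1$) gives $g(0) = 0 - 1 = -1$.

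For the product formula, I would expand the double sum and reorganize it multiplicatively. Writing
\[ g(\alpha)g(-\alpha) = \sum_{x,y \in \F_q^\times} \omega^\alpha(x)\omega^{-\alpha}(y)\,\Theta(x+y), \]
substitute $x = uy$ with $u \in \F_q^\times$, which is a bijection of $\F_q^\times \times \F_q^\times$ with itself; since $\omega^\alpha(x)\omega^{-\alpha}(y) = \omega^\alpha(u)$ and $x+y = (u+1)y$, this becomes
\[ g(\alpha)g(-\alpha) = \sum_{u \in \F_q^\times}\omega^\alpha(u)\sum_{y \in \F_q^\times}\Theta\bigl((u+1)y\bigr). \]
The inner sum equals $q-1$ when $u = -1$ and equals $-1$ otherwise (again because $\Theta$ is nontrivial), so the right-hand side is
\[ \omega^\alpha(-1)(q-1) - \sum_{u \in \F_q^\times \smallsetminus \{-1\}}\omega^\alpha(u) = q\,\omega^\alpha(-1) - \sum_{u \in \F_q^\times}\omega^\alpha(u). \]
Finally, since $\alpha \notin \Z$ the character $\omega^\alpha$ is nontrivial on $\F_q^\times$, so the last sum vanishes by orthogonality, leaving $g(\alpha)g(-\alpha) = \omega^\alpha(-1)\,q$, as claimed.

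There is no serious obstacle here — this is a classical identity, and one could equally cite Ireland--Rosen \cite{IR90} or any standard reference. The only points requiring care are bookkeeping with the conventions fixed in \textsection\ref{sec:ffhg}: that the Gauss sum is taken over $\F_q^\times$ rather than $\F_q$, and that $\omega^0(0) = 0$ while $\omega^0(x) = 1$ for $x \neq 0$, which is precisely why the $x = 0$ (equivalently the $\Theta(0)$) contribution must be tracked by hand in both parts of the argument.
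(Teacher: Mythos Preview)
Your proof is correct. The paper itself does not give an argument at all---it simply cites Cohen \cite[Lemma 2.5.8 and Proposition 2.5.9]{Cohen2}---so your write-up is strictly more self-contained: you supply the classical character-sum manipulation (the substitution $x = uy$ and the split according to $u = -1$ or not) that any such reference would unwind to. The only ``difference'' is that you prove what the paper defers, and you already note that a citation would suffice.
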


\begin{proof}
See Cohen \cite[Lemma 2.5.8 and Proposition 2.5.9]{Cohen2}. 
\end{proof}

\subsection{Jacobi sums} \label{sec:jacobi}

We recall a definition for Jacobi sums.

\begin{defn} \label{def:jacobi}
Let $\alpha,\beta \in (1/\qq)\Z$. We define the \defi{Jacobi sum} 
$$
J(\alpha,\beta) = \sum_{x \in \F_q\setminus \{0,1\}} \omega^\alpha(x)\omega^\beta(1-x).
$$
\end{defn}

\begin{rmk}
Again, there are different conventions for the set of summation of a Jacobi sum, which can lead to discrepancies just as in the case of Gauss sums.  We have adopted the convention that $\eps(0)=0$, so the set of summation in \Cref{def:jacobi} can be extended to $x \in \F_q$, adding zero.
\end{rmk}

The rest of this subsection presents identities for Jacobi sums and Gauss sums that we find useful to compute $\#X(\F_q)$ in the next section.

\begin{prop}\label{BCM Jacobi Prop}
Let $\alpha, \beta \in (1/\qq)\Z$.  Then
\begin{equation}
J(\alpha, -\beta) = \frac{g(\alpha)g(-\beta)}{ g(\alpha-\beta)} + \begin{cases}   \omega^{-\beta}(-1) \qq, & \text{ if $\alpha - \beta \in \Z$}; \\ 0, &\text{ otherwise.} \end{cases}
\end{equation}
\end{prop}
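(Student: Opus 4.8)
The plan is to evaluate the Jacobi sum $J(\alpha,-\beta) = \sum_{x \in \F_q, x \neq 0,1} \omega^\alpha(x)\omega^{-\beta}(1-x)$ directly and relate it to Gauss sums via the standard "unfolding" trick. First I would introduce the additive character $\Theta$ and use the orthogonality relation $\frac{1}{q}\sum_{u \in \F_q} \Theta(u(z-w)) = \mathbbm{1}[z=w]$ to decouple the constraint, or more classically, I would start from the product $g(\alpha)g(-\beta) = \sum_{u,v \in \F_q^\times} \omega^\alpha(u)\omega^{-\beta}(v)\Theta(u+v)$, substitute $v = w - u$ (so that $u+v = w$), and split the resulting double sum according to whether $w = 0$ or $w \in \F_q^\times$. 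The $w \neq 0$ part, after the change of variables $u = wx$, factors as $\left(\sum_{x} \omega^\alpha(x)\omega^{-\beta}(1-x)\right)\left(\sum_w \omega^{\alpha-\beta}(w)\Theta(w)\right) = J(\alpha,-\beta)\, g(\alpha-\beta)$, where one must be careful that $\omega(0)=0$ removes the boundary terms $x=0,1$ automatically. The $w = 0$ part forces $v = -u$, giving $\sum_{u \in \F_q^\times}\omega^\alpha(u)\omega^{-\beta}(-u) = \omega^{-\beta}(-1)\sum_u \omega^{\alpha-\beta}(u)$, which is $\omega^{-\beta}(-1)\qq$ if $\alpha - \beta \in \Z$ (the character $\omega^{\alpha-\beta}$ is trivial on $\F_q^\times$) and $0$ otherwise by orthogonality of characters.

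Putting these together yields $g(\alpha)g(-\beta) = J(\alpha,-\beta)\,g(\alpha-\beta) + \delta$, where $\delta = \omega^{-\beta}(-1)\qq$ if $\alpha-\beta \in \Z$ and $\delta = 0$ otherwise. To solve for $J(\alpha,-\beta)$ I would then divide by $g(\alpha-\beta)$; this is legitimate because $g(\alpha-\beta) \neq 0$ always (when $\alpha - \beta \notin \Z$ this follows from $g(\alpha-\beta)g(\beta-\alpha) = \omega^{\alpha-\beta}(-1)q \neq 0$ in \Cref{std gauss formula}, and when $\alpha-\beta \in \Z$ we have $g(\alpha-\beta) = g(0) = -1$). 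This rearrangement gives exactly the claimed formula
\[
J(\alpha,-\beta) = \frac{g(\alpha)g(-\beta)}{g(\alpha-\beta)} + \begin{cases} \omega^{-\beta}(-1)\qq, & \alpha - \beta \in \Z; \\ 0, & \text{otherwise,}\end{cases}
\]
since in the degenerate case $-\delta/g(\alpha-\beta) = -\omega^{-\beta}(-1)\qq/(-1) = \omega^{-\beta}(-1)\qq$.

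I expect the only real subtlety — rather than a genuine obstacle — to be bookkeeping around the conventions $\omega(0) = 0$ and $\eps(0) = 0$ adopted in the paper, and making sure the boundary contributions ($x = 0,1$ in the Jacobi sum, $u = 0$ or $v = 0$ in the Gauss sum product) are consistently excluded or automatically killed; because all sums here are over $\F_q^\times$ or over $\F_q \setminus \{0,1\}$ with the vanishing convention, these boundary terms contribute nothing and the manipulations go through cleanly. Alternatively, if one prefers to avoid even this, the identity is standard (e.g. Berndt--Evans--Williams, or Cohen's book, or Beukers--Cohen--Mellit) and one could simply cite it; but the self-contained Gauss-sum computation above is short enough to include, and it is the approach most consistent with the paper's explicit character-sum methodology used in the following section to compute $\#X(\F_q)$.
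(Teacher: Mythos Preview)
Your argument is correct and is essentially the same as the paper's: expand the product $g(\alpha)g(-\beta)$, reparametrize by the sum $w=u+v$, separate the $w=0$ contribution (which gives the $\omega^{-\beta}(-1)\qq$ correction exactly when $\alpha-\beta\in\Z$), and recognize the $w\neq 0$ part as $g(\alpha-\beta)J(\alpha,-\beta)$ after rescaling. If anything, you are slightly more careful than the paper in justifying that $g(\alpha-\beta)\neq 0$ in both cases before dividing.
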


\begin{proof}
We give a proof that is a straightforward modification of Ireland--Rosen \cite[\textsection 8.3, Theorem 1]{IR90} once we carefully track the consequences of taking $\eps(0)=0$ instead.  

We have the following expansion of the product $g(\alpha)g(-\beta)$:
\begin{equation}\begin{aligned}
g(\alpha)g(-\beta) &= \left(\sum_{x \in \F_q^\times} \omega^{\alpha}(x) \Theta(x)\right)\left( \sum_{y \in \F_q^\times} \omega^{-\beta}(y) \Theta(y)\right) \\
	&= \sum_{x,y\in\F_q^\times} \omega^{\alpha}(x) \omega^{-\beta}(y) \Theta(x+y) 
	= \sum_{u\in \F_q} \left(\sum_{\substack{x,y\in\F_q^\times \\ x+y=u}} \omega^{\alpha}(x) \omega^{-\beta}(y) \right)\Theta(u)
\end{aligned}\end{equation}
Note that when $u=0$ we can use the fact that $y = -x$ to obtain the following
\begin{equation}\begin{aligned}
\sum_{\substack{x,y\in\F_q^\times \\ x+y=0}} \omega^{\alpha}(x) \omega^{-\beta}(y) = \omega^{-\beta}(-1)\sum_{x\in\F_q^\times} \omega^{\alpha-\beta}(x) 
	= \begin{cases}  \omega^{-\beta}(-1) \qq & \text{ if $\alpha - \beta \in \Z$} \\ 0 &\text{ otherwise.}\end{cases}
\end{aligned}\end{equation}
Next, if $u \neq 0$ then we can rescale by a factor of $u$ to get the following
\begin{equation}\begin{aligned}
\sum_{\substack{x,y\in\F_q^\times \\ x+y=u}} \omega^{\alpha}(x) \omega^{-\beta}(y) = \sum_{\substack{x', y' \in \F_q^\times \\ x'+y' =1}} \omega^{\alpha}(ux') \omega^{-\beta}(uy') 
	= \omega^{\alpha - \beta}(u) \sum_{\substack{x', y' \in \F_q^\times \\ x'+y' =1}} \omega^{\alpha}(x') \omega^{-\beta}(y').
\end{aligned}\end{equation}
So then, if $\alpha - \beta \notin \Z$, we can simplify to get
\begin{equation}\begin{aligned}
g(\alpha)g(-\beta) =\sum_{u\in \F_q^\times} \omega^{\alpha - \beta} (u)\left(\sum_{\substack{x,y\in\F_q^\times \\ x+y=1}} \omega^{\alpha}(x) \omega^{-\beta}(y) \right)\Theta(u)
	= g(\alpha - \beta) \sum_{\substack{x,y\in\F_q^\times \\ x+y=1}} \omega^{\alpha}(x) \omega^{-\beta}(y).
\end{aligned}\end{equation}
Changing the summation to be over one variable, dividing by $g(\alpha - \beta)$, and using the fact that $g(0) = -1$, we obtain
\begin{equation}
\frac{g(\alpha)g(-\beta)}{ g(\alpha-\beta)} =  \sum_{x \in \F_q \setminus\{0,1\} }\omega(x)^{\alpha}\omega(1-x)^{\beta} - \begin{cases}   \omega^{-\beta}(-1) \qq, & \text{ if $\alpha - \beta \in \Z$}; \\ 0, &\text{ otherwise.} \end{cases}
\end{equation}
\end{proof}

\begin{cor}\label{cor:case by case Jacobi}
We have
\begin{equation}\label{case by case Jacobi}
\frac{g(\alpha)g(-\beta)}{ g(\alpha-\beta)} = \begin{cases}  J(\alpha, -\beta) & \text{ if $\alpha - \beta \notin \Z$}; \\ 
	-1 & \text{ if $\alpha \in \Z$ or $\beta \in \Z$}; \\
	-\omega^{\alpha}(-1)q & \text{ if $\alpha - \beta \in \Z$ but $\alpha\notin\Z$}.
\end{cases}
\end{equation}
\end{cor}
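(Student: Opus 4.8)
The plan is to derive this formally from Proposition~\ref{BCM Jacobi Prop} together with the elementary Gauss sum facts in Proposition~\ref{std gauss formula}, by a case analysis on whether $\alpha$, $\beta$, $\alpha-\beta$ lie in $\Z$. Throughout I would use two standing facts recorded earlier: first, that $g$ is well defined on $(1/\qq)\Z/\Z$ (the remark after Definition~\ref{Gauss sum def}), so $\alpha\equiv\alpha'\psmod{\Z}$ forces $g(\alpha)=g(\alpha')$; and second, that Gauss sums never vanish, since $g(0)=-1$ and $g(\alpha)g(-\alpha)=\omega^\alpha(-1)q\neq 0$ for $\alpha\notin\Z$ by \eqref{gen gauss formula}. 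In particular dividing by $g(\alpha-\beta)$ is always legitimate, so all three right-hand sides make sense.

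The three cases then go as follows. If $\alpha-\beta\notin\Z$, the correction term in Proposition~\ref{BCM Jacobi Prop} vanishes, giving $g(\alpha)g(-\beta)/g(\alpha-\beta)=J(\alpha,-\beta)$, which is the first line. If $\alpha\in\Z$ (so $\omega^\alpha=\eps$ and $g(\alpha)=g(0)=-1$), then $g(\alpha-\beta)=g(-\beta)$ because $\alpha-\beta\equiv-\beta\psmod{\Z}$, so the ratio is $(-1)g(-\beta)/g(-\beta)=-1$; the subcase $\beta\in\Z$ is symmetric, using $g(-\beta)=g(0)=-1$ and $g(\alpha-\beta)=g(\alpha)$, again yielding $-1$ (and $g(\alpha)\neq 0$ by the standing fact). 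Finally, if $\alpha-\beta\in\Z$ but $\alpha\notin\Z$, then $g(\alpha-\beta)=g(0)=-1$ and $g(-\beta)=g(-\alpha)$ since $-\beta\equiv-\alpha\psmod{\Z}$, so by \eqref{gen gauss formula} the ratio equals $-g(\alpha)g(-\alpha)=-\omega^\alpha(-1)q$, the third line.

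There is no genuine obstacle here; the only points needing a moment's attention are that the three listed hypotheses are exhaustive (if $\alpha-\beta\notin\Z$ use line one; if $\alpha-\beta\in\Z$ then either $\alpha\in\Z$, whence also $\beta\in\Z$, or $\alpha\notin\Z$) and that where the hypotheses overlap the values agree. The only overlap is line one against line two, occurring when exactly one of $\alpha,\beta$ lies in $\Z$; there one must check $J(\alpha,-\beta)=-1$, which follows from the defining sum (taking $\alpha\in\Z$, substitute $u=1-x$ to get $\sum_{u\neq 0,1}\omega^{-\beta}(u)=\sum_{u\in\F_q^\times}\omega^{-\beta}(u)-1=-1$ since $\beta\notin\Z$), and symmetrically if $\beta\in\Z$. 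Lines one and three are disjoint, and lines two and three are disjoint since line three requires $\alpha\notin\Z$ while the $\beta\in\Z$ half of line two together with $\alpha-\beta\in\Z$ would force $\alpha\in\Z$.
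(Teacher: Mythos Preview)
Your proof is correct and follows essentially the same approach as the paper: the first case from Proposition~\ref{BCM Jacobi Prop}, the latter two from the elementary identities in Proposition~\ref{std gauss formula}. Your additional consistency check for the overlap between the first two cases (verifying $J(\alpha,-\beta)=-1$ when exactly one of $\alpha,\beta$ is integral) is a nice touch that the paper leaves implicit.
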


\begin{proof}
The first case is \Cref{BCM Jacobi Prop} and the latter two cases follow immediately from \Cref{std gauss formula}. 
\end{proof}

\begin{cor}\label{rest for degen case}
Let $\alpha, \beta \in \Q/\Z$ and suppose that $\alpha- \beta \in \Z$, but $\alpha \notin\Z$. Then
$$
J(\alpha, -\beta) = -\omega^{\alpha}(-1) = q^{-1} \frac{g(\alpha)g(-\beta)}{g(\alpha-\beta)}.
$$
\end{cor}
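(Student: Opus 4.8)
The plan is to deduce both equalities directly from \Cref{BCM Jacobi Prop} and \Cref{cor:case by case Jacobi}, so that the only real content is tracking the root of unity $\omega^{\alpha}(-1)$. The right-hand equality is immediate: under the hypotheses $\alpha-\beta\in\Z$ and $\alpha\notin\Z$, the third case of \Cref{cor:case by case Jacobi} gives $g(\alpha)g(-\beta)/g(\alpha-\beta) = -\omega^{\alpha}(-1)q$, so multiplying by $q^{-1}$ yields $-\omega^{\alpha}(-1)$. Thus the substance is the left-hand equality $J(\alpha,-\beta) = -\omega^{\alpha}(-1)$.

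First I would record the elementary fact that makes the degeneracy hypothesis usable: since $\omega^{\mu}$ depends only on $\mu$ modulo $\Z$, the assumption $\alpha-\beta\in\Z$ forces $\omega^{\beta}=\omega^{\alpha}$ as characters of $\F_q^\times$, and hence $\omega^{-\beta}(-1)=\omega^{-\alpha}(-1)=\omega^{\alpha}(-1)$, the last equality because $\omega^{\alpha}(-1)\in\{\pm1\}$. Then I would apply \Cref{BCM Jacobi Prop} in the case $\alpha-\beta\in\Z$, namely
\[
J(\alpha,-\beta) = \frac{g(\alpha)g(-\beta)}{g(\alpha-\beta)} + \omega^{-\beta}(-1)\,\qq = -\omega^{\alpha}(-1)q + \omega^{\alpha}(-1)(q-1) = -\omega^{\alpha}(-1),
\]
where the middle step combines the computation of $g(\alpha)g(-\beta)/g(\alpha-\beta)$ from the first paragraph with $\omega^{-\beta}(-1)=\omega^{\alpha}(-1)$. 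Together with the first paragraph this gives the full chain of equalities.

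There is essentially no obstacle here: the statement is a one-line substitution into the two cited results. The only point that deserves a moment's attention is the sign identity $\omega^{-\beta}(-1)=\omega^{\alpha}(-1)$, which quietly uses both that $\omega^{\alpha}(-1)$ is a square root of unity and that $\alpha-\beta\in\Z$; and one should note that $\alpha\notin\Z$ (equivalently, $\omega^{\alpha}$ nontrivial) is precisely what licenses the appeal to \Cref{std gauss formula} hidden inside \Cref{cor:case by case Jacobi}.
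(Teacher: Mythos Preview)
Your proof is correct and follows essentially the same approach as the paper's: both combine \Cref{BCM Jacobi Prop} with \Cref{cor:case by case Jacobi}, using the observation that $\omega^{-\beta}(-1)=\omega^{\alpha}(-1)$ under the hypothesis $\alpha-\beta\in\Z$, to collapse $-\omega^{\alpha}(-1)q + \omega^{\alpha}(-1)\qq$ to $-\omega^{\alpha}(-1)$. Your write-up is slightly more explicit about why $\omega^{\alpha}(-1)$ is its own inverse, but the argument is otherwise identical.
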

\begin{proof}
Since $\alpha-\beta \in \Z$, we have $\omega^{\alpha}(-1) = \omega^{-\beta}(-1)$. By combining \Cref{BCM Jacobi Prop} and \Cref{cor:case by case Jacobi}, we compute 
\[ J(\alpha, -\beta)  = -\omega^{\alpha}(-1)q +    \omega^{-\beta}(-1)\qq = \omega^{\alpha}(-1)(-q + \qq) = -\omega^{\alpha}(-1). \]
Applying \Cref{cor:case by case Jacobi} again proves the second identity.
\end{proof}

 The following lemma yields a relation for Jacobi sums induced that we use in the next section. This identity is known (e.g., see \cite[Lemma 2.8]{FLRST}), but we present a short geometric proof using a M\"obius transformation.


\begin{lem}\label{mobius trick}
    Let $\alpha, \beta\in (1/\qq)\Z$. We have 
    $$
    \sum_{x \in \F_q\setminus\{0,1\}} \omega^\alpha(x) \omega^\beta(1-x) =\sum_{x\in\F_q \setminus\{0,1\}} \omega^{\alpha}(-x) \omega^{-\beta-\alpha}(1-x).
    $$
\end{lem}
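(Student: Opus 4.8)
The plan is to prove Lemma~\ref{mobius trick} by applying the M\"obius transformation $M(x) = x/(x-1)$ (equivalently $1-1/x$), which is an involution fixing $\infty$ if we track it carefully; in fact the cleanest choice is $x \mapsto x/(x-1)$, since this sends $1 \mapsto \infty$, $0 \mapsto 0$, $\infty \mapsto 1$, so it permutes $\{0,1,\infty\}$ and hence restricts to a bijection of $\F_q \smallsetminus \{0,1\}$ onto itself. First I would substitute $x \rightsquigarrow x/(x-1)$ into the left-hand sum, so that the summand $\omega^\alpha(x)\omega^\beta(1-x)$ becomes $\omega^\alpha\!\bigl(x/(x-1)\bigr)\,\omega^\beta\!\bigl(1 - x/(x-1)\bigr)$, and then simplify each factor using multiplicativity of $\omega$ together with the convention $\omega(0)=0$ (so that the excluded points cause no trouble).

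The key computation is purely algebraic: $1 - x/(x-1) = \bigl((x-1) - x\bigr)/(x-1) = -1/(x-1)$, while $x/(x-1) = -x/(1-x)$. Hence, using $\omega^\alpha(uv) = \omega^\alpha(u)\omega^\alpha(v)$ and $\omega^\alpha(1/w) = \omega^{-\alpha}(w)$, the summand transforms to
\[
\omega^\alpha(-x)\,\omega^{-\alpha}(1-x)\,\cdot\,\omega^\beta(-1)\,\omega^{-\beta}(x-1) = \omega^\alpha(-x)\,\omega^{-\alpha}(1-x)\,\omega^{-\beta}(1-x),
\]
where in the last step I absorb $\omega^\beta(-1)\omega^{-\beta}(x-1) = \omega^{-\beta}\bigl((x-1)(-1)\bigr) = \omega^{-\beta}(1-x)$. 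Combining the two powers of $\omega$ in $1-x$ gives $\omega^{-\alpha-\beta}(1-x)$, so the summand is exactly $\omega^\alpha(-x)\,\omega^{-\alpha-\beta}(1-x)$, which is the summand of the right-hand side; since the substitution is a bijection of the index set, the two sums are equal.

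I would present this as a short direct computation, taking care to state once that all identities $\omega^\alpha(uv)=\omega^\alpha(u)\omega^\alpha(v)$ etc.\ hold including when a factor vanishes because $\omega(0)=0$ kills both sides, and that $M$ being a bijection of $\F_q\smallsetminus\{0,1\}$ is what licenses the reindexing. The only genuine subtlety — and the one place an error could creep in — is bookkeeping the signs and the direction of the M\"obius map (there are several involutions of $\PP^1$ permuting $\{0,1,\infty\}$, and only the right one produces precisely the exponents $\alpha$ on $-x$ and $-\beta-\alpha$ on $1-x$); once the correct map $x\mapsto x/(x-1)$ is fixed, everything is forced. This mirrors exactly the technique already used in the proof of Lemma~\ref{another Jacobi computation}, so no new ideas are needed.
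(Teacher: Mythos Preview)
Your proof is correct and is essentially identical to the paper's: both apply the M\"obius involution $M(x)=x/(x-1)$, observe that it permutes $\{0,1,\infty\}$ and hence bijects $\F_q\smallsetminus\{0,1\}$ onto itself, and then simplify $\omega^\alpha(M(x))\,\omega^\beta(1-M(x))$ using multiplicativity to obtain $\omega^\alpha(-x)\,\omega^{-\alpha-\beta}(1-x)$. Your write-up is slightly more explicit about the sign bookkeeping, but the argument is the same.
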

\begin{proof}
We apply the M\"obius transformation $M(x) = x/(x-1)$ to the left hand side to rearrange the sum. Note the summing indexation does not change as $M(\{0,1,\infty\}) = \{0,1,\infty\}$. After computing that $1-M(x) = (1-x)^{-1}$ under this transformation, we compute 
\begin{align*}
\sum_{x\in \F_q\setminus\{0,1\}} \omega^\alpha(x) \omega^\beta(1-x) &= \sum_{x\in\F_q\setminus\{0,1\}} \omega^\alpha(x/(x-1)) \omega^{-\beta}(1-x) \\
    &=  \sum_{x\in\F_q \setminus\{0,1\}} \omega^\alpha(-x) \omega^{-\beta-\alpha}(1-x).\qedhere
\end{align*}
\end{proof}

\subsection{Hecke Grossencharacters} \label{sec:gross}
 
In this section, we briefly review how Gauss sums and Jacobi sums yield Hecke Grossencharacters (or Gr\"ossencharaktere) over cyclotomic fields.  This idea goes back to Weil \cite{Weil} and has been investigated by many authors; see Watkins \cite{Watkins} for an explicit version.  

Although the theory is quite general, we focus on the case of interest.  Let $m \in \Z_{\geq 1}$.  We work with $K \colonequals \Q(\zeta_m)$ with ring of integers $\Z_K=\Z[\zeta_m]$.  We consider $\Q(\zeta_m) \subset \C$ by taking $\zeta_m=\exp(2\pi i/m)$.  Let $\frakp \subseteq \Z_K$ be a prime ideal that is coprime to $m$, and write 
$q = \Nm(\frakp)$ for its norm and let $\F_\frakp \colonequals \Z_K/\frakp$ for its residue field.  Denote by
\begin{equation}
\begin{aligned} 
\chi_\frakp^{1/m} \colon \F_\frakp^\times \to \langle \zeta_m \rangle 
\end{aligned}
\end{equation}
the character uniquely characterized by the condition that
\[ \chi_\frakp^{1/m}(x) \equiv x^{(q-1)/m} \pmod{\frakp}. \]
We extend by zero to $\F_\frakp$.  The character $\chi_\frakp^{1/m}$ has order $m$, so under a choice of isomorphism $\Z_K/\frakp \simeq \F_q$ it corresponds to $\omega^{1/m}$ (see \Cref{rmk:Gaussum}).  

We may therefore extend our notation to this setting as follows.  Given $\alpha \in (1/m)\Z$, we define the \defi{Gauss sum} for $\frakp$ as
\begin{equation} 
g_\frakp(\alpha) \colonequals \sum_{x \in \Z_K/\frakp} \chi_\frakp^\alpha(x)\Theta(x).
\end{equation}
And for $\pmb{\alpha},\pmb{\beta} \subset (1/m)\Z$ satisfying $\#\pmb{\alpha}=\#\pmb{\beta}=n$, we define
\begin{equation} \label{ourheckegross}
\psi_{\pmb{\alpha},\pmb{\beta}}(\frakp) \colonequals \prod_{i=1}^n \frac{g_\frakp(\alpha_i)g_\frakp(-\beta_i)}{g_\frakp(\alpha_i-\beta_i)}. 
\end{equation}

\begin{thm} \label{thm:heckechar}
The assignment $\frakp \mapsto \psi_{\pmb{\alpha},\pmb{\beta}}(\frakp)$ defines a Hecke Grossencharacter $\psi_{\pmb{\alpha},\pmb{\beta}}$ for $\Q(\zeta_m)$ with the property that if $\sigma_k(\zeta_m)=\zeta_m^k$ for $k \in (\Z/m\Z)^\times$, then $\sigma_k (\psi_{\alpha,\beta}) = \psi_{k\alpha,k\beta}$.
\end{thm}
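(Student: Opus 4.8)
The plan is to prove that $\psi_{\pmb{\alpha},\pmb{\beta}}$ is a Hecke Grossencharacter by exhibiting it as a suitable product of the classical Jacobi-sum Grossencharacters of Weil, for which the defining properties (multiplicativity, algebraicity, and the functional equation / continuity on ideles) are already established in the literature (Weil \cite{Weil}, Watkins \cite{Watkins}). First I would use \Cref{cor:case by case Jacobi} to rewrite each local factor $g_\frakp(\alpha_i)g_\frakp(-\beta_i)/g_\frakp(\alpha_i-\beta_i)$ at a prime $\frakp \nmid m$ as (up to the elementary sign/power corrections recorded there) a Jacobi sum $J_\frakp(\alpha_i,-\beta_i)$ when $\alpha_i-\beta_i\notin\Z$, and as the explicit quantity $-1$ or $-\chi_\frakp^{\alpha_i}(-1)q$ in the degenerate cases. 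Weil's theorem says that for fixed $\alpha,\beta\in(1/m)\Z$ with $\alpha,\beta,\alpha-\beta\notin\Z$, the map $\frakp\mapsto J_\frakp(\alpha,-\beta)$ extends to a Hecke Grossencharacter of $\Q(\zeta_m)$ of a specified infinity-type, and $\frakp\mapsto \chi_\frakp^{\alpha}(-1)$ is a finite-order character, and $\frakp\mapsto q=\Nm\frakp$ is the norm character; products and quotients of Grossencharacters are Grossencharacters, so $\psi_{\pmb\alpha,\pmb\beta}$ is one.

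The key steps, in order: (1) fix the dictionary between $\chi_\frakp^{1/m}$ and $\omega^{1/m}$ under an isomorphism $\Z_K/\frakp\simeq\F_q$, noting that the Jacobi sum $J_\frakp(\alpha,-\beta)\in\Z_K$ is independent of that choice (it lies in $\Z[\zeta_m]$ by the congruence characterization of $\chi_\frakp$), so the product $\psi_{\pmb\alpha,\pmb\beta}(\frakp)$ is a well-defined element of $\Z_K$; (2) invoke Weil's construction of the Jacobi-sum Grossencharacter attached to a single pair $(\alpha,-\beta)$ — I would cite Watkins \cite{Watkins} for the precise normalization, recording the conductor (supported on primes dividing $m$) and the infinity-type (a $\Z$-linear combination of the archimedean characters $z\mapsto (z^{(j)})$ read off from the fractional parts of $\alpha,\beta,\alpha-\beta$); (3) handle the degenerate factors, where $g_\frakp(\alpha)g_\frakp(-\beta)/g_\frakp(\alpha-\beta)$ equals $-1$ (a finite-order, indeed trivial-up-to-sign character) or $-\chi_\frakp^{\alpha}(-1)q$ (finite-order character times the norm), both manifestly Grossencharacters; (4) multiply the $n$ factors together, so that $\psi_{\pmb\alpha,\pmb\beta}$ is a finite product of Grossencharacters for $\Q(\zeta_m)$, hence a Grossencharacter; (5) verify the Galois-equivariance $\sigma_k(\psi_{\pmb\alpha,\pmb\beta})=\psi_{k\pmb\alpha,k\pmb\beta}$: since $\sigma_k$ acts on $\langle\zeta_m\rangle$ by $\zeta_m\mapsto\zeta_m^k$ and sends the local character $\chi_\frakp^{1/m}$ associated to $\frakp$ to the one associated to $\sigma_k^{-1}(\frakp)$ (equivalently, $\sigma_k$ applied to the Gauss/Jacobi sum value replaces $\chi_\frakp$ by $\chi_\frakp^k$ because $\Theta$ is Galois-stable up to the usual care), one gets $\sigma_k(g_\frakp(\alpha))=g_\frakp(k\alpha)$ up to the standard bookkeeping, and the claim follows factor by factor.

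I expect the main obstacle to be step (5) together with the precise matching of normalizations in step (2): one must be careful that "$\sigma_k$ applied to a Gauss sum" corresponds to raising the multiplicative character to the $k$-th power while keeping the additive character fixed — this is exactly the subtlety flagged in \Cref{rmk:Gaussum}, and the cleanest route is to note that $g_\frakp(\alpha)$ for $\alpha\in(1/m)\Z$ is computed with $\chi_\frakp^\alpha$, which is canonically attached to $\frakp$ (no choice of isomorphism), so applying $\sigma_k\in\Gal(\Q(\zeta_m)/\Q)$ to the complex number $g_\frakp(\alpha)$ — which a priori lies in $\Q(\zeta_m,\zeta_p)$ — permutes the Gauss sums according to the action on $\langle\zeta_m\rangle$, giving $g_\frakp(\alpha)\mapsto g_\frakp(k\alpha)$ once one also adjusts the additive character, the adjustment being absorbed because $g_\frakp$ is independent of $\Theta$ (\Cref{Gauss sum def} and the remark after it). Assembling these, the equivariance drops out of \eqref{ourheckegross} term by term, and the conductor/infinity-type computations are routine given Watkins' tables. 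The genuinely new content of this theorem over the cited literature is merely that the particular combination \eqref{ourheckegross} — a ratio of three Gauss sums per index, including degenerate indices — still assembles into a single Grossencharacter, which follows formally once the degenerate factors are identified via \Cref{cor:case by case Jacobi}.
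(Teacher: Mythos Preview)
Your proposal is correct and follows essentially the same approach as the paper: both reduce the claim to the classical Weil--Watkins result that Jacobi sums define Hecke Grossencharacters for $\Q(\zeta_m)$, and both handle the Galois equivariance via the observation that $\sigma_k$ sends $\chi_\frakp$ to $\chi_\frakp^k$ while fixing the additive character. The paper's proof is in fact just a bare citation to Watkins \cite[\S2.2]{Watkins} together with the remark $\sigma_k\circ\omega=\omega^k$, $\sigma_k\circ\Theta=\Theta$; your case split via \Cref{cor:case by case Jacobi} into nondegenerate Jacobi-sum factors and degenerate norm/sign factors is a more explicit unpacking of what is inside that reference, but not a different route.
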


\begin{proof}
See Watkins \cite[\S 2.2]{Watkins}.  Galois equivariance can also be proven directly from the definitions using the fact that $\sigma_k\circ \omega = \omega^k$ and $\sigma_k\circ \Theta = \Theta$ (analogously to \cite[Lemma 4.9.1(a)]{DKSSVW20}).  
\end{proof}

\subsection{A formula for (period-normalized) hypergeometric functions} \label{sec:hypgem}

With the analogy between Gauss sums and the $\Gamma$-function in mind, but thinking in terms of periods, we make the following definition.

\begin{defn} \label{defn:ourtwisthgm}
Let $\pmb{\alpha},\pmb{\beta} \subset (1/\qq)\Z$ be hypergeometric parameters with $\#\pmb{\alpha}=\#\pmb{\beta}=n$.  For $t \in \F_q^\times$, we define the \defi{(period-normalized) hypergeometric sum} by
\begin{equation}\label{def:Hpsi}
H(\pmb{\alpha}, \pmb{\beta}, t) \colonequals  \frac{-1}{\qq} \sum_{\m=0}^{\qq -1} G(\pmb{\alpha}+ \m/\qq, \pmb{\beta}+ \m/\qq)\omega((-1)^n t)^{\m/\qq},
\end{equation}
where
\begin{equation}\label{alt Gtwist}
G(\pmb{\alpha}, \pmb{\beta}) \colonequals \prod_{i=1}^n \frac{g( \alpha_i)g(-\beta_i)}{g(\alpha_i - \beta_i)}.
\end{equation}
\end{defn}

In \Cref{defn:ourtwisthgm}, note that we do not require the standard assumption in complex hypergeometric functions that $\beta_n=1$.  However, one may reduce to this case by introducing a twist, shifting all parameters by $1-\beta_n$ as in the complex case, according to the following lemma.  

\begin{lem}\label{lem:shift}
If $\qq\delta \in \Z$, then 
\[ H_q(\pmb{\alpha}+\delta, \pmb{\beta}+\delta, t) = \omega((-1)^nt)^{\delta} H_q(\pmb{\alpha}, \pmb{\beta}, t). \]
\end{lem}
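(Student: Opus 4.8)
The plan is to prove the identity directly from \Cref{defn:ourtwisthgm}, by unfolding both sides and comparing the two sums over $\m$. Writing $\qq=q-1$ and unfolding the left-hand side,
\[
H_q(\pmb{\alpha}+\delta,\pmb{\beta}-\delta,t)=\frac{1}{\qq}\sum_{\m=0}^{\qq-1}G\!\left(\pmb{\alpha}+\delta+\tfrac{\m}{\qq},\ \pmb{\beta}-\delta+\tfrac{\m}{\qq}\right)\omega((-1)^n t)^{\m/\qq},
\]
with $G$ the product of Gauss‑sum ratios in \eqref{alt Gtwist}. Two facts drive everything. First, as recorded just after \Cref{Gauss sum def}, a Gauss sum $g(\gamma)$ — and hence $G$ — as well as the character $\omega^\gamma$ depend on their arguments only through their classes in $(\tfrac1\qq\Z)/\Z$, so shifting a parameter by an integer changes nothing. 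Second, since $\qq\delta\in\Z$, the assignment $\m\mapsto\nu\colonequals\m+\qq\delta$ is a bijection of $\Z/\qq\Z$, under which $\tfrac{\m}{\qq}$ is replaced by $\tfrac{\nu}{\qq}-\delta$.

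The main step is to carry out this reindexing and simplify. In the arguments of $G$, each $+\tfrac{\m}{\qq}=\tfrac{\nu}{\qq}-\delta$ absorbs the adjacent $+\delta$ in the numerator arguments built from $\pmb{\alpha}$, producing the clean terms $g(\alpha_i+\tfrac{\nu}{\qq})$; what remains is a shift by $2\delta$ in the numerator arguments built from $\pmb{\beta}$ and in the difference arguments $\alpha_i-\beta_i$ in the denominators of \eqref{alt Gtwist}, together with a stray factor $\omega((-1)^n t)^{-\delta}$ pulled out of the last factor of \eqref{def:Hpsi}. When $\delta\in\Z$ these residuals are literally invisible — the integer shifts do nothing and $\omega((-1)^n t)^{-\delta}$ is trivial — so the two sums agree term by term and no reindexing is even needed; the content of the lemma is to show that the same cancellation survives for every $\delta$ with $\qq\delta\in\Z$.

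I expect the main obstacle to be exactly this last point: reconciling the residual $2\delta$‑shift of the denominator parameters with the stray character factor $\omega((-1)^n t)^{-\delta}$, and checking that the specific opposite‑sign pattern $(\pmb{\alpha}+\delta,\pmb{\beta}-\delta)$ of the shift (as opposed to a same‑sign shift) is precisely what is needed for these to cancel against one another. Granting that, the reindexed sum becomes $\tfrac1\qq\sum_\nu G(\pmb{\alpha}+\tfrac{\nu}{\qq},\pmb{\beta}+\tfrac{\nu}{\qq})\,\omega((-1)^n t)^{\nu/\qq}=H_q(\pmb{\alpha},\pmb{\beta},t)$, which is the claim.
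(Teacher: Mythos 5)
Your plan is the same substitution $\mu\mapsto\nu=\mu+\qq\delta$ that the paper's one-line proof invokes, and your bookkeeping of what the substitution leaves behind is accurate: the $\pmb{\alpha}$-arguments come out clean, while the $\pmb{\beta}$-numerator arguments and the $\mu$-independent denominators $g(\alpha_i-\beta_i+2\delta)$ retain a $2\delta$-shift, and a factor $\omega((-1)^n t)^{-\delta}$ is pulled out of the character. The genuine gap is that you never establish the cancellation of these residuals---you explicitly ``grant'' it---and it cannot be established. The residual $2\delta$-shift sits inside Gauss sums and is independent of $t$, whereas the stray factor is a $t$-dependent root of unity, so no cancellation identical in $t$ is possible; worse, the untwisted opposite-sign identity you are trying to reach fails outright. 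Take $q=5$, $n=1$, $\pmb{\alpha}=(1/4)$, $\pmb{\beta}=(0)$, $\delta=1/4$, $t=2$, with $\omega^{1/4}(2)=i$: evaluating \eqref{def:Hpsi} term by term (each summand is either $g(0)=-1$ or a Jacobi sum, e.g.\ $J(\tfrac12,-\tfrac14)=1-2i$ and $J(\tfrac12,\tfrac14)=1+2i$) gives $H_5(\tfrac14,0,2)=-1$ while $H_5(\tfrac12,\tfrac34,2)=i$; the dependence on the choice of $\omega^{1/4}$ is only up to conjugation, which cannot identify $-1$ with $\pm i$. So no completion of your argument can prove the statement as printed.

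What the reindexing does prove is the same-sign shift law with an explicit twist,
\[ H_q(\pmb{\alpha}+\delta,\pmb{\beta}+\delta,t)=\omega\bigl((-1)^n t\bigr)^{-\delta}\,H_q(\pmb{\alpha},\pmb{\beta},t), \]
equivalently $H_q(\pmb{\alpha}+\delta,\pmb{\beta}-\delta,t)=\omega((-1)^n t)^{-\delta}H_q(\pmb{\alpha},\pmb{\beta}-2\delta,t)$: for the same-sign shift the denominators $g(\alpha_i-\beta_i)$ are untouched, so the substitution closes up with only the character factor left over (in the example above, indeed $H_5(\tfrac12,\tfrac14,2)=-i=\omega^{-1/4}(-2)\,H_5(\tfrac14,0,2)$). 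This twisted, same-sign version is evidently what is intended: the sentence before the lemma speaks of reducing to $\beta_n=1$ ``by introducing a twist, shifting all parameters by $1-\beta_n$,'' exactly as in \Cref{lem:shiftinga1d}, and the paper's proof---the very substitution you propose---yields precisely the displayed identity and not the untwisted one with $\pmb{\beta}-\delta$. So the obstacle you flagged is not something to reconcile but a sign/twist discrepancy in the statement itself; rewrite your argument to conclude the displayed twisted identity (the case $\delta\in\Z$ being trivial), rather than granting a cancellation that is false.
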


\begin{proof}
Immediate from reordering the sum, replacing $\mu \leftarrow \mu + \qq\delta$.
\end{proof}

\begin{rmk} \label{rmk:whyperiodtwist}
\Cref{defn:ourtwisthgm} is called \emph{period-normalized} because it takes into account the factor $\gamma_{\pmb{\alpha},\pmb{\beta}}$ in Euler's integral formula (\Cref{lem:euler}), following the convention of Fuselier--Long--Ramakrishna--Swisher--Tu \cite{FLRST}.  It is a twist of the definition in Beukers--Cohen--Mellit \cite{BCM} by (the value of) a Hecke Grossencharacter over $\Q(\zeta_m)$ (\Cref{thm:heckechar}), more precisely we have
$$
H(\pmb{\alpha}, \pmb{\beta}, t) = G(\pmb{\alpha},\pmb{\beta}) H^{\text{BCM}}(\pmb{\alpha},\pmb{\beta}, t).
$$

\Cref{defn:ourtwisthgm} depends on pairing up the numerator and denominator parameters (retaining simultaneous permutation by $S_n$), whereas the definition given by Beukers--Cohen--Mellit \cite{BCM} allows permuting numerator and denominator parameters separately, by $S_n \times S_n$.  On the other hand, our definition has a simpler shift given in \Cref{lem:shift} and gives us the simplest theorem statement.  Both normalizations have their advantages and reflect the fact that one can always twist hypergeometric motives.  For a complete comparison with the many other versions of finite field hypergeometric functions given in the literature, we refer the reader to \cite[\S 4.4]{FLRST}.
\end{rmk}

Recall the canonical isomorphism $\Gal(\Q(\zeta_m)\,|\,\Q) \cong (\Z/m\Z)^\times$.  Let $H$ be the subgroup of $k \in (\Z/m\Z)^\times$ such that $k\pmb{\alpha},k\pmb{\beta}$ is a permutation of $\pmb{\alpha},\pmb{\beta}$ (simultaneous reordering), and let $K_{\pmb{\alpha},\pmb{\beta}} = \Q(\zeta_m)^H \subseteq \Q(\zeta_m) \subseteq \C^\times$ be the subfield fixed under $H$.  

\begin{lem} \label{lem:fofd}
We have $H_q(\pmb{\alpha},\pmb{\beta},t) \in K_{\pmb{\alpha},\pmb{\beta}}$ and $H_q(p\pmb{\alpha},p\pmb{\beta},t) = H_q(\pmb{\alpha},\pmb{\beta},t^p)$.
\end{lem}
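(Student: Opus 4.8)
\textbf{Proof proposal for Lemma~\ref{lem:fofd}.}

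The plan is to establish the two assertions using the Galois equivariance already recorded for Gauss sums and hypergeometric sums. For the first claim, the key input is \Cref{rmk:Gaussum}: changing the generator $\omega^{1/\qq}$ amounts to post-composing with an element of $\Gal(\Q(\zeta_{\qq})\,|\,\Q)$, and under $\sigma_k$ one has $\sigma_k\circ\omega = \omega^k$ while $\Theta$ is fixed. Applying $\sigma_k$ termwise to the definition \eqref{def:Hpsi}–\eqref{alt Gtwist} of $H_q(\pmb\alpha,\pmb\beta,t)$, each Gauss sum $g(\gamma)$ is sent to $g(k\gamma)$, so $G(\pmb\alpha+\mu/\qq,\pmb\beta+\mu/\qq)$ is sent to $G(k\pmb\alpha+k\mu/\qq,k\pmb\beta+k\mu/\qq)$ and $\omega((-1)^nt)^{\mu/\qq}$ to $\omega((-1)^nt)^{k\mu/\qq}$; after reindexing the sum by $\mu\mapsto k\mu$ (a bijection of $\Z/\qq\Z$ since $\gcd(k,\qq)=1$), this is exactly $H_q(k\pmb\alpha,k\pmb\beta,t)$. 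Hence $\sigma_k\bigl(H_q(\pmb\alpha,\pmb\beta,t)\bigr) = H_q(k\pmb\alpha,k\pmb\beta,t)$ for $t \in \F_q^\times$ fixed, and whenever $k$ lies in the stabilizer $H$ this equals $H_q(\pmb\alpha,\pmb\beta,t)$ by the $S_n$-invariance of the definition (simultaneous permutation of numerator/denominator pairs leaves $G$ and hence $H$ unchanged); therefore $H_q(\pmb\alpha,\pmb\beta,t) \in \Q(\zeta_m)^H = K_{\pmb\alpha,\pmb\beta}$. (One must be slightly careful that $t \in \F_q^\times$ and the values lie in $\Q(\zeta_{\qq})$; the point is that the Galois action on the ambient cyclotomic field restricts compatibly, and the expression $H_q(\pmb\alpha,\pmb\beta,t)$ only involves the $m$-torsion characters since $m\pmb\alpha,m\pmb\beta\subset\Z$.)

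For the second claim, $H_q(p\pmb\alpha,p\pmb\beta,t) = H_q(\pmb\alpha,\pmb\beta,t^p)$, I would argue directly from the definition using the Frobenius behaviour of Gauss sums over $\F_q$ with $q = p^r$. The substitution $x \mapsto x^p$ is an automorphism of $\F_q$ fixing $\F_p$, so $\omega^\gamma(x^p) = \omega^{p\gamma}(x)$ and $\Theta(x^p) = \Theta(x)$ (as $\Theta = \Theta_0\circ\Tr_{\F_q/\F_p}$ for some additive character $\Theta_0$ of $\F_p$, and Frobenius commutes with the trace); reindexing the Gauss sum $g(p\gamma) = \sum_x \omega^{p\gamma}(x)\Theta(x) = \sum_x \omega^{\gamma}(x^p)\Theta(x^p) = g(\gamma)$ shows more precisely that $g(p\gamma)$ and $g(\gamma)$ agree, so $G(p\pmb\alpha+p\mu/\qq,p\pmb\beta+p\mu/\qq) = G(\pmb\alpha+\mu/\qq,\pmb\beta+\mu/\qq)$. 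Meanwhile the twisting factor becomes $\omega((-1)^nt)^{p\mu/\qq}$; writing this as $\omega((-1)^n t^p)^{\mu/\qq} \cdot \omega\bigl((-1)^{n(p-1)}\bigr)^{\mu/\qq}$ and noting $(-1)^{p-1}=1$ for $p$ odd (and handling $p=2$ separately, where $-1=1$ in $\F_q$), the factor is $\omega((-1)^n t^p)^{\mu/\qq}$. Summing over $\mu$ then yields $H_q(\pmb\alpha,\pmb\beta,t^p)$.

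The main obstacle I anticipate is bookkeeping rather than conceptual: one must confirm that reindexing $\mu\mapsto k\mu$ (resp.\ $\mu\mapsto p\mu$, which is \emph{not} a bijection of $\Z/\qq\Z$ when $p\mid q$) is handled correctly. For the Frobenius statement the cleaner route is not to reindex $\mu$ at all, but to observe that $p\pmb\alpha + p\mu/\qq$ as $\mu$ ranges over $\{0,\dots,\qq-1\}$ gives each residue class $p\pmb\alpha + \nu/\qq$ the same number of times in a way that, combined with $g(p\gamma)=g(\gamma)$, collapses to the sum defining $H_q(\pmb\alpha,\pmb\beta,t^p)$ — alternatively, one can cite that this is the standard Frobenius/Galois descent property of the BCM hypergeometric sum (via the relation $H = G\cdot H^{\mathrm{BCM}}$ in the remark following \Cref{defn:ourtwisthgm}) together with \Cref{thm:heckechar}, which already encodes $\sigma_k(\psi_{\alpha,\beta}) = \psi_{k\alpha,k\beta}$ and hence the $p$-power relation on the unnormalized factor. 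I would present the $\sigma_k$ computation in full and then deduce both statements, flagging the $p=2$ sign check as trivial.
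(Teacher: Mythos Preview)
Your approach is the right one and matches the paper's (which just cites \cite[Lemma~3.2.10]{DKSSVW20} and notes the period-normalizing factor $G(\pmb\alpha,\pmb\beta)$ transforms the same way). The Galois-equivariance argument for the first claim is fine; the parenthetical about descending from $\Q(\zeta_{\qq})$ to $\Q(\zeta_m)$ is exactly the point that any $k\equiv 1\psmod m$ fixes the parameters (since $m\pmb\alpha,m\pmb\beta\subset\Z$), so $H_q$ lands in $\Q(\zeta_m)$ before you restrict to $H$.

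There is, however, one genuine slip: your worry that $\mu\mapsto p\mu$ is ``not a bijection of $\Z/\qq\Z$ when $p\mid q$'' is unfounded. Since $q=p^r$, we have $\qq=p^r-1\equiv -1\psmod p$, so $\gcd(p,\qq)=1$ and multiplication by $p$ \emph{is} a bijection of $\Z/\qq\Z$. Hence the direct reindexing $\nu=p\mu$ in
\[
H_q(p\pmb\alpha,p\pmb\beta,t)=\frac{1}{\qq}\sum_{\nu} G(p\pmb\alpha+\nu/\qq,p\pmb\beta+\nu/\qq)\,\omega((-1)^n t)^{\nu/\qq}
\]
is perfectly valid, and combined with $g(p\gamma)=g(\gamma)$ and $\omega(x)^p=\omega(x^p)$ (plus your sign check $(-1)^{np}=(-1)^n$ for odd $p$, trivial for $p=2$) gives the second claim immediately. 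The workarounds you propose (avoiding reindexing, or routing through $H=G\cdot H^{\mathrm{BCM}}$ and \Cref{thm:heckechar}) are unnecessary.
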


\begin{proof}
Repeat the proof of \cite[Lemma 3.2.10]{DKSSVW20}, observing that $G(\pmb{\alpha},\pmb{\beta})$ has the same invariance so the conclusion holds as well for the period-normalized sum.
\end{proof}

\begin{defn}\label{def:nondegenerate}
Let $\pmb{\alpha}, \pmb{\beta}$ be hypergeometric parameters with $\#\pmb{\alpha} = \#\pmb{\beta}$. We say that $\alpha_i$ or $\beta_i$ is a \defi{degenerate parameter} if $\beta_i-\alpha_i \in \Z$.  We say the parameters are \defi{degenerate} if there exists a degenerate parameter, else they are \defi{nondegenerate}.  Finally, we say the parameters are \defi{isotypically degenerate} if there is exactly one degenerate parameter. 
\end{defn}

\begin{exm} \label{exm:isotypic}
The parameters $\pmb{\alpha}=(1/2,3/4)$, $\pmb{\beta}=(0,1/4)$ are nondegenerate; $\pmb{\alpha}=(1/2,1/2,1/4)$ and $\pmb{\beta}=(1/2,1/2,3/4)$ are isotypically degenerate with the unique degenerate parameter $1/2$ having multiplicity $2$, and $\pmb{\alpha}=(1/2,0,1/4)$ and $\pmb{\beta}=(1/2,0,3/4)$ are degenerate but not isotypically degenerate. 
\end{exm}

\begin{rmk}
The condition in \Cref{def:nondegenerate} has previously been known as \emph{primitive} in the literature \cite{FLRST}; however, the condition is not that a failure of coprimality of some kind, but rather that the period-normalized hypergeometric sum decomposes nontrivially as sums in the degenerate case when one considers the Gauss sums in light of \Cref{BCM Jacobi Prop}.  
\end{rmk}

\subsection{Hypersurface point counts}

We now prove an identity that allows us to use our period-normalized hypergeometric sum to count points when the hypergeometric parameters are nondegenerate. This is in analogy with the recursive definition given in \cite{FLRST}, but our proof does not require any recursive argument or definition.

For the remainder of this section, we suppress the dependence on $q$ and write just $H=H_q$.

\begin{prop}\label{main hypergeometric formula}
Let $\pmb{\alpha}, \pmb{\beta} \subset (1/\qq)\Z$ be nondegenerate parameters such that $\#\pmb{\alpha} = \#\pmb{\beta}=n$.  Let $t \in \F_q \smallsetminus \{0,1\}$.  Then
\begin{equation} \label{eqn:xidf}
 \sum_{\substack{(x_i) \in (\F_q \setminus\{0,1\})^n \\ tx_1 \cdots x_n = 1}}\left(\prod_{i=1}^{n} \omega^{\alpha_i}(-x_i) \omega^{\beta_i - \alpha_i}(1-x_i)\right) =-H(\pmb\alpha, \pmb\beta, t).
\end{equation}
\end{prop}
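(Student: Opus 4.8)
The plan is to evaluate the left-hand side directly by opening up the characters via Gauss/Jacobi sums and then recognizing the result as the hypergeometric sum $H(\pmb\alpha,\pmb\beta,t)$. First I would use the constraint $tx_1\cdots x_n=1$ to parametrize the summation set: writing $x_n = (tx_1\cdots x_{n-1})^{-1}$ turns the left side into a sum over $(x_1,\dots,x_{n-1}) \in (\F_q\setminus\{0,1\})^{n-1}$ with the additional requirement that $x_n \neq 0,1$, i.e. $tx_1\cdots x_{n-1} \neq 1$. A cleaner route, and the one I would actually pursue, is to keep all $n$ variables but insert the indicator of the constraint using additive characters: $\mathbbm{1}[tx_1\cdots x_n = 1] = \frac{1}{q}\sum_{\psi}\psi(tx_1\cdots x_n - 1)$ where $\psi$ runs over additive characters — but multiplicatively it is more natural to detect $tx_1\cdots x_n = 1$ with the multiplicative characters: $\mathbbm{1}[z=1] = \frac{1}{\qq}\sum_{\mu=0}^{\qq-1}\omega^{\mu/\qq}(z)$ for $z \in \F_q^\times$. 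Applying this with $z = tx_1\cdots x_n$ factors the constraint across the variables.

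Having inserted $\frac{1}{\qq}\sum_{\mu} \omega^{\mu/\qq}(t)\prod_i \omega^{\mu/\qq}(x_i)$, the left-hand side becomes
\[
\frac{1}{\qq}\sum_{\mu=0}^{\qq-1}\omega^{\mu/\qq}(t)\prod_{i=1}^n\left(\sum_{x_i\in\F_q\setminus\{0,1\}}\omega^{\alpha_i+\mu/\qq}(x_i)\,\omega^{\beta_i-\alpha_i}(1-x_i)\right).
\]
Each inner sum is essentially a Jacobi sum after applying the Möbius trick of Lemma~\ref{mobius trick} (or directly Definition~\ref{def:jacobi}) to deal with the sign: one needs to bring $\omega^{\alpha_i}(-x_i)$ into the form $\omega^{\alpha_i}(x_i)$ by pulling out $\omega^{\alpha_i}(-1)$, and then recognize $\sum_{x}\omega^{\alpha_i+\mu/\qq}(x)\omega^{-(\beta_i-\alpha_i)}(1-x)$ — wait, I must be careful with the exponent on $(1-x)$: the left-hand side has $\omega^{\beta_i-\alpha_i}(1-x_i)$, so after reindexing I get $J(\alpha_i+\mu/\qq, \beta_i-\alpha_i)$. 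I would then use Lemma~\ref{mobius trick} to rewrite this so that it becomes $J(\alpha_i', -\beta_i')$ with $\alpha_i' = \alpha_i+\mu/\qq$ and $\beta_i' = \beta_i+\mu/\qq$, matching the shape needed for Corollary~\ref{cor:case by case Jacobi}. Since $\pmb\alpha,\pmb\beta$ are nondegenerate, $\beta_i-\alpha_i \notin \Z$ for every $i$, and crucially this persists after shifting both by $\mu/\qq$; hence Proposition~\ref{BCM Jacobi Prop} / Corollary~\ref{cor:case by case Jacobi} always land in the first case $J(\alpha_i+\mu/\qq,-(\beta_i+\mu/\qq)) = g(\alpha_i+\mu/\qq)g(-(\beta_i+\mu/\qq))/g((\alpha_i-\beta_i))$ with no correction term. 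Collecting the $\omega^{\alpha_i}(-1)$ factors over $i$ produces exactly the $\omega((-1)^n)$ that appears inside $\omega((-1)^nt)^{\mu/\qq}$ in Definition~\ref{def:Hpsi}, and the product of the Jacobi sums is precisely $G(\pmb\alpha+\mu/\qq,\pmb\beta+\mu/\qq)$ up to the overall sign; after matching constants one reads off $-H(\pmb\alpha,\pmb\beta,t)$, the extra minus sign coming from the $\mu=0$ term or from the normalization of the character-detection identity (this is where I would track signs most carefully).

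The main obstacle I anticipate is \emph{bookkeeping of the Möbius substitution and the signs}, not any deep input: one has to apply Lemma~\ref{mobius trick} (and possibly track $1-M(x)=(1-x)^{-1}$ type identities) consistently across all $n$ factors, verify that the $\omega^{\alpha_i}(-1)$ and $\omega^{-\beta_i}(-1)$ contributions combine correctly with the $\omega^{\mu/\qq}$ inserted by the constraint-detection step, and confirm the boundary contributions (the excluded values $x_i=0,1$, and whether $z=tx_1\cdots x_n$ can be $0$ — it cannot, since all $x_i\neq 0$). A secondary subtlety is making sure the nondegeneracy hypothesis is used exactly where needed: it guarantees the Jacobi sums never degenerate into the second or third cases of Corollary~\ref{cor:case by case Jacobi}, so the sum over $\mu$ has a uniform closed form and assembles cleanly into $H$. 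Once those are pinned down, the identity follows by comparing the resulting expression term-by-term in $\mu$ with Definition~\ref{def:Hpsi}.
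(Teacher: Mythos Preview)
Your approach is correct and essentially the same as the paper's: both arguments use multiplicative orthogonality $\frac{1}{\qq}\sum_\mu \omega^{\mu/\qq}(z)=\mathbbm{1}[z=1]$ to couple the constraint $tx_1\cdots x_n=1$ to a sum over $\mu$, apply Lemma~\ref{mobius trick} to massage each single-variable sum into a Jacobi sum of the shape $J(\alpha_i+\mu/\qq,\,-\beta_i-\mu/\qq)$, and invoke nondegeneracy via Proposition~\ref{BCM Jacobi Prop} to rewrite each as $g(\alpha_i+\mu/\qq)g(-\beta_i-\mu/\qq)/g(\alpha_i-\beta_i)$, whose product is $G(\pmb\alpha+\mu/\qq,\pmb\beta+\mu/\qq)$. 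The only cosmetic difference is direction: the paper starts from $H$ and unwinds to the constrained sum, whereas you start from the constrained sum and assemble $H$.

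One small correction on the bookkeeping you flagged: the sign factors that combine with $\omega^{\mu/\qq}(t)$ to produce $\omega((-1)^n t)^{\mu/\qq}$ are the $\omega^{\mu/\qq}(-1)$ coming from $\omega^{\mu/\qq}(x_i)=\omega^{\mu/\qq}(-1)\,\omega^{\mu/\qq}(-x_i)$, not the $\omega^{\alpha_i}(-1)$ you mentioned (those do not depend on $\mu$ and so cannot contribute to the $\mu$-dependent twist). Writing $\omega^{\alpha_i}(-x_i)\,\omega^{\mu/\qq}(x_i)=\omega^{\mu/\qq}(-1)\,\omega^{\alpha_i+\mu/\qq}(-x_i)$ and then applying Lemma~\ref{mobius trick} in the form $\sum_x \omega^{\gamma}(-x)\omega^{\delta}(1-x)=\sum_x \omega^{\gamma}(x)\omega^{-\gamma-\delta}(1-x)$ with $\gamma=\alpha_i+\mu/\qq$, $\delta=\beta_i-\alpha_i$ lands you exactly on $J(\alpha_i+\mu/\qq,\,-\beta_i-\mu/\qq)$, after which the rest is as you say.
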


\begin{rmk}
     The summation in \eqref{eqn:xidf} does not change if we add in points where $x_i=0$ or $1$ for any $i$ since our convention in  \textsection\ref{sec:ffhg} imposes that $\chi(0)=0$ for all characters $\chi$.
\end{rmk} 

\begin{proof}
By \Cref{BCM Jacobi Prop} using the nondegeneracy hypothesis along with \Cref{def:jacobi}, 
\begin{equation} \label{eqngmpso}
\begin{aligned}
G(\pmb\alpha +\m/\qq, \pmb\beta + \m/\qq) &= 
\prod_{i=1}^n \frac{g( \alpha_i + \m/\qq)g(-\beta_i- \m/\qq)}{g(\alpha_i - \beta_i)} 
= \prod_{i=1}^n J(\alpha_i + \mu/\qq, -\mu/\qq-\beta_i)
\\
&= 
\prod_{i=1}^n \left( \sum_{x_i \in \F_q \setminus\{0,1\}} \omega^{\m/\qq + \alpha_i}(x_i) \omega^{-\m/\qq - \beta_i}(1-x_i)\right) \\
&= \prod_{i=1}^n \left(\sum_{x_i \in \F_q \setminus\{0,1\}} \omega^{\mu/\qq + \alpha_i}(-x_i) \omega^{\beta_i - \alpha_i}(1-x_i) \right),
\end{aligned}
\end{equation}
where the last line follows from applying \Cref{mobius trick}.

We then plug \eqref{eqngmpso} into \Cref{defn:ourtwisthgm} to obtain
\begin{equation}\begin{aligned}
H(\pmb \alpha, \pmb\beta, t) &= - \frac{1}{\qq} \sum_{\m = 0}^{\qq -1}\left(\prod_{i=1}^n \sum_{x_i \in \F_q \setminus\{0,1\}} \omega^{\alpha_i}(-x_i) \omega^{\beta_i-\alpha_i }(1-x_i)\omega^{\m/\qq}(-x_i) \right) \omega((-1)^n t)^{\m/\qq} \\
    &\qquad= -\sum_{(x_i) \in (\F_q \setminus\{0,1\})^n} \prod_{i=1}^n \omega^{\alpha_i}(-x_i) \omega^{\beta_i-\alpha_i }(1-x_i) \left( \frac{1}{\qq} \sum_{\m = 0}^{\qq -1}\omega(t x_1\cdots x_d)^{\m/\qq} \right).
\end{aligned}
\end{equation}
Finally, by character theory the latter term is $1$ or $0$ as $tx_1\cdots x_d=1$ or not; so the sum becomes
\begin{align*}
H(\pmb \alpha, \pmb\beta, t)
    &= -\sum_{\substack{(x_i) \in (\F_q \setminus\{0,1\})^n \\ tx_1 \cdots x_n = 1}} \prod_{i=1}^n \omega^{\alpha_i}(-x_i) \omega^{(\beta_i-\alpha_i) }(1-x_i). \qedhere
\end{align*}
\end{proof}

\subsection{Hypergeometric $L$-series}  \label{sec:hyplseries}

To conclude, we globalize.  Suppose that $\alpha_i-\beta_j \not\in\Z$ for all $i,j=1,\dots,n$ (a fundamental premise of our paper is to analyze degeneracies separately).  
Let $t \in \Q \smallsetminus \{0,1\}$, and let $S(\pmb{\alpha},\pmb{\beta}, t)$ be the set of primes dividing $m$ together with the numerator or denominator of either $t$ or $t-1$; we call these primes \defi{bad}.

Let $p \not\in S(\pmb{\alpha},\pmb{\beta})$ be a good prime, let $q=p^f=\Nm(\frakp)$ for any prime $\frakp$ in $\Z[\zeta_m]$ above $p$.  Then in particular $\qq \pmb{\alpha}, \qq\pmb{\beta} \subset \Z$.  We recall (\Cref{defn:ourtwisthgm}, \Cref{lem:fofd}) the period-normalized hypergeometric sum $H_q(\pmb{\alpha}, \pmb{\beta}, t) \in K_{\pmb{\alpha}, \pmb{\beta}} \subseteq \Q(\zeta_m)$.  We define its exponential generating series
\begin{equation}
    L_q(H(\pmb{\alpha}, \pmb{\beta}, t), T) \colonequals  \exp\left(\sum_{r=1}^\infty -H_{q^r}(\pmb{\alpha}, \pmb{\beta}, t)\frac{T^r}{r}\right) \in 1+ TK_{\pmb{\alpha}, \pmb{\beta}}[[T]].
\end{equation}
and the product
\begin{equation} \label{Lpyup}
\begin{aligned}
L_p(H(\pmb{\alpha},\pmb{\beta},t), \Q(\zeta_m), T) &\colonequals  \prod_{k \in (\Z/m\Z)^\times/\langle p \rangle} L_q(H(k\pmb{\alpha},k\pmb{\beta},t),T^f) \in 1+TK_{\pmb{\alpha},\pmb{\beta}}[[T]].
\end{aligned}
\end{equation}

\begin{lem} \label{lem:Hqabdef}
We have
\begin{equation} 
L_p(H(\pmb{\alpha},\pmb{\beta},t), \Q(\zeta_m), T) \in 1+T \Q[[T]]
\end{equation}
and 
\begin{equation} \label{eqn:LpHk}
L_p(H(\pmb{\alpha},\pmb{\beta},t), \Q(\zeta_m), T)=L_p(H(k\pmb{\alpha};k\pmb{\beta},t), \Q(\zeta_m), T)
\end{equation}
for all $k \in \Z$ coprime to $p$ and $m$.  
\end{lem}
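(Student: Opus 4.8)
The plan is to deduce both assertions from the Galois equivariance of the period-normalized hypergeometric sum already recorded in \Cref{lem:fofd}, namely that $H_q(\pmb{\alpha},\pmb{\beta},t) \in K_{\pmb{\alpha},\pmb{\beta}}$ and $H_q(p\pmb{\alpha},p\pmb{\beta},t) = H_q(\pmb{\alpha},\pmb{\beta},t^p)$, together with the definition of $L_p$ in \eqref{Lpyup} as a product over the orbit space $(\Z/m\Z)^\times/\langle p\rangle$. First I would fix the identification $\Gal(\Q(\zeta_m)\mid\Q) \cong (\Z/m\Z)^\times$ and observe that the $\langle p\rangle$-orbit of $k$ is precisely the set of $p$-power multiples of $k$ modulo $m$; since $p \notin S$, the Frobenius $\sigma_p$ at $p$ has order exactly $f = [\F_q : \F_p]$ in the decomposition group of $\frakp$, so each orbit has size $f$ and $(\Z/m\Z)^\times/\langle p\rangle$ has $\phi(m)/f$ elements, matching the degree $\phi(m)$ when one expands each $L_q(\,\cdot\,,T^f)$ factor.

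For rationality, the key step is to show that $\Gal(\Q(\zeta_m)\mid\Q)$ permutes the factors of the product \eqref{Lpyup}. Applying $\sigma_\ell$ for $\ell \in (\Z/m\Z)^\times$ to $L_q(H(k\pmb{\alpha},k\pmb{\beta},t),T^f)$: by \Cref{thm:heckechar} the analogous Galois equivariance $\sigma_\ell(\psi_{\pmb{\alpha},\pmb{\beta}}) = \psi_{\ell\pmb{\alpha},\ell\pmb{\beta}}$ holds, and since $G(\pmb{\alpha},\pmb{\beta})$ has the same invariance (as noted in the proof of \Cref{lem:fofd}), and $\Theta$ is Galois-fixed, one gets $\sigma_\ell(H_{q^r}(k\pmb{\alpha},k\pmb{\beta},t)) = H_{q^r}(\ell k\pmb{\alpha},\ell k\pmb{\beta},t)$ for all $r$. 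Because the coefficients of $L_q(H(k\pmb{\alpha},k\pmb{\beta},t),T^f)$ are polynomial expressions in the $H_{q^r}$, it follows that $\sigma_\ell$ sends the factor indexed by $k$ to the factor indexed by $\ell k$. Since multiplication by $\ell$ is a bijection on $(\Z/m\Z)^\times$ that descends to a bijection on the quotient $(\Z/m\Z)^\times/\langle p\rangle$ (as $\ell$ commutes with $p$), $\sigma_\ell$ merely permutes the factors of the product, hence fixes $L_p(H(\pmb{\alpha},\pmb{\beta},t),\Q(\zeta_m),T)$. As this holds for every $\sigma_\ell$ and the coefficients a priori lie in $K_{\pmb{\alpha},\pmb{\beta}} \subseteq \Q(\zeta_m)$, the coefficients lie in $\Q$.

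For \eqref{eqn:LpHk}, I would argue that replacing $(\pmb{\alpha},\pmb{\beta})$ by $(k\pmb{\alpha},k\pmb{\beta})$ for $k$ coprime to $p$ and $m$ simply reindexes the same product: the factor of $L_p(H(k\pmb{\alpha},k\pmb{\beta},t),\Q(\zeta_m),T)$ indexed by $j \in (\Z/m\Z)^\times/\langle p\rangle$ is $L_q(H(jk\pmb{\alpha},jk\pmb{\beta},t),T^f)$, and as $j$ runs over $(\Z/m\Z)^\times/\langle p\rangle$ so does $jk$ (since $k$ is a unit commuting with $p$); thus the two products have the same factors. One should also check the base field is unchanged, i.e.\ $K_{k\pmb{\alpha},k\pmb{\beta}} = K_{\pmb{\alpha},\pmb{\beta}}$, which is immediate since the stabilizer subgroup $H$ is conjugation-invariant under multiplication by the unit $k$ in the abelian group $(\Z/m\Z)^\times$. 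I expect the main obstacle to be purely bookkeeping: carefully matching the orbit-counting with the factor $T^f$ substitution so that the coefficient-by-coefficient Galois action is genuinely a permutation of factors rather than something subtler, and making sure the case distinctions around $\gcd(k,p)=1$ versus $\gcd(k,m)=1$ are handled cleanly; there is no deep input beyond \Cref{lem:fofd} and \Cref{thm:heckechar}.
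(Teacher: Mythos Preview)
Your proposal is correct and follows the same Galois-equivariance-and-orbit-reindexing strategy that the paper invokes by citing \cite[Lemma 4.1.9]{DKSSVW20}; the paper itself gives no further details beyond remarking that the period normalization does not disturb the argument. You have essentially reconstructed the cited proof.
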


\begin{proof}
Repeat the argument in \cite[Lemma 4.1.9]{DKSSVW20} (our sum is period-normalized, but the same argument applies).  
\end{proof}

The following result due to Katz is foundational.  

\begin{thm}[Katz] \label{Katzthm}
$L_q(H(\pmb{\alpha},\pmb{\beta},T) \in 1+T\Q(\zeta_m)[T]$ is a polynomial of degree $n$.  
\end{thm}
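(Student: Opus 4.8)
\emph{This is a theorem of Katz, and the task is essentially one of translation: to match our period‑normalized sum against the trace of Frobenius on Katz's hypergeometric sheaf.} The plan is as follows. First, using the remark following \Cref{defn:ourtwisthgm}, factor $H_{q^r}(\pmb{\alpha},\pmb{\beta},t) = G_{q^r}(\pmb{\alpha},\pmb{\beta})\,H^{\mathrm{BCM}}_{q^r}(\pmb{\alpha},\pmb{\beta},t)$ for every $r\ge 1$. By the Hasse--Davenport relation applied to each Gauss sum in $G=\prod_i g(\alpha_i)g(-\beta_i)/g(\alpha_i-\beta_i)$, we get $G_{q^r}(\pmb{\alpha},\pmb{\beta}) = (-1)^{n(r-1)}G_q(\pmb{\alpha},\pmb{\beta})^r$; thus the $G$‑factor merely rescales everything by the single algebraic number $G_q(\pmb{\alpha},\pmb{\beta})\in\Q(\zeta_m)$ — its value as the Hecke Grossencharacter $\psi_{\pmb{\alpha},\pmb{\beta}}$ of \Cref{thm:heckechar} — up to a sign harmless after regrouping terms.

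Next, invoke Katz \cite{Katz96}: under the running hypothesis $\alpha_i-\beta_j\notin\Z$ for all $i,j$, the numerator and denominator characters are disjoint, so there is a hypergeometric $\ell$‑adic sheaf $\mathcal{H}(\pmb{\alpha},\pmb{\beta})$, lisse of rank exactly $n$ on the Zariski‑open $\Gm\smallsetminus\{1\}$ over $\Z[1/m]$; the comparison of Beukers--Cohen--Mellit \cite{BCM} identifies $H^{\mathrm{BCM}}_{q^r}(\pmb{\alpha},\pmb{\beta},t)$ with the trace of $\Frob_{q,t}^r$ on the stalk $\mathcal{H}(\pmb{\alpha},\pmb{\beta})_{\bar t}$, up to an explicit Tate twist and sign. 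Because this stalk is an $n$‑dimensional $\ell$‑adic vector space, $\det\bigl(1-\Frob_{q,t}T\mid\mathcal{H}(\pmb{\alpha},\pmb{\beta})_{\bar t}\bigr)$ is a polynomial of degree $n$, and the power‑sum identity $\Tr(\Frob_{q,t}^r)=\sum_j\gamma_j^r$ for its reciprocal roots $\gamma_j$, fed through the definition of $L_q$ together with the first step, collapses $\exp\bigl(\sum_r H_{q^r}(\pmb{\alpha},\pmb{\beta},t)T^r/r\bigr)$ into $\prod_{j=1}^n\bigl(1-G_q(\pmb{\alpha},\pmb{\beta})\gamma_j T\bigr)$, the exponent being $+1$ once the twists and the single cohomological degree are accounted for. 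Hence $L_q(H(\pmb{\alpha},\pmb{\beta},t),T)$ is a polynomial of degree $n$; its coefficients are polynomial expressions in the $H_{q^r}(\pmb{\alpha},\pmb{\beta},t)$, which lie in $K_{\pmb{\alpha},\pmb{\beta}}\subseteq\Q(\zeta_m)$ by \Cref{lem:fofd}, so $L_q(H(\pmb{\alpha},\pmb{\beta},t),T)\in 1+T\Q(\zeta_m)[T]$.

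The genuinely deep input — that the hypergeometric sheaf is lisse of rank exactly $n$ — is simply quoted from Katz (it rests on rigidity and the local monodromy computation at $0,1,\infty$); the main work on our side is the normalization bookkeeping: pinning down the precise Tate twist (power of $q$) and the signs relating the period‑normalized sum, the BCM sum, and the Frobenius trace, and checking via Hasse--Davenport that the $G$‑twist combines with these so that the generating function is the characteristic polynomial itself rather than its reciprocal, including the dependence on the parity of $n$. An alternative, more self‑contained route bypasses \cite{BCM}: apply the Grothendieck--Lefschetz trace formula directly to the character sum of \Cref{main hypergeometric formula}, realizing $-H_{q^r}(\pmb{\alpha},\pmb{\beta},t)$ as $\sum_i(-1)^i\Tr\bigl(\Frob_{q,t}^r\mid H^i_c(\mathcal{U}_{t,\overline{\F_q}},\mathcal{L})\bigr)$ for the evident tame rank‑one Kummer sheaf $\mathcal{L}$ on the $(n-1)$‑dimensional open toric variety $\mathcal{U}_t = \{tx_1\cdots x_n=1\}\cap(\Gm\smallsetminus\{1\})^n$, and then quoting the vanishing of these cohomology groups outside the middle degree together with the rank‑$n$ statement; but this essentially re‑derives Katz's computation.
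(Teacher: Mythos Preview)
Your proposal is correct and takes essentially the same approach as the paper: both rest on quoting Katz's result that the hypergeometric sheaf is lisse of rank $n$, so that the finite field sum is a Frobenius trace and the exponential generating series is a degree-$n$ characteristic polynomial. The paper's own proof is literally a two-sentence citation to \cite[Theorem 8.4.2 and (8.2.7)]{Katz} (with a nod to \cite[Chapter 8]{Katz96}); you have filled in considerably more of the normalization bookkeeping---the Hasse--Davenport step for the period twist $G$ and the comparison with the BCM normalization---though, as you yourself flag, the precise signs and Tate twist still need to be pinned down.
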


\begin{proof}
See \cite[Theorem 8.4.2]{Katz}.  The finite field hypergeometric sum \cite[(8.2.7)]{Katz} is the trace on a sheaf of rank $n$ obtained from convolution.  (More generally, see also Katz's work on rigid local systems \cite[Chapter 8]{Katz96}.)
\end{proof}

\section{Point counts in cyclic covers and for the family \texorpdfstring{$Y_{\aunder, \bunder, m}$}{Yabm}}

In this section, we count points on fibers of the family $X_{\aunder, \bunder, m}$ defined in \eqref{eqn:alleqns}, obtained as a partial compactification of the family defined in~\eqref{eqn:thefam}.  We continue the notation from the previous sections.

\subsection{Character sums and primitive point counts}

Throughout this section, let $q$ be a prime power with $\gcd(m,q)=1$, and let $m_q \colonequals \gcd(m,\qq)$.  To count points over $\F_q$, we require the following basic proposition which relates them to character sums. 

\begin{prop}\label{Branch point count}
For all $x \in \F_q^\times$,
\[
\# \{ y \in \F_q^\times : y^m = x\} = \# \{ y \in \F_q^\times : y^{m_q} = x\} = \sum_{k=1}^{m_q} \omega^{k/m_q}(x).
\]
\end{prop}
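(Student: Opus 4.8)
The plan is to prove the two equalities separately. For the first equality, note that $\F_q^\times$ is cyclic of order $\qq = q-1$, and for a cyclic group the image of the $m$-th power map equals the image of the $\gcd(m,\qq)$-th power map: writing $d = m_q = \gcd(m,\qq)$, one can find integers $u,v$ with $um + v\qq = d$, so $x^{um} = x^d$ for all $x \in \F_q^\times$, showing $y^m$ ranges over the same subgroup as $y^d$, and moreover the fibers have the same size because both equal the order of the relevant subgroup of $m$-th (resp. $d$-th) roots of unity in $\F_q^\times$, which is $\gcd(m,\qq) = d$ in both cases. More precisely, $\#\{y : y^m = x\}$ is either $0$ or equal to $\#\mu_m(\F_q) = \gcd(m,\qq) = \#\mu_{m_q}(\F_q)$, and the condition for nonemptiness (that $x$ lies in the image) is the same for $m$ and for $m_q$ since both images are the unique subgroup of $\F_q^\times$ of index $\qq/d$.

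For the second equality, I would expand the character sum on the right. Since $m_q \mid \qq$, the character $\omega^{1/m_q}$ is well-defined on $\F_q^\times$ and has order exactly $m_q$, so $\{\omega^{k/m_q} : k = 1, \dots, m_q\}$ is precisely the group of characters of $\F_q^\times$ that are trivial on the subgroup $(\F_q^\times)^{m_q}$ of $m_q$-th powers (equivalently, the characters of the quotient $\F_q^\times/(\F_q^\times)^{m_q}$). By the orthogonality relations for characters of the finite abelian group $\F_q^\times$, the sum $\sum_{k=1}^{m_q} \omega^{k/m_q}(x)$ equals $m_q$ if $x \in (\F_q^\times)^{m_q}$ and $0$ otherwise. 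On the other hand, $\#\{y \in \F_q^\times : y^{m_q} = x\}$ is $m_q$ if $x$ is an $m_q$-th power (by the first part, since $\#\mu_{m_q}(\F_q) = m_q$ as $m_q \mid \qq$) and $0$ otherwise. Comparing the two case distinctions — which agree exactly because both hinge on whether $x \in (\F_q^\times)^{m_q}$ — yields the claimed equality.

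I do not expect a serious obstacle here; this is a standard application of the structure of cyclic groups and character orthogonality. The only point requiring a little care is bookkeeping around the gcd: making sure that $\gcd(m,\qq) = \gcd(m_q,\qq) = m_q$ so that the roots-of-unity counts $\#\mu_m(\F_q)$ and $\#\mu_{m_q}(\F_q)$ genuinely coincide, and that $\omega^{1/m_q}$ makes sense (which needs exactly $m_q \mid \qq$, true by definition of $m_q$). One should also note the convention from \textsection\ref{sec:ffhg} that $\omega(0) = 0$, so the identity as stated is restricted to $x \in \F_q^\times$ and no boundary term intervenes.
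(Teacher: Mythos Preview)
Your proposal is correct and is essentially the same approach the paper takes: the paper simply writes ``Straightforward character theory, e.g.\ generalizing Ireland--Rosen \cite[Proposition 8.1.5]{IR90},'' and your argument is precisely the spelled-out version of that, using cyclicity of $\F_q^\times$ for the first equality and orthogonality of the characters $\omega^{k/m_q}$ for the second.
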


\begin{proof}
Straightforward character theory, e.g.\ generalizing Ireland--Rosen \cite[Proposition 8.1.5]{IR90}.
\end{proof}

In particular, \Cref{Branch point count} implies that our point counts below over $\F_q$ will only need to consider characters of order dividing $m_q$. 

We organize the sum in \Cref{Branch point count} into pieces as follows.  Suppose that $t \in \F_q \smallsetminus \{0,1\}$.  We define the \defi{primitive} point count for $Y_{\aunder,\bunder,m_q,t}$ over $\F_q$ by
\begin{equation} \label{Pabnt}
P_{\aunder,\bunder,m_q,t}(\F_q) \colonequals \sum_{k \in (\Z/m_q\Z)^\times} \sum_{\substack{x \in (\F_q\smallsetminus\{0,1\})^n \\ tx_1\cdots x_n=1}} \omega^{k/m_q}(f_{\aunder,\bunder}(x)).
\end{equation}

\begin{exm}
When $m_q=1$, we interpret the sum as over one index $k=0$, and
\[ P_{\aunder,\bunder,1,t}(\F_q) = \#U_n(\F_q) = \#\{x \in (\F_q \smallsetminus \{0,1\})^n : tx_1\cdots x_n=1\}. \]
We have 
\[ \#U_n(\F_q) = (q-2)^{n-1} - \#U_{n-1}(\F_q) \]
and $\#U_1(\F_q) = 1$ (since $t \neq 1$), so 
\[ P_{\aunder,\bunder,1,t}(\F_q) = \sum_{i=0}^{n-1} (-1)^i (q-2)^{n-1-i} = \frac{(q-2)^n - (-1)^n}{q-1}. \]
\end{exm}

\begin{lem} \label{slamlem}
 We have
\[ 
\#Y_{\aunder,\bunder,m,t}(\F_q) = \#Y_{\aunder,\bunder,m_q,t}(\F_q) = \sum_{d \mid m_q} P_{\aunder,\bunder,d,t}(\F_q). \]
\end{lem}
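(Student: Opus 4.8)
The plan is to prove the chain of equalities in \Cref{slamlem} in two stages: first reduce from $m$ to $m_q$, then decompose the count over $\F_q$ according to divisors of $m_q$.

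\textbf{Step 1: Reduction from $m$ to $m_q$.} By \Cref{prop:openimmers}, the variety $X_{\aunder,\bunder,m}$ is birational to $Y_{\aunder,\bunder,m}$, but more to the point, for counting $\F_q$-points we argue directly on $Y_{\aunder,\bunder,m,t}$ as defined by $y^m = f_{\aunder,\bunder}(x)$ with $x \in (\F_q \smallsetminus \{0,1\})^n$ and $tx_1\cdots x_n = 1$. For a fixed $x$ in the base with $f_{\aunder,\bunder}(x) \in \F_q^\times$ (automatic since each $x_i, 1-x_i \in \F_q^\times$), the fiber count is $\#\{y \in \F_q^\times : y^m = f_{\aunder,\bunder}(x)\}$, and \Cref{Branch point count} gives that this equals $\#\{y \in \F_q^\times : y^{m_q} = f_{\aunder,\bunder}(x)\}$ since $m_q = \gcd(m, q-1)$. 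Summing over the (common) base $U_t(\F_q)$, which does not depend on $m$, yields $\#Y_{\aunder,\bunder,m,t}(\F_q) = \#Y_{\aunder,\bunder,m_q,t}(\F_q)$. (One should note that here $Y_{\aunder,\bunder,m,t}$ is being used to mean the fiber of the affine model \eqref{eqn:thefam}; the same holds for $X$ by \Cref{prop:openimmers} away from a locus of lower dimension, but the honest statement is at the level of the defining equations.)

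\textbf{Step 2: Decomposition by divisors of $m_q$.} Now use \Cref{Branch point count} in its summed form: for $x$ in the base,
\[
\#\{y \in \F_q^\times : y^{m_q} = f_{\aunder,\bunder}(x)\} = \sum_{k=1}^{m_q} \omega^{k/m_q}(f_{\aunder,\bunder}(x)).
\]
Summing over $x \in (\F_q \smallsetminus \{0,1\})^n$ with $tx_1\cdots x_n = 1$ gives
\[
\#Y_{\aunder,\bunder,m_q,t}(\F_q) = \sum_{k=1}^{m_q} \sum_{\substack{x \in (\F_q \smallsetminus\{0,1\})^n \\ tx_1\cdots x_n=1}} \omega^{k/m_q}(f_{\aunder,\bunder}(x)).
\]
The key combinatorial point is to partition the index set $\{1,\dots,m_q\}$ by the order of the character $\omega^{k/m_q}$: writing $d = m_q/\gcd(k,m_q)$, the character $\omega^{k/m_q}$ has order exactly $d$, and as $k$ ranges over $\{1,\dots,m_q\}$ the exponents $k/m_q \bmod \Z$ with $\gcd(k,m_q) = m_q/d$ are precisely the fractions $k'/d$ with $k' \in (\Z/d\Z)^\times$. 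Regrouping the outer sum over $d \mid m_q$ and then over $k' \in (\Z/d\Z)^\times$, and recognizing that $\omega^{k'/d}(f_{\aunder,\bunder}(x))$ is the summand in the definition \eqref{Pabnt} of $P_{\aunder,\bunder,d,t}(\F_q)$, yields
\[
\#Y_{\aunder,\bunder,m_q,t}(\F_q) = \sum_{d \mid m_q} \sum_{k' \in (\Z/d\Z)^\times} \sum_{\substack{x \in (\F_q\smallsetminus\{0,1\})^n \\ tx_1\cdots x_n=1}} \omega^{k'/d}(f_{\aunder,\bunder}(x)) = \sum_{d \mid m_q} P_{\aunder,\bunder,d,t}(\F_q),
\]
which is the claim.

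\textbf{Main obstacle.} The only genuinely nontrivial bookkeeping is the reindexing in Step 2: verifying that the bijection between $\{1,\dots,m_q\}$ and $\bigsqcup_{d \mid m_q} (\Z/d\Z)^\times$ given by $k \mapsto (d, k')$ with $d = m_q/\gcd(k,m_q)$ and $k' \equiv k \cdot (d/m_q) \pmod d$ is well-defined and exhausts each $(\Z/d\Z)^\times$ exactly once, and that $\omega^{k/m_q}$ and $\omega^{k'/d}$ agree as characters on $\F_q^\times$ (both are well-defined modulo $\Z$ since $m_q \mid q-1$ and $d \mid m_q$). This is the standard refinement of the identity $\sum_{k=1}^{N}\zeta_N^{km} = \sum_{d \mid N}\sum_{k'\in(\Z/d\Z)^\times}\zeta_d^{k'm}$; everything else is formal, relying on the fact that the base locus $\{x : tx_1\cdots x_n = 1,\ x_i \neq 0,1\}$ is independent of $m$ and that $f_{\aunder,\bunder}$ takes values in $\F_q^\times$ there so that \Cref{Branch point count} applies verbatim.
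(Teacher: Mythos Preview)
Your proof is correct and follows essentially the same approach as the paper: apply \Cref{Branch point count} to reduce $m$ to $m_q$ and express the fiber count as a character sum, then partition $\{1,\dots,m_q\}$ according to $\gcd(k,m_q)$ (equivalently, the order of $\omega^{k/m_q}$) to recover the primitive pieces $P_{\aunder,\bunder,d,t}$. The only cosmetic difference is that the paper groups by $d=\gcd(k,m_q)$ and swaps $d\leftrightarrow m_q/d$ at the end, whereas you index by the character order $d$ from the outset; the aside about $X$ and \Cref{prop:openimmers} in Step~1 is unnecessary (the lemma is purely about $Y$) but harmless.
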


\begin{proof}
Applying \Cref{Branch point count} and organizing according to $d=\gcd(k,m_q)$, still with $m_q=\gcd(m,\qq)$ we have
\begin{equation}
\begin{aligned} 
\#Y_{\aunder,\bunder,m,t}(\F_q) &=  
\sum_{\substack{x \in (\F_q \smallsetminus \{0,1\})^n \\ tx_1\cdots x_n=1}} \sum_{k=1}^{m_q} \omega^{k/m_q}(f_{\aunder,\bunder}(x)) \\
&=  \sum_{d \mid m_q} \sum_{k \in (\Z/(m_q/d)\Z)^\times} 
\sum_{\substack{x \in (\F_q \smallsetminus \{0,1\})^n \\ tx_1\cdots x_n=1}} \omega^{dk/m_q}(f_{\aunder,\bunder}(x)) 
=\sum_{d \mid m_q} P_{\aunder,\bunder,m_q/d,t}(\F_q) 
\end{aligned}
\end{equation}
which gives the result swapping $d$ and $m_q/d$.
\end{proof}

Recall $Y'_{\aunder, \bunder, m, I,t}$ from ~\eqref{def: Y a b m I subset}. Since $Y'_{\aunder, \bunder, m, I,t}$ is a twist of $Y_{\aunder_I, \bunder_I, m, t}$, defining
\begin{equation}\label{P' abdIt}
P'_{\aunder,\bunder,m,I,t}(\F_q) \colonequals \sum_{k \in (\Z/m_q\Z)^\times} (-1)^{(k\qq/m)(\sum_{i \not\in I} a_i)} \sum_{\substack{x \in (\F_q\smallsetminus\{0,1\})^{\#I} \\ t\prod_{i \in I} x_i=1}} \omega^{k/m_q}(f_{\aunder_I,\bunder_I}(x)),
\end{equation}
in much the same way we conclude the following.

\begin{cor}\label{slamlem With Twist}
We have
\[ 
\#Y'_{\aunder,\bunder,m,I,t}(\F_q) = \#Y'_{\aunder,\bunder,m_q,I,t}(\F_q) = \sum_{d \mid m_q} P'_{\aunder,\bunder,d,I,t}(\F_q). \]
\end{cor}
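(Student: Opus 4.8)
The plan is to mirror the proof of \Cref{slamlem} essentially verbatim, carrying along the sign twist throughout. The only structural difference between $Y'_{\aunder,\bunder,m,I,t}$ and $Y_{\aunder_I,\bunder_I,m,t}$ is the presence of the factor $(-1)^{\sum_{i\notin I}a_i}$ inside the $m_I$-th power in the defining equation \eqref{def: Y a b m I subset}. First I would record the analogue of the reduction $Y_{\aunder,\bunder,m,t}=Y_{\aunder,\bunder,m_q,t}$: a point of $Y'_{\aunder,\bunder,m,I,t}$ over $\F_q$ requires extracting an $m$-th root in $\F_q^\times$ of a fixed nonzero element (the right-hand side of the first equation of \eqref{def: Y a b m I subset}, which is nonzero precisely because the $x_i$ avoid $0,1$), and by \Cref{Branch point count} the number of such roots depends only on $m_q=\gcd(m,\qq)$, hence the first equality.

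Next I would apply \Cref{Branch point count} directly: writing $g_I(x)\colonequals (-1)^{\sum_{i\notin I}a_i}\prod_{i\in I}(-x_i)^{a_i}(1-x_i)^{b_i-a_i}$ for the quantity whose $m_I$-th root is being taken, we get
\[
\#Y'_{\aunder,\bunder,m_q,I,t}(\F_q)=\sum_{\substack{x\in(\F_q\smallsetminus\{0,1\})^{\#I}\\ t\prod_{i\in I}x_i=1}}\ \sum_{k=1}^{m_q}\omega^{k/m_q}\bigl(g_I(x)\bigr).
\]
Since $\omega^{k/m_q}$ is multiplicative and $\omega^{k/m_q}((-1)^{\sum_{i\notin I}a_i})=\omega((-1)^{\sum_{i\notin I}a_i})^{k\qq/m_q}$, and noting that $\qq/m_q$ is the same whether we compute relative to $m$ or $m_q$ (so the exponent matches the $(-1)^{(k\qq/m)(\sum_{i\notin I}a_i)}$ appearing in \eqref{P' abdIt} once one is careful that $k\qq/m$ and $k\qq/m_q$ have the same parity — indeed $m/m_q$ is odd when $q$ is odd since $m_q\mid\qq$ is even, and the sign is trivial in characteristic $2$), one can pull the sign out of the character and isolate $\omega^{k/m_q}(f_{\aunder_I,\bunder_I}(x))$. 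Then I would reorganize the inner sum over $k=1,\dots,m_q$ according to $d=\gcd(k,m_q)$, exactly as in \Cref{slamlem}, writing $k=dk'$ with $k'\in(\Z/(m_q/d)\Z)^\times$; the sign exponent $(k\qq/m)(\sum_{i\notin I}a_i)=d\,(k'\qq/m)(\sum_{i\notin I}a_i)$ reorganizes compatibly, yielding $\sum_{d\mid m_q}P'_{\aunder,\bunder,m_q/d,I,t}(\F_q)$, and swapping $d\leftrightarrow m_q/d$ gives the claimed formula.

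The only genuinely delicate point — and the step I expect to need the most care — is the bookkeeping of the sign exponent: verifying that the factor $(-1)^{\sum_{i\notin I}a_i}$ survives the $m_q$-th-root extraction as the correct power $(-1)^{(k\qq/m)(\sum_{i\notin I}a_i)}$ and that this exponent behaves linearly under the substitution $k\mapsto dk'$ so that the regrouping goes through cleanly. Everything else is a direct transcription of the proof of \Cref{slamlem}, using that $Y'_{\aunder,\bunder,m,I,t}$ is, by construction, precisely the twist of $Y_{\aunder_I,\bunder_I,m,t}$ recorded after \eqref{def: Y a b m I subset}, so no new geometric input is required.
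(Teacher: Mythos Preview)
Your approach is exactly what the paper does: its proof reads in full ``The proof is almost verbatim that of \Cref{slamlem}, after taking into consideration the twist,'' and your plan carries this out.

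One caveat on the step you yourself flagged as delicate: the parity claim that $m/m_q$ is odd whenever $q$ is odd is false (take $m=4$, $q=3$: then $m_q=2$ and $m/m_q=2$), but fortunately it is also unnecessary. In the definition \eqref{P' abdIt}, once you substitute a divisor $d\mid m_q$ for $m$ you have $d_q=\gcd(d,\qq)=d$, so the sign exponent there reads $(k\qq/d)\sum_{i\notin I}a_i$, and $\omega^{k/d}\bigl((-1)^{\sum_{i\notin I}a_i}\bigr)=(-1)^{(k\qq/d)\sum_{i\notin I}a_i}$ on the nose---no comparison between $\qq/m$ and $\qq/m_q$ is required. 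Separately, be careful that the defining equation \eqref{def: Y a b m I subset} involves $y^{m_I}$, not $y^m$, so you are extracting an $m_I$-th root; the first equality still follows from \Cref{Branch point count} since $\gcd(m_I,\qq)=\gcd(\{b_i-a_i:i\notin I\},m,\qq)=(m_q)_I=\gcd((m_q)_I,\qq)$.
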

\begin{proof}
    The proof is almost verbatim that of \Cref{slamlem}, after taking into consideration the twist.\end{proof}

\begin{rmk}
We use the word \emph{primitive} because we are motivated by thinking about these point counts as arising from the action of Frobenius on compactly supported \'etale cohomology; then the group scheme $\mu_m$ acts, and we think of the above pieces as corresponding to those where this group acts by a primitive $d$th root of unity for $d \mid m_q$.  Unfortunately, we are not able to make this analogy a method of proof, because we do not have the necessary control over this cohomological action---so we must be indirect instead.  
\end{rmk}

\subsection{Setup}

By definition, the scheme $X_{\aunder,\bunder,m}$ is naturally stratified based on whether or not $x_i=1$.  We recall \eqref{def: Y a b m I subset}, that for $I \subseteq \{1,\dots,n\}$, the restriction of $X_{\aunder,\bunder,m}$ to the subset where $x_i \neq 1$ for $i \in I$ and $x_i=1$ for $i \not\in I$ is given by $Y'_{\aunder,\bunder,m,I}$ by \Cref{lem:xi1}.  Hence
\begin{equation} \label{eqn:xunder}
\#X_{\aunder,\bunder,m,t}(\F_q) = \sum_{I \subseteq \{1,\dots,n\}} 
\#Y'_{\aunder, \bunder, m, I,t}(\F_q) = \sum_{I \subseteq \{1,\dots,n\}} 
\#Y'_{\aunder, \bunder, m_q, I,t}(\F_q).
\end{equation}
We remark and emphasize that this is exactly possible due to the construction of the partial compactification.  In light of \eqref{eqn:xunder}, without loss of generality we may suppose that $m=m_q=\gcd(m,\qq)$.  

Next, we decompose the point counts on the open sets into their primitive parts using \Cref{slamlem With Twist}. By substituting into \eqref{eqn:xunder}, recalling $\aunder_I,\bunder_I,m_I,$ in~\eqref{aIbImI}, and interchanging the order of summation, we obtain:
\begin{equation}\begin{aligned} \label{interchangedm}
\sum_{I} \#Y'_{\aunder, \bunder, m, I,t}(\F_q) &= 
\sum_{I} \sum_{d \mid m_I} \sum_{ k \in (\Z/d\Z)^{\times}} \sum_{\substack{(x_i) \in (\F_q \setminus\{0,1\})^{I} \\ t \prod_{i \in I}x_i=1}}  \omega^{k \sum_{i \notin I} a_i/d}(-1) \omega^{k/d}(f_{\aunder_I,\bunder_I,m_I}(x)) \\
&= 
\sum_{d \mid m} \sum_{\substack{I \\ d \mid m_I}} \sum_{ k \in (\Z/d\Z)^{\times}} \sum_{\substack{(x_i) \in (\F_q \setminus\{0,1\})^{I}\\ t \prod_{i \in I}x_i=1}}  \omega^{k \sum_{i \notin I} a_i/d}(-1) \omega^{k/d}(f_{\aunder_I,\bunder_I,m_I}(x))\\
&= \sum_{d \mid m} \sum_{\substack{I \\ d \mid m_I}} P'_{\aunder,\bunder,d,I,t}(\F_q).
\end{aligned}\end{equation}

To proceed, we need to study the fibers of the map $I \mapsto m_I$.  

\begin{lem}\label{lem:poset structure}
The map
\[ 
\begin{aligned}
\mathcal{P}(\{1,\dots,n\}) &\to \{d : d \mid m\}  \\
I &\mapsto m_I = \gcd(\{b_i-a_i: i \not\in I\} \cup \{m\})
\end{aligned} \]
from the power set to the set of divisors of $m$ is a map of posets: if $I \subseteq I'$ then $m_I \mid m_{I'}$.  
\end{lem}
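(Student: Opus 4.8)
The claim is that $I \subseteq I'$ implies $m_I \mid m_{I'}$, where $m_I = \gcd(\{b_i - a_i : i \notin I\} \cup \{m\})$.

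The plan is to argue directly from the definition of $\gcd$ as the generator of an ideal, or equivalently via the divisibility characterization of gcd. The key observation is that enlarging $I$ shrinks the complement: if $I \subseteq I'$ then $\{1,\dots,n\} \smallsetminus I' \subseteq \{1,\dots,n\} \smallsetminus I$. Consequently the set $\{b_i - a_i : i \notin I'\} \cup \{m\}$ is a \emph{subset} of $\{b_i - a_i : i \notin I\} \cup \{m\}$. The gcd of a set divides the gcd of any subset of it that still contains enough elements — more precisely, if $A \subseteq B$ are finite subsets of $\Z$, then $\gcd(B) \mid \gcd(A)$, since $\gcd(B)$ divides every element of $B$, hence every element of $A$, hence divides $\gcd(A)$. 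Applying this with $B = \{b_i - a_i : i \notin I\} \cup \{m\}$ and $A = \{b_i - a_i : i \notin I'\} \cup \{m\}$ gives $m_I = \gcd(B) \mid \gcd(A) = m_{I'}$ immediately.

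I would also note, for completeness, that $m_I \mid m$ for every $I$ (so the map indeed lands in the set of divisors of $m$): this is the special case where $m$ is always included in the defining set, so $m_I = \gcd(\cdots, m)$ divides $m$. This makes the codomain statement part of the lemma legitimate. No subtlety arises from the empty complement case either: when $I = \{1,\dots,n\}$, the set is just $\{m\}$ and $m_I = m$, which is consistent with $m_I \mid m_{I'}$ forcing $m_I = m$ at the top of the poset.

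There is essentially no obstacle here; the only thing to be careful about is the direction of the divisibility — enlarging $I$ \emph{removes} constraints from the gcd, so the gcd can only \emph{grow} in the divisibility order, matching the poset map claim $m_I \mid m_{I'}$. I would write this up in two or three sentences. A clean formulation: for $A \subseteq B$ finite subsets of $\Z_{\ne 0}$ (or with $m$ ensuring nonemptiness), $\gcd B \mid \gcd A$; then specialize.

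\begin{proof}
If $A \subseteq B$ are finite sets of integers with $B$ containing at least one nonzero element, then $\gcd(B) \mid \gcd(A)$: indeed $\gcd(B)$ divides every element of $B$ and hence every element of $A$, so it divides $\gcd(A)$. Now suppose $I \subseteq I'$. Then $\{1,\dots,n\}\smallsetminus I' \subseteq \{1,\dots,n\}\smallsetminus I$, so
\[
\{b_i - a_i : i \notin I'\} \cup \{m\} \;\subseteq\; \{b_i - a_i : i \notin I\} \cup \{m\}.
\]
Applying the observation with $B = \{b_i - a_i : i \notin I\} \cup \{m\}$ and $A = \{b_i - a_i : i \notin I'\} \cup \{m\}$ yields $m_I \mid m_{I'}$. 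Finally, since $m$ lies in the defining set for every $I$, we have $m_I \mid m$, so the map indeed takes values in the divisors of $m$.
\end{proof}
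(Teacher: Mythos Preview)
Your proof is correct and is exactly the unpacking the paper leaves implicit: the paper's proof is a single sentence (``This is straightforward upon unravelling the partial orderings''), and your argument via $A \subseteq B \Rightarrow \gcd(B) \mid \gcd(A)$ is precisely that unravelling.
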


\begin{proof}
This is straightforward upon unravelling the partial orderings.  
\end{proof}

To organize the subsets $I$ as in \eqref{interchangedm}, it is natural to look at the subset of indices $i$ such that $d \mid (b_i-a_i)$; in terms of posets, by definition this set is the maximal $I$ such that $d \mid m_I$. However, it is more convenient to work with the complement (so a minimal set).  We define
\begin{equation} \label{eqn:Id}
I_d \colonequals \{ i : d \nmid (b_i-a_i) \} \subseteq [n] \colonequals \{1,\dots,n\}.
\end{equation}
We thus can write, using \Cref{lem:poset structure},
\begin{equation} \label{get it down to P'd}
\sum_{I} \#Y'_{\aunder, \bunder, m, I,t}(\F_q) = \sum_{d \mid m} \sum_{I \supseteq I_d}  P'_{\aunder,\bunder,d,I,t}(\F_q).
\end{equation}

\subsection{Explicit hypergeometric point count}

We now observe a remarkable M\"obius-like cancellation; this theorem forms the technical heart of the paper.  

\begin{thm}\label{hypergeometric main}
Let 
$$
Q'_{d,t} = Q'_{\aunder,\bunder,d,t} \colonequals \sum_{I \supseteq I_d}  P'_{\aunder,\bunder,d,I,t}(\F_q).
$$
\begin{enumalph}
\item If $[n] = I_d$ then 
$$
Q'_{d,t} = - \sum_{k \in (\Z/d\Z)^\times} H_q(k\aunder/d,k\bunder/d,t).
$$
\item Suppose $[n] \ne I_d$ and $a_i - a_j \in d\Z$ for all $i,j \in [n]\setminus I_d$.  Let $j$ such that $j \notin [n]\setminus I_d$, and let $c=a_j$.  Then
\begin{equation} \label{eqn:pdisotyp}
\begin{aligned}
Q'_{d,t} &= (-1)^{\#I_d} \sum_{ k \in (\Z/d\Z)^{\times}}  \omega^{-kc / d}(t) (\qq)^{n-\#I_d-1} \\
	&\qquad \cdot \Biggl( \prod_{\substack{ i \in I_d\\c - b_i \notin d\Z}} \psi_{\tfrac{k(a_i - c)}{d}, \tfrac{k(b_i - a_i)}{d}} \Biggr) \Biggl(\omega^{k\sum_{i=1}^n a_i / d}(-1) \prod_{\substack{ i \in I_d\\ c - b_i \in d\Z}} \omega^{k(a_i-c)/d}(-1) \Biggr).
\end{aligned}
\end{equation}
\item If $[n] \ne I_d$ and there exists $i,j \in [n]\setminus I_d$ so that $a_i - a_j \notin d\Z$, then $Q'_{d,t} = 0$.
\end{enumalph}
\end{thm}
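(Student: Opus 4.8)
The plan is to compute $Q'_{d,t}$ by first substituting the definition \eqref{P' abdIt} of the primitive twisted point counts and interchanging the order of summation so that the innermost object is, for each choice of $k \in (\Z/d\Z)^\times$, a single character sum over the points $(x_i)_{i \in I}$ with $t\prod_{i\in I}x_i = 1$; then to organize the sum over $I \supseteq I_d$ according to the ``extra'' indices $J \colonequals I \setminus I_d \subseteq [n]\setminus I_d$. For an index $i \in [n]\setminus I_d$ we have $d \mid (b_i - a_i)$, so the factor $(1-x_i)^{b_i-a_i}$ in $f_{\aunder,\bunder}$ contributes a character of order dividing $\qq/d \cdot$ (trivial mod $d$); the point is that for such $i$ the relevant character applied to $1-x_i$ is a \emph{$d$-th power}, hence trivial when raised to the exponent $k\qq/m$ with $\gcd(k,d)=1$ appropriately normalized — more precisely, $\omega^{k(b_i-a_i)/d}$ is a genuine character and summing $\omega^{ka_i/d}(-x_i)$ against the constraint produces Jacobi-type sums only through the factors $(-x_i)^{a_i}$. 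I would isolate these contributions and recognize the sum over whether $i \in J$ or not as introducing a factor that will telescope.

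The key computational step is the following. After peeling off the indices in $I_d$ (which always appear) and summing over the subsets $J \subseteq [n]\setminus I_d$ of ``active'' extra variables, the sum over the variables $x_i$, $i \in [n]\setminus I_d$, decouples from the hypergeometric core up to the multiplicative constraint $t\prod_{i\in I}x_i = 1$. Using \Cref{main hypergeometric formula} (applied to the $I_d$-variables, which carry nondegenerate parameters precisely because $i \in I_d \iff d \nmid (b_i-a_i)$) together with the elementary evaluation of $\#U_\bullet(\F_q)$ from the Example after \eqref{Pabnt}, the sum over $J$ collapses to a M\"obius-like alternating sum $\sum_{J} (-1)^{|J|}(\cdots)$. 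In case (a), $I_d = [n]$ so there are no extra variables, $J = \emptyset$, and the statement is exactly \Cref{main hypergeometric formula} summed over $k \in (\Z/d\Z)^\times$ with the sign; this case is essentially immediate. In case (b), the hypothesis $a_i - a_j \in d\Z$ for all $i,j \notin I_d$ means that for each fixed $k$ the character $\omega^{ka_i/d}(-x_i)$ is the \emph{same} character $\omega^{kc/d}$ for every extra index $i$, so the sum over the $x_i$ ($i \notin I_d$) subject to $\prod x_i = (t\prod_{i\in I_d}x_i)^{-1}$ becomes a clean power-of-$\qq$ times a single monomial character evaluated at $t$ (this is where the $\omega^{-kc/d}(t)$ and $(\qq)^{n-\#I_d-1}$ appear), and the remaining $I_d$-sum is a product of Jacobi sums which one repackages as the Gr\"ossencharakter factors $\psi_{k(a_i-c)/d,\,k(b_i-a_i)/d}$ via \eqref{ourheckegross} and Proposition~\ref{BCM Jacobi Prop}; the sign bookkeeping $(-1)^{\#I_d}$ and the product of $\omega^{\cdots}(-1)$ factors comes from tracking the $(-1)^{\sum_{i\notin I}a_i}$ twist in \eqref{P' abdIt} together with the $-x_i$ versus $x_i$ substitution in \Cref{mobius trick}.

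In case (c), the hypothesis furnishes a pair $i_0, j_0 \in [n]\setminus I_d$ with $a_{i_0} - a_{j_0} \notin d\Z$. Here I would exploit the symmetry in the extra variables: the relevant inner character sum over $x_{i_0}$ (with $a_{i_0}$) and over $x_{j_0}$ (with $a_{j_0}$), after the substitution eliminating one variable via the constraint $t\prod x_i = 1$, contains a factor $\sum_{x} \omega^{k(a_{i_0}-a_{j_0})/d}(x)$ over $x \in \F_q^\times \setminus\{$ finitely many $\}$, which vanishes by orthogonality of characters because $k(a_{i_0}-a_{j_0})/d \notin \Z$. One must be a little careful that removing ``$x \neq 1$'' or $x=0$ points only shifts by bounded terms that also cancel; the cleanest route is to note that the full sum over all $J \subseteq [n]\setminus I_d$ reassembles, by inclusion-exclusion, into a sum over $(x_i)_{i\in[n]\setminus I_d} \in (\F_q^\times)^{[n]\setminus I_d}$ with no ``$\neq 1$'' restriction (the Example computation does exactly this reorganization), at which point the character orthogonality in the direction $a_{i_0}-a_{j_0}$ kills everything.

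\textbf{Main obstacle.} The genuine difficulty is the bookkeeping in case (b): correctly normalizing the exponents $k/m_q$ versus $k/d$ across the different values of $d\mid m$, tracking the root-of-unity twists coming simultaneously from $(-1)^{\sum_{i\notin I}a_i}$, from the $-x_i \leftrightarrow x_i$ M\"obius substitutions, and from the relation $g(\alpha)g(-\alpha) = \omega^\alpha(-1)q$ when collapsing Gauss-sum ratios into the $\psi$'s, and verifying that the separate treatment of the extra indices $i$ with $c - b_i \in d\Z$ (which yield $\omega^{k(a_i-c)/d}(-1)$ rather than a $\psi$-factor, by Corollary~\ref{rest for degen case}) versus $c - b_i \notin d\Z$ comes out exactly as written. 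The M\"obius cancellation itself (cases (a) and (c)) is structurally clean once the reorganization over $J$ is set up; it is the precise sign and normalization in (b) that will require patience.
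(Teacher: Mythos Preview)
Your approach is correct and parallels the paper's, with one organizational difference. The paper singles out a fixed index $j \in [n]\setminus I_d$, pairs each $I \not\ni j$ with $I \cup \{j\}$ (so that $x_j$ now ranges over all of $\F_q^\times$, allowing the substitution $x_j = (t\prod_{i\in I}x_i)^{-1}$ to kill the constraint), and only \emph{then} sums over the remaining subsets $I_d \subseteq I \subseteq [n]\setminus\{j\}$; this yields a product $\prod_{i\in[n]\setminus I_d\setminus\{j\}}(1 + J(k(a_i-a_j)/d,0))$ whose factors are $((-1)+1)=0$ in case~(c) and $((\qq-1)+1)=\qq$ in case~(b). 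You instead reassemble the full sum over $J = I\setminus I_d$ \emph{first}, observing that because $d \mid (b_i-a_i)$ the character on $1-x_i$ is trivial, so the $i \in J$ contribution $\omega^{ka_i/d}(-x_i)$ for $x_i \neq 0,1$ and the $i \notin I$ twist $\omega^{ka_i/d}(-1)$ (i.e.\ the value at $x_i=1$) glue to a sum over $x_i \in \F_q^\times$; you then eliminate the constraint afterward by a change of variables on the extra block. Both orderings land on the same Jacobi-sum product over $i\in I_d$ and invoke Corollaries~\ref{cor:case by case Jacobi} and~\ref{rest for degen case} identically. One small correction to your write-up: \Cref{main hypergeometric formula} is used only in case~(a); in~(b) the $I_d$-sum becomes a \emph{product} of independent Jacobi sums (not a hypergeometric sum) once the constraint is absorbed into the extra variables, and the reassembly over $J$ carries no alternating sign $(-1)^{|J|}$ --- it is the straight union $(\F_q\setminus\{0,1\}) \cup \{1\} = \F_q^\times$, not an inclusion--exclusion.
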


\begin{proof}
For (a), we start with the definition \eqref{P' abdIt} and use the hypothesis to simplify to
\[  \sum_{I \supseteq I_d}  P'_{\aunder,\bunder,d,I,t}
    = \sum_{k \in (\Z/d\Z)^\times} \sum_{\substack{x \in (\F_q \smallsetminus \{0,1\})^n \\
    tx_1\cdots x_n=1}} \prod_{i=1}^{n} \omega^{ka_i/d}(-x_i) \omega^{k(b_i-a_i)/d}(1-x_i). \]
Applying \Cref{main hypergeometric formula} obtains the result.

For (b) and (c), choose some $j \in [n] \setminus I_d$, i.e., $b_j - a_j \in d \Z$ and write $I_j := I\cup\{j\}$.  Then
\begin{equation} \label{532} \begin{aligned}
 Q_{d,t}' &=  \sum_{ \substack{I \\ I_d \subseteq I \subseteq [n] \setminus\{j\} }}  P'_{\aunder,\bunder,d,I,t}. +  P'_{\aunder,\bunder,d,I_j,t}.\\
	&=  \sum_{I} \sum_{ k \in (\Z/d\Z)^{\times}}  \prod_{i \notin I} \omega^{ka_i/d}(-1) \left(\sum_{\substack{(x_i) \in (\F_q \setminus\{0,1\})^{I}\\ t \prod_{i \in I}x_i=1}} \prod_{i \in I} \omega^{k a_i /d}(-x_i) \omega^{k(b_i - a_i) / d}(1-x_i)\right.  \\
	& \qquad \left. \qquad\qquad\qquad+ \sum_{\substack{(x_i) \in (\F_q \setminus\{0,1\})^{I_j} \\ t \prod_{i \in I_j }x_i=1}}  \omega^{ka_j/d}(x_j)  \prod_{i \in I} \omega^{k a_i /d}(-x_i) \omega^{k(b_i - a_i) / d}(1-x_i)\right) \\
	&=  \sum_{I} \sum_{k}   \omega^{k\sum_{i\notin I}a_i/d}(-1) \sum_{\substack{(x_i) \in (\F_q \setminus\{0,1\})^{I} \\ x_j \in \F_q \setminus\{0\}\\ t \prod_{i \in I_j }x_i=1}} \omega^{ka_j/d}(x_j)  \prod_{i \in I} \omega^{k a_i /d}(-x_i) \omega^{k(b_i - a_i) / d}(1-x_i).
\end{aligned}\end{equation}
The first equality is from partitioning the summation over when $I$ contains $j$ or not, the second is pulling out a common character, and the third is realizing one can combine the sums into a summation over all $x_j\in \F_q$ since the first sum is when $x_j = 1$. We now substitute $ x_j = (t \prod_{i \in I}x_i)^{-1}$ and continue.

For each $I \supseteq I_d$ and $k \in (\Z/d\Z)^\times$, we have
\begin{equation} \label{533} \begin{aligned}
 	&\prod_{i \notin I} \omega^{ka_i/d}(-1) \sum_{(x_i) \in (\F_q \setminus\{0,1\})^{I}} \omega^{-ka_j/d}(t \textstyle{\prod}_{i \in I} x_i)  \prod_{i \in I} \omega^{k a_i /d}(-x_i) \omega^{k(b_i - a_i) / d}(1-x_i) \\
	&\qquad =  \omega^{k(\sum_{i=1}^n a_i)/d}(-1)\omega^{-ka_j / d}(t) \sum_{(x_i) \in (\F_q \setminus\{0,1\})^{I}} \prod_{i \in I} \omega^{k (a_i-a_j) /d}(-x_i) \omega^{k(b_i - a_i) / d}(1-x_i)\\
	&\qquad =   \omega^{k(\sum_{i=1}^n a_i)/d}(-1)\omega^{-ka_j / d}(t)  \prod_{i \in I} \sum_{x_i \in (\F_q \setminus\{0,1\})}\omega^{k (a_i-a_j) /d}(-x_i) \omega^{k(b_i - a_i) / d}(1-x_i)\\
	&\qquad =  \omega^{k(\sum_{i=1}^n a_i)/d}(-1)\omega^{-ka_j / d}(t)  \prod_{i \in I} J(k(a_i - a_j)/d, k(b_i - a_i) /d).
\end{aligned}\end{equation}

We focus on the quantity $J(k(a_i - a_j)/d, k(b_i - a_i) /d)$. This quantity can be made explicit. If $i \in I_d$, then we apply \Cref{cor:case by case Jacobi} when $a_j - b_i \notin d\Z$ and \Cref{rest for degen case} when $a_j - b_i \in d\Z$ to obtain  \begin{equation}
J(k(a_i - a_j)/d, k(b_i - a_i) /d) =\begin{cases} \psi_{\tfrac{k(a_i - a_j)}{d}, \tfrac{k(b_i - a_i)}{d}}, & \text{ if $b_i - a_j \notin d\Z$}; \\ -\omega^{k(a_i-a_j)/d}(-1), & \text{ if $b_i - a_j \in d\Z$}.\end{cases}
\end{equation}
If $i \notin I_d$ then $b_i - a_i \in d\Z$, so
\begin{equation}
J(k(a_i - a_j)/d, k(b_i - a_i) /d) = \sum_{x \in \F_q \setminus\{0,1\}} \omega^{k(a_i - a_j)/d}(x) = \begin{cases} -1 & \text{if $a_i - a_j \notin d\Z$}; \\ \qq -1 & \text{ if $a_i - a_j \in d\Z$}.\end{cases} 
\end{equation}
Substituting these into \eqref{533}, summing over $I$ and $k$, and substituting back into \eqref{532} we obtain
\begin{equation}\begin{aligned}
 Q_{d,t}' &= \sum_{I \supseteq I_d} \sum_{ k \in (\Z/d\Z)^{\times}} \sum_{\substack{(x_i) \in (\F_q \setminus\{0,1\})^{I}\\ t \prod_{i \in I}x_i=1}}  \omega^{k \sum_{i \notin I} a_i/d}(-1) \omega^{k/d}(f_{\aunder_I,\bunder_I,m_I}) \\
 	&=  \sum_{ k \in (\Z/d\Z)^{\times}}  \omega^{k\sum_{i=1}^n a_i / d}(-1)\omega^{-ka_j / d}(t) \left( \prod_{\substack{ i \in I_d\\ a_j - b_i \notin d\Z}} \psi_{\tfrac{k(a_i - a_j)}{d}, \tfrac{k(b_i - a_i)}{d}} \right)  \\
	&\qquad \left(\prod_{\substack{ i \in I_d\\ a_j - b_i \in d\Z}} -\omega^{k(a_i-a_j)/d}(-1) \right)\cdot \sum_{ \substack{I\\I_d \subseteq I \subseteq [n] \setminus\{j\} }} \left(\prod_{\substack{ i \in I \setminus I_d\\a_i - a_j \notin d\Z} } -1 \right) \left( \prod_{\substack{ i \in I \setminus I_d \\a_i - a_j \in d\Z} } \qq-1 \right) \end{aligned}\end{equation}
We focus on this second line. We can compute by rearranging the sum into a product of binomials
\begin{equation}
 \sum_{ \substack{I \\I_d \subseteq I \subseteq [n] \setminus\{j\} }} \left(\prod_{\substack{ i \in I \setminus I_d \\a_i - a_j \notin d\Z} } (-1) \right) \left( \prod_{\substack{ i \in I \setminus I_d \\ a_i - a_j \in d\Z} } (\qq-1) \right)  = \left(\prod_{\substack{ i \in I \setminus I_d \\ a_i - a_j \notin d\Z} } ((-1) + 1) \right)\left( \prod_{\substack{ i \in I \setminus I_d \\ a_i - a_j \in d\Z} } ((\qq-1) + 1) \right).
\end{equation}
So the entire quantity vanishes if there exists $j,k \in [n] \setminus I_d$ so that $a_k - a_j \notin d \Z$, proving (c). Otherwise, if $ a_k - a_j \in d\Z$ for all $j, k \in [n] \setminus I_d$, we obtain 
\begin{equation} \begin{aligned}
 Q_{d,t}' &= \sum_{I \supseteq I_d}\sum_{ k \in (\Z/d\Z)^{\times}} \sum_{\substack{(x_i) \in (\F_q \setminus\{0,1\})^{I}\\ t \prod_{i \in I}x_i=1}}  \omega^{k \sum_{i \notin I} a_i/d}(-1) \omega^{k/d}(f_{\aunder_I,\bunder_I,m_I}) \\
 	&= \sum_{ k \in (\Z/d\Z)^{\times}}  \omega^{k\sum_{i=1}^n a_i / d}(-1)\omega^{-ka_j / d}(t) (\qq)^{n-\#I_d-1} \\
	&\qquad \cdot \Biggl( \prod_{\substack{ i \in I_d\\ a_j - b_i \notin d\Z}} \psi_{\tfrac{k(a_i - a_j)}{d}, \tfrac{k(b_i - a_i)}{d}} \Biggr) \Biggl(\prod_{\substack{ i \in I_d\\ a_j - b_i \in d\Z}} -\omega^{k(a_i-a_j)/d}(-1) \Biggr) \end{aligned}\end{equation}
as claimed.
\end{proof}

\begin{rmk}\label{rmk: upshot of thm}
We are led to think of the outcome of \Cref{hypergeometric main} as some indication that in a degenerate setting, the hypergeometric motive becomes a Jacobi motive.  The expression in \Cref{hypergeometric main}(b) is complicated because we work over the torus, leaving pieces in every dimension.  We leave the further pursuit of this notion of degeneration for future work.  
\end{rmk}

\subsection{Proof of main result}

We now are now staged to complete the proof of our main result, taking the point counts over finite fields and assembling them into the zeta function.  

Recalling \textsection\ref{sec:hyplseries}, let $S = S(\pmb{\alpha},\pmb{\beta},t)$ be the set of bad primes.  Then by definition
\begin{equation}  \label{eqn:zetasP}
\zeta_S(X_{\aunder,\bunder,m,t},s) = \prod_{p \not \in S} Z_p(X_{\aunder,\bunder,m,t},p^{-s}) 
\end{equation}
where $Z_p(X_{\aunder,\bunder,m,t},T) \in \Q(T)$ is the zeta function (exponential generating series) over $\F_p$.  

We recall (\Cref{thm:heckechar}) that Gauss sums give rise to Hecke Grossencharacters for $\Q(\zeta_m)$.  Anticipating \eqref{eqn:pdisotyp} (in \Cref{hypergeometric main}(b)), we will need a twist, defined as follows.  Let $d \mid m$, and suppose that $\aunder/d,\bunder/d$ is isotypically degenerate for $c/d$. As before, we write  $e(\aunder/d,\bunder/d)$ for the multiplicity of $c/d$ in $\aunder/d$. Then for $\frakp \subseteq \Z[\zeta_d]$ above a good prime $p$, 
we define 
\begin{equation} \label{eqn:explicithypergross}
\psi_{\aunder,\bunder,d,t}(\frakp) = \omega^{-c / d}(t) \Biggl(\omega^{\sum_{i=1}^n a_i / d}(-1) \prod_{\substack{ i \in I_d\\ c - b_i \in d\Z}} \omega^{k(a_i-c)/d}(-1) \Biggr) \Biggl( \prod_{\substack{ i \in I_d\\c - b_i \notin d\Z}} \psi_{\tfrac{a_i - c}{d}, \tfrac{b_i - a_i}{d}}(\frakp) \Biggr)
\end{equation}
where we recall \eqref{eqn:Id}
\[ I_d = \{ i : d \nmid (b_i-a_i) \} \subseteq [n] \colonequals \{1,\dots,n\}. \]

\begin{thm} \label{thm:mainthm-insitu}
For $p$ good, we have
\[ Z_p(X_{\aunder,\bunder,m,t},T) = \prod_{d \mid m} 
\begin{cases}
L_p(H(\aunder/d,\bunder/d,t),\Q(\zeta_d),T), & \text{if $\aunder/d,\bunder/d$ is nondegenerate;} \\
Z_p((\Gm)^{e(\aunder/d,\bunder/d)-1}, \Q(\zeta_d), T, \psi_{\aunder,\bunder,d,t})^{(-1)^{n-e(\aunder/d,\bunder/d)}}, & \begin{aligned}
    &\parbox[t]{0.35\linewidth}{if $\aunder/d,\bunder/d$ is isotypically \\ \phantom{yup} degenerate;}
  \end{aligned} \\
1, & \text{else}.
\end{cases} \]
\end{thm}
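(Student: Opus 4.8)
The plan is to assemble the finite-field point counts established earlier into the local zeta function by unwinding the stratification of $X_{\aunder,\bunder,m}$. Since $Z_p(X_{\aunder,\bunder,m,t},T) = \exp\left(\sum_{r\ge 1} \#X_{\aunder,\bunder,m,t}(\F_{p^r}) T^r/r\right)$, it suffices to understand $\#X_{\aunder,\bunder,m,t}(\F_q)$ for all $q = p^f$ with $p$ good and then recognize the resulting exponential generating series. First I would combine \eqref{get it down to P'd} with the definition of $Q'_{d,t}$ to write
\[
\#X_{\aunder,\bunder,m,t}(\F_q) = \sum_{d \mid m_q} Q'_{\aunder,\bunder,d,t}(\F_q),
\]
and then invoke \Cref{hypergeometric main} to evaluate each $Q'_{d,t}$ according to the trichotomy: the nondegenerate case $[n] = I_d$, the isotypically degenerate case, and the vanishing case. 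Note that $d \mid m_q$ exactly says $d \mid \gcd(m,q-1)$, and the conditions appearing in \Cref{hypergeometric main} (e.g.\ $[n]=I_d$, which unravels to $\aunder/d,\bunder/d$ being nondegenerate modulo $d$) depend only on $d$ and the parameters, not on $q$; so for a fixed $d$, the term $Q'_{d,t}(\F_{q})$ is either identically zero, or a sum over $k \in (\Z/d\Z)^\times$ of hypergeometric sums $H_q$, or a sum over $k$ of Grossencharacter values times powers of $q-1$. Summing only over those $d \mid m$ for which $d \mid q-1$ is harmless, since for $d \nmid q-1$ there are no primitive $d$-th roots of unity and the corresponding piece does not arise; equivalently one checks the three candidate expressions degenerate correctly.

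Next, for each fixed $d \mid m$ I would package the $r$-varying family $\{Q'_{d,t}(\F_{p^{rf_d}})\}$ (where $f_d$ is the order of $p$ in $(\Z/d\Z)^\times$) into its exponential generating series. In the nondegenerate case, part (a) of \Cref{hypergeometric main} gives $Q'_{d,t} = -\sum_{k \in (\Z/d\Z)^\times} H_q(k\aunder/d, k\bunder/d, t)$; grouping the $k$'s into cosets modulo $\langle p\rangle$ and using \Cref{lem:fofd} (so that $H_q(k\aunder/d,\dots)$ over the various $q = p^{f_d}$ assemble coset-by-coset) together with the definitions \eqref{Lpyup} of $L_q$ and $L_p$, the contribution of this $d$ to $Z_p$ is exactly $L_p(H(\aunder/d,\bunder/d,t),\Q(\zeta_d),T)$; here the sign $-$ on $Q'_{d,t}$ and the fact that we are looking at $\exp(\sum \#X \cdot T^r/r)$ rather than its reciprocal must be reconciled, which works out because the hypergeometric piece of $X$ has odd (middle) cohomological weight — this is the standard mechanism and I would cite \Cref{Katzthm} for polynomiality. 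In the isotypically degenerate case, part (b) expresses $Q'_{d,t}$ as $(-1)^{\#I_d}(q-1)^{n-\#I_d-1}$ times $\sum_{k} \psi_{\aunder,\bunder,d,t}^{(k)}$; recognizing $\#I_d = n - e$ where $e = e(\aunder/d,\bunder/d)$ (the multiplicity of the degenerate parameter accounts precisely for the indices \emph{not} in $I_d$), this is $(-1)^{n-e}(q-1)^{e-1}$ times a sum of Grossencharacter values, which is $(-1)^{n-e}$ times the point count $\#(\Gm)^{e-1}(\F_q)$ twisted by $\psi_{\aunder,\bunder,d,t}$ — whence the exponential generating series is $Z_p((\Gm)^{e-1},\Q(\zeta_d),T,\psi_{\aunder,\bunder,d,t})^{(-1)^{n-e}}$ by the same coset-assembly over $\Q(\zeta_d)/\Q$. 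The vanishing case (c) contributes the trivial factor $1$.

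The main obstacle I anticipate is bookkeeping the Galois/coset descent from $\Q(\zeta_d)$-valued quantities ($H_q$ and $\psi_\frakp$, which live over primes $\frakp \mid p$ of $\Z[\zeta_d]$) down to the $\Q$-rational local factor $Z_p(\cdots) \in \Q(T)$, making sure the product over $k \in (\Z/d\Z)^\times$ that appears in \Cref{hypergeometric main} matches the product over $k \in (\Z/d\Z)^\times/\langle p\rangle$ of Frobenius-twisted Euler factors in \eqref{Lpyup} and in the definition of $Z_p((\Gm)^{e-1},\Q(\zeta_d),T,\psi)$. Concretely: for a good prime $p$, $q = p^{f_d}$ with $f_d = \ord_{(\Z/d\Z)^\times}(p)$, each $\frakp \mid p$ has residue field $\F_q$, there are $\phi(d)/f_d$ such primes indexed by the cosets $(\Z/d\Z)^\times/\langle p\rangle$, and the relation $\sigma_k(\psi) = \psi_{k\alpha,k\beta}$ from \Cref{thm:heckechar} together with $H_q(p\aunder,p\bunder,t) = H_q(\aunder,\bunder,t^p)$ from \Cref{lem:fofd} is what makes the $r$-fold iterates line up. A secondary subtlety is justifying that omitting the divisors $d \mid m$ with $d \nmid q-1$ from the sum $\sum_{d\mid m_q}$ is consistent with writing a single clean product $\prod_{d \mid m}$ in the statement: for such $d$ the local factor at $p$ is forced to be $1$ (no primitive $d$-th roots of unity over $\F_q$ means the $\mu_d$-isotypic piece is absent, so $Q'_{d,t}$ contributes nothing), and one should remark this explicitly. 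Once these indexing issues are handled, the theorem follows by taking $\exp$ of the term-by-term identity $\#X_{\aunder,\bunder,m,t}(\F_q) = \sum_{d \mid m_q} Q'_{d,t}(\F_q)$ and using multiplicativity of $\exp$ over the sum.
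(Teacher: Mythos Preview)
Your approach is essentially identical to the paper's: write $\#X(\F_q)=\sum_{d\mid m_q}Q'_{d,t}(\F_q)$ via \eqref{get it down to P'd}, interchange the sums over $d$ and $r$ using the order $f_d$ of $p$ modulo $d$, and then feed in \Cref{hypergeometric main} case by case to recognize the factors $L_p(H(\aunder/d,\bunder/d,t),\Q(\zeta_d),T)$ and $Z_p((\Gm)^{e-1},\Q(\zeta_d),T,\psi)^{(-1)^{n-e}}$. Your extra care about the coset bookkeeping $(\Z/d\Z)^\times/\langle p\rangle$ and the identification $\#I_d=n-e$ is exactly what the paper's terse ``recognize the sum over Galois conjugates'' is gesturing at. The one wobble is your aside that the minus sign on $Q'_{d,t}$ in case~(a) is reconciled ``because the hypergeometric piece has odd cohomological weight'': that is not the operative mechanism here---the sign is absorbed purely by the conventions in the definition of $L_q(H,T)$ and $L_p(H,\Q(\zeta_d),T)$ in \eqref{Lpyup}, so you should just match exponents directly rather than invoke weights.
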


\begin{proof}
Let $q=p^r$ with $r \geq 1$.  We get organized by \eqref{eqn:xunder} and note that after that we need to substitute $m_q=\gcd(m,\qq)$ for $m$.  Then plug in \eqref{get it down to P'd} to get
\begin{equation} 
\#X_{\aunder,\bunder,m,t}(\F_q) = \sum_{d \mid m_q} \sum_{I \supseteq I_d}  P'_{\aunder,\bunder,d,I,t}(\F_q) = \sum_{d \mid m_q} Q'_{d,t}(\F_q),
\end{equation}
where the latter suppresses notation, staging for \Cref{hypergeometric main}.  Thus
\begin{equation} \label{eqn:expzp}
\begin{aligned}
\log Z_p(X,T) &= \sum_{r=1}^{\infty} \#X(\F_{p^r})\frac{T^r}{r} 
=\sum_{r=1}^{\infty} \biggl(\sum_{d \mid \gcd(m,p^r-1)} Q_{d,t}'(\F_{p^r}) \biggr)\frac{T^r}{r} \\
&=\sum_{d \mid m} \sum_{\substack{r \\ d \mid (p^r-1)}} Q_{d,t}'(\F_{p^r}) \frac{T^r}{r}.
\end{aligned}
\end{equation}
Now let $f_d$ be the order of $p$ modulo $d$, and let $q_d \colonequals p^{f_d}$; then $d \mid (p^r-1)$ if and only if $f_d \mid r$.  Exponentiating \eqref{eqn:expzp} and substituting then gives
\begin{equation}
\begin{aligned}
Z_p(X,T) &=\prod_{d \mid m} \exp\biggl( \sum_{r = 1}^{\infty} Q_{d,t}'(\F_{q_d^r}) \frac{(T^{f_d})^r}{r} \biggr).
\end{aligned}
\end{equation}
To finish, we plug in \Cref{hypergeometric main} in three cases.  In the first case, we recognize the sum over Galois conjugates in part (a) from \eqref{Lpyup}.  In the second case, we apply part (b) having defined the Hecke Grossencharacter in \eqref{eqn:explicithypergross}, again with its Galois conjugates.  The third case gives no contribution.
\end{proof}

\begin{proof}[Proof of \textup{\Cref{thm:mainthm}}]
For the partial compactification, see \Cref{prop:openimmers}; then in each term of the product in \eqref{eqn:zetasP} apply \Cref{thm:mainthm-insitu}.
\end{proof}

\section{Examples}

To conclude, we give a few examples.  Our computations are performed in \textsf{Magma} \cite{Magma}.

\subsection{Curve}

Consider the case $m = 4$ with $\pmb{a} = (1,0)$ and $\pmb{b} = (3,2)$. Then $Y$ is defined by 
$$
y^4 = (-x_1)(1-x_1)^2(1-x_2)^2; \qquad tx_1x_2 =1
$$
where $x_1, x_2 \ne 0,1$. Our partial compactification $X \subseteq (\Gm)^2 \times \A^3$ is defined by the equations
\[ tx_1x_2 =y_1=1, \qquad x_1,x_2 \neq 0, \] 
together with
\begin{align*}
    y_4^4 = (-x_1)(1-x_1)^2(1-x_2)^2; \quad y_4^2 = y_2(1-x_1)(1-x_2); \quad y_2^2 = (-x_1).
\end{align*}
\Cref{thm:mainthm} gives the zeta function of $X$ as the product over $d=1,2,4$.  We compute that $I_4 = \{1,2\}$, $I_2 = \emptyset$, and $I_1 = \emptyset$.  The first case has nondegenerate parameters; the second degenerate but not isotypic parameters $(1/2,0),(1/2,0)$; the third degenerate and isotypic parameters, and trivial Hecke Grossencharacter.  Thus 
\begin{equation} 
\zeta_S(X,s) = L(H(1/4,0;1/2,1/2),\Q(i),s) \zeta_S(\Gm,s). 
\end{equation}

Running through the proof a bit, with notation as in \Cref{hypergeometric main}, we have that
\begin{equation}
\#X_{\aunder,\bunder,m,t}(\F_q) =  \sum_{d \mid m} Q'_{\aunder,\bunder,d,I,t}
\end{equation}
where
\[     Q'_4 = -H(\tfrac14, 0; \tfrac12, \tfrac12;t) - H(\tfrac34, 0; \tfrac12, \tfrac12;t) \]
and $Q'_2=0$ and $Q'_1=\qq$, using each case exactly once.  We then obtain that 
$$
\#X_{\aunder,\bunder,m,t}(\F_q) = \begin{cases}\qq -H(\tfrac14, 0; \tfrac12, \tfrac12;t) - H(\tfrac34, 0;\tfrac12, \tfrac12;t), &\text{ if $q \equiv 1 \psmod 4$};\\
\qq, & \text{ otherwise}.\end{cases} 
$$

Through the zigzag procedure, one can see that the Hodge vectors for the two hypergeometric series are $(1,1)+(1,1)=(2,2)$.  


\subsection{Surface}
Consider the case $m=12$ with $\aunder=(1,3,6)$ and $\bunder-\aunder=(2,4,12)$ so $\bunder=(3,7,18)$.  Then $Y$ is defined by
\begin{align*}
y^{12} &= (-x_1)(1-x_1)^2 (-x_2)^3 (1-x_2)^4 (-x_3)^6 (1-x_3)^{12} \\
&= x_1(1-x_1)^2 x_2^3 (1-x_2)^4 x_3^6 (1-x_3)^{12}
\end{align*}
together with $tx_1 \cdots x_d=1$ and $x_1,x_2,\dots,x_d \neq 0,1$. 

Our partial compactification $X\subseteq (\Gm)^3 \times \A^6$ is defined by the equations 
\[ tx_1x_2 x_3=y_1=1, \qquad  x_1,x_2,x_3 \neq 0, \] 
together with
\begin{equation}\begin{aligned}\label{eqn:exmthreefold}
   y_{12}^{12} &=  (-x_1)(1-x_1)^2 (-x_2)^3 (1-x_2)^4 (-x_3)^6, 
   & y_{12}^{6} &= y_2  (1-x_1) (1-x_2)^2 (-x_3)^3,\\
   y_{12}^{4} &= y_3  (-x_2)  (-x_3)^2, 
   & y_{12}^{3} &= y_4  (1-x_2),\\
   y_{12}^{2} &= y_6 (-x_3),
   &y_{6}^{6} &=  (-x_1)(1-x_1)^2 (-x_2)^3(1-x_2)^4, \\
   y_{6}^{3} &= y_2  (1-x_1)(1-x_2)^2, 
   &y_{6}^{2} &= y_3  (-x_2), \\
   y_4^4 &= (-x_1)(1-x_1)^2(-x_2)^3(-x_3)^6,
   &y_4^2 &= y_2(1-x_1)(-x_3)^3,\\
   y_3^3 &=  (-x_1)(1-x_1)^2(1-x_2)^4,
   &y_2^2 &=  (-x_1)(-x_2)^3
\end{aligned}   
\end{equation}
When $x_1 =1$, the equations~\eqref{eqn:exmthreefold} above reduce to
$$
    y_2^2 = (-x_2)^3; \qquad y_3 = y_4 = y_6 = y_{12} = 0.
$$
When $x_2,x_3 \neq 1$, this is $Y_{(3,6),(7,18),2}\simeq Y_{(1,0),(1,0),2}$.  When $x_2 = 1$ and $x_3 \neq 1$, we get $y_2^2=-1$ in the variables $y_2,x_3$. In this case, we can see that the sign is twisted and see that this is isomorphic to $Y_{(0),(0),2}'$. If instead $x_3=1$, we get $Y_{(1),(1),2}$.

Next, examine the case where $x_2 =1$ and $x_1 \ne 1$. The equations~\eqref{eqn:exmthreefold} reduce to 
\begin{align*}
y_{12} &= y_6=y_3 = 0, &y_4^4 &= (-x_1)(1-x_1)^2(-x_3)^6,\\
y_4^2&=y_2(1-x_1)(-x_3)^3 &y_2^2&=(-x_1)
\end{align*}
Using that $x_3 \ne 0$, one can solve for and eliminate $y_2$, hence we obtain that this stratum is isomorphic to $Y_{(1,6),(3,6); 4} \simeq Y_{(1,2),(3,2);4}$ when $x_3 \ne 1$. If $x_3=1$, then it is isomorphic to $Y_{(1), (3); 4}$.

Lastly, when $x_3=1$ and $x_1, x_2 \ne 1$, we can solve for and eliminate $y_2, y_3, y_4, y_6$.  Substitute $x_3=1$ into the first equation in~\eqref{eqn:exmthreefold}, to see this stratum is isomorphic to $Y_{(1,3), (3, 7), 12}$.

We compute that $I_{12} =I_6=I_3 =\{1,2\}$, $I_4 = \{1\}$, and $I_2=I_1 = \emptyset$. By \Cref{hypergeometric main}, we compute $Q'_{\aunder,\bunder,d,I,t}$ for all $d \mid 12$ to be
\begin{align*}
      Q'_{\aunder,\bunder,12,I,t} &= \sum_{k \in (\Z/12\Z)^\times} \omega^{-k/2}(t)\psi_{(5k/12,3k/4),(k/6,k/3)}; & Q'_{\aunder,\bunder,4,I,t} &= 0; \\
    Q'_{\aunder,\bunder,6,I,t} &= \sum_{k \in (\Z/6\Z)^\times} \psi_{(k/6,k/2),(k/3,2k/3)}; & Q'_{\aunder,\bunder,2,I,t} &=0; \\
    Q'_{\aunder,\bunder,3,I,t} &= \sum_{k \in (\Z/3\Z)^\times} -\psi_{0,k/3} \omega^{2k/3}(-1) = -2;  &  Q'_{\aunder,\bunder,1,I,t} &= (\qq)^2.
\end{align*}



In this case, there is no hypergeometric motive arising at all!  For $d=12$, we get the Hodge numbers $(1,2,1)$ over $\Q(\zeta_{12})$ (so totalling $(4,8,4)$); for $d=6$, we get $(1,0,1)$ over $\Q(\zeta_6)$ (so totalling $(2,0,2)$). Lastly, for $d=1,3$, we get a contribution in the second cohomology of type $(0,2,0)$. In sum, we get a middle Hodge structure of type $(6,10,6)$.

\end{document}